\newcommand{ \Z } [0] { \mathbf{Z} }
\newcommand{ \Q } [0] { \mathbf{Q} }
\newcommand{ \R } [0] { \mathbf{R} }
\newcommand{ \C } [0] { \mathbf{C} }
\newcommand{ \defeq } [0] {\stackrel{\textup{\tiny{def}}}{=}}
\newcommand{ \directedisom } [0] {\stackrel{\sim}{\longrightarrow}}
\DeclareMathOperator{\gal}{Gal}
\newcommand{\Qbar} [0] { \overline{\mathbf{Q}} }
\newcommand{\absgalgroup} [0] { \gal ( \Qbar / \Q ) }
\newcommand{ \sphere } [0] { \mathbb{S} }
\newcommand{ \torus } [0] { \mathbb{T} }
\newcommand{ \numcycles } [0] {\mathcal{O}}
\newcommand{ \edges } [0] { \mathcal{E} }
\newcommand{ \vertices } [0] { \mathcal{V} }
\newcommand{ \black } [0] { \bullet }
\newcommand{ \white } [0] { \circ }
\newcommand{ \perm } [0] { \sigma }
\newcommand{ \blackperm } [0] { \perm_{\black} }
\newcommand{ \whiteperm } [0] { \perm_{\white} }
\newcommand{ \opsymbol } [0] {\curlyvee}
\newcommand{ \Belyi } [0] {Bely\u{\i} }
\newcommand{ \twiddle }[0]{\textasciitilde}
\newcommand{ \caret }[0]{\textasciicircum}
\newcommand{ \codefont } [1] {\texttt{#1}}
\newcommand{ \codebox } [1] {\framebox{\parbox[]{\textwidth}{\codefont{\begin{flushleft}#1\end{flushleft}}}}}
\newcommand{\comment}[1]{}
\newcommand{ \Assertion } [0] {\emph{Assertion:} }
\newcommand{ \Assertions } [0] {\emph{Assertions:} }
\newcommand{ \proofbox }[1]{\framebox{\footnotesize{#1}}}
\theoremstyle{plain}
\newtheorem{lem}{Lemma}[section]
\newtheorem{prp}[lem]{Proposition}
\newtheorem{thm}[lem]{Theorem}
\newtheorem*{thmnonum}{Theorem}
\newtheorem{cor}[lem]{Corollary}
\newtheorem{orbittransfer}[lem]{Orbit Transfer Lemma}
\newtheorem{genustheorem}[lem]{Reroute Theorem}
\newtheorem{transitivitytheorem}[lem]{Transitivity Theorem}
\newtheorem*{introdfn}{Definition}
\newtheorem*{introgenustheorem}{Reroute Theorem}
\newtheorem*{introtransitivitytheorem}{Transitivity Theorem}
\newtheorem*{introgenusclassification}{Genus Theorem}
\newtheorem*{introcor}{Corollary}
\newtheorem*{introprp}{Proposition}
\theoremstyle{definition}
\newtheorem{dfn}[lem]{Definition}
\theoremstyle{remark}
\newtheorem*{rmk}{Remark}
\newtheorem*{deletionquestionI}{Deletion Question 1}
\newtheorem*{deletionquestionII}{Deletion Question 2}
\newtheorem{exm}[lem]{Example}
\begin{document}

\title[Conjugation of Transitive Permutation Pairs and Dessins d'Enfants]{Conjugation of Transitive Permutation Pairs \\and Dessins d'Enfants}


\author{Sean Rostami}

\address{
\begin{flushleft}
Syracuse University\\
Mathematics Department\\
Carnegie Building 215\\
Syracuse, NY 13244
\end{flushleft}
}

\email{sjrostam@syr.edu}
\email{sean.rostami@gmail.com}

\subjclass[2010]{11G32, 14H57, 57M07, 05C76, 05C10}

\keywords{dessin d'enfant, non-simultaneous conjugation, transitive subgroup, graph surgery}

\begin{abstract}
Let $ E $ be a finite set and $ S_E $ its symmetric group. Given $ \perm, \perm^{\prime} \in S_E $ that together generate a transitive subgroup, for which $ s \in S_E $ is it true that $ s \perm s^{-1}, \perm^{\prime} $ also generate a transitive subgroup? Such transitive permutation pairs encode \emph{dessin d'enfants}, important graph-theoretic objects which are also known to have great arithmetic significance. The absolute Galois group $ \absgalgroup $ acts on dessins d'enfants and permutes them in a very mysterious way. Two dessins d'enfants that share certain elementary combinatorial features are related by conjugations as above, and dessins d'enfants in the same $ \absgalgroup $-orbit share these features and more, so it seems worthwhile to have a good answer to the above question. I classify, relative to $ \perm, \perm^{\prime} $, exactly those transpositions $ s $ for which the new pair is guaranteed to be transitive. I also provide examples of the ``exceptional'' $ s $ which show the range of possible behavior and prove that the above question for the exceptional cases is equivalent to a natural question about deletion in graphs that may have a good answer in this more structured world of topological graphs. Finally, I classify transpositions $ s $ according to how they change the genus of the surface underlying the dessin d'enfant of $ \perm, \perm^{\prime} $. Some of the tools, like the Reroute Operation/Theorem, may have use beyond Dessins d'Enfants.
\end{abstract}

\maketitle

\tableofcontents

\section*{Introduction}

The field of Rational Numbers is denoted $ \Q $ and the field of Complex Numbers is denoted $ \C $. The algebraic closure of $ \Q $ in $ \C $ is denoted $ \Qbar $. The compactification of $ \C $, the Riemann Sphere, is denoted $ \widehat{\C} $. The (topological) sphere is denoted $ \sphere $ and the (topological) torus $ \torus $. For $ E $ a finite set, $ \vert E \vert $ denotes its cardinality and $ S_E $ denotes its symmetric group. For $ \perm \in S_E $, an $ \perm $-orbit is an orbit under the action on $ E $ by the cyclic subgroup generated by $ \perm $, and $ \numcycles ( \perm ) $ denotes the total number of $ \perm $-orbits. For $ \perm \in S_E $, a $ \perm $-cycle is simply a cycle in the disjoint cycle decomposition of $ \perm $. For $ \perm_1, \perm_2 \in S_E $, the pair $ ( \perm_1, \perm_2 ) $ is transitive iff the subgroup of $ S_E $ generated by $ \perm_1, \perm_2 $ is transitive, i.e. iff for any $ a, b \in E $ there is a word $ w $ in $ \perm_1, \perm_2 $ such that $ w ( a ) = b $. For $ \perm, s \in S_E $, abbreviate $ \perm^s \defeq s \cdot \perm \cdot s^{-1} $.

\subsection*{Motivation}

The key reason why transitive permutation pairs are important is that they encode, in a way that facilitates proof and computation, dessins d'enfants, which are important objects in both Number Theory and Topological Graph Theory. Related concepts are cellular embeddings, maps, rotation systems, ribbon graphs, etc.

A \emph{dessin d'enfant} is a triple $ ( D, X, \iota ) $ consisting of a finite bicolored\footnote{``Bicolored'' means that one of two colors is assigned to each vertex and every edge is incident to one vertex of each color. This differs from the notion of ``bipartite'' only in that a choice of color for each vertex is fixed.} graph $ D $, a connected oriented compact surface $ X $ without boundary, and an embedding $ \iota : D \hookrightarrow X $ such that the complement $ X \setminus \iota ( D ) $ is homeomorphic to a union of open discs. For the last requirement, it is necessary but not sufficient that $ D $ is a connected graph. The symbols $ \white $ and $ \black $ will be used to indicate the colors of the vertices, and a vertex will be referred to as either a $ \white $-vertex or a $ \black $-vertex accordingly. Usually the embedding $ \iota $ will be omitted from the notation.

These objects were known, in various slightly different forms, for quite a long time. A newer reason to consider such an object can be found in the following theorem of \Belyi \cite{belyi} and others: \emph{A compact Riemann surface $ S $ is defined over $ \Qbar $ if and only if there is a holomorphic function $ f : S \rightarrow \widehat{\C} $ ramified over at most three values.} If $ ( S, f ) $ is such a \Belyi pair, with the ramification values normalized to be $ 0, 1, \infty \in \widehat{\C} $, then a dessin d'enfant $ ( D, X ) $ is obtained as follows: define $ X $ to be the mere topological surface of $ S $ and define $ D $ to be the preimage in $ X $ under $ f $ of $ [ 0, 1 ] $, where preimages of $ 0 $ are colored by $ \white $, preimages of $ 1 $ are colored by $ \black $, and preimages of $ ( 0, 1 ) $ are edges. Conversely, given a dessin d'enfant, one may construct a \Belyi pair (cf. \S4.2 of \cite{GG}). These two constructions yield, modulo certain natural equivalence relations on each side, a bijection. The number-theoretic significance of this is due to an observation by Grothendieck \cite{sketch}: \emph{The absolute Galois group $ \absgalgroup $ permutes, very mysteriously, the set of dessins d'enfants $ ( D, X ) $ via its permutation of the equivalent objects $ ( S, f ) $.} It is perhaps worth emphasizing that a dessin d'enfant is, superficially, a purely topological object but endows its surface, in particular, with a complex structure. To appreciate this, consider the diversity of complex structures, the elliptic curves, on $ \torus $.

I now describe the role played by permutations. Suppose $ ( D, X ) $ is a dessin d'enfant, and denote by $ \edges $ the set of edges of the graph $ D $. The orientation of $ X $ cyclically orders the edges incident to each $ \white $-vertex, which defines a disjoint cycle decomposition, i.e. a permutation $ \whiteperm \in S_{\edges} $. Similarly, $ ( D, X ) $ defines a permutation $ \blackperm \in S_{\edges} $. The pair $ ( \whiteperm, \blackperm ) $ is called the \emph{monodromy pair} of $ ( D, X ) $, and connectedness of $ D $ implies that the pair is transitive. It is perhaps surprising that, conversely, if $ ( \whiteperm, \blackperm ) $ is a transitive pair then a dessin d'enfant $ ( D, X ) $ can be constructed whose monodromy pair is $ ( \whiteperm, \blackperm ) $. Equivalence classes of these transitive pairs are also in bijection with equivalence classes of dessins d'enfants. The equivalence relation for permutation pairs, simultaneous conjugation, is easy to describe and will be immediately important: $ ( \whiteperm, \blackperm ) $ is equivalent to $ ( \whiteperm^{\prime}, \blackperm^{\prime} ) $ if and only if there is $ s \in S_{\edges} $ such that $ \whiteperm^{\prime} = \whiteperm^s $ and $ \blackperm^{\prime} = \blackperm^s $. Thus, the theory of Dessins d'Enfants is equivalent to the theory of transitive permutation pairs.

The operation of ``non-simultaneous'' conjugation, i.e. the maps $ ( \whiteperm, \blackperm ) \mapsto ( \whiteperm^s, \blackperm ) $ for various $ s \in S_{\edges} $ seems to be important. One reason is that if two dessins d'enfants share certain graph-theoretic data then their monodromy pairs are related by such a conjugation. One of the most elementary facts about the $ \absgalgroup $-action on dessins d'enfants is that two dessins d'enfants in the same $ \absgalgroup $-orbit are indeed related in this way. Another equally elementary fact is that the surfaces of two dessins d'enfants in the same $ \absgalgroup $-orbit have the same genus. So, it seems important ultimately to understand the subset\footnote{Any such subset is a union of double cosets: For any $ C \in C ( \blackperm ) \backslash S_{\edges} / C ( \whiteperm ) $, where $ C ( \perm ) $ is the centralizer of $ \perm $, the effect of $ ( \whiteperm, \blackperm ) \mapsto ( \whiteperm^s, \blackperm ) $ is the same for all $ s \in C $, since pairs are considered modulo simultaneous conjugation.} of $ s \in S_{\edges} $ for which $ ( \whiteperm^s, \blackperm ) $ is again transitive and to understand which among those preserve genus. Such conjugations can also be seen in the action by $ \absgalgroup $ on the \emph{Grothendieck-Teichm\"{u}ller Group}, cf. Proposition 1.6 in \cite{ihara}; it is possible to make precise the connection between these conjugations of the Grothendieck-Teichm\"{u}ller Group and conjugations of permutations pairs.

\subsection*{Results}

\begin{center}
\emph{For locations of anything mentioned here, see the next subsection \emph{Outline}.}
\end{center}

For $ ( D, X ) $ a dessin d'enfant with edges $ \edges $ and monodromy pair $ ( \whiteperm, \blackperm ) $, it is well-known that the connected components of $ X \setminus D $, called \emph{faces}, are in natural correspondence with $ \whiteperm \blackperm $-orbits. Since the $ \white $-vertices and $ \black $-vertices correspond to $ \whiteperm $-orbits and $ \blackperm $-orbits, the Euler Characteristic $ \chi_X $ of $ X $ can be computed directly: $ \chi_X = \numcycles ( \whiteperm ) + \numcycles ( \blackperm ) - \vert \edges \vert + \numcycles ( \whiteperm \blackperm ) $. It is very useful to generalize this, in the most direct way possible, to all pairs. Let $ E $ be a finite set and $ S_E $ its symmetric group. For arbitrary $ \whiteperm, \blackperm \in S_E $, define the \emph{synthetic Euler characteristic} of $ ( \whiteperm, \blackperm ) $ to be 
\begin{equation*}
\chi_{( \whiteperm, \blackperm )} \defeq \numcycles ( \whiteperm ) + \numcycles ( \blackperm ) - \vert E \vert + \numcycles ( \whiteperm \blackperm )
\end{equation*}

Inspired by the well-known formula ``$ \chi = 2 - 2 g $'', define the \emph{synthetic genus} of $ ( \whiteperm, \blackperm ) $ to be 
\begin{equation*}
g_{( \whiteperm, \blackperm )} \defeq 1 - \chi ( \whiteperm, \blackperm ) / 2
\end{equation*}

There is an operation $ \opsymbol $ that seems to be well-adapted to the question, which is also a variation on and generalization of the operation of ``edge sliding'' from Topological Graph Theory, cf. \S3.3.3 of \cite{GT}.

\begin{introdfn}
For each distinct pair $ a, b \in E $, there is a \emph{reroute} operation $ \opsymbol $ on $ S_E \times S_E $. The idea of $ \opsymbol $ is to ``unplug'' edge $ a $ from its $ \white $-vertex and reconnect it to the $ \white $-vertex of $ b $. Any non-simultaneous conjugation $ ( \whiteperm, \blackperm ) \mapsto ( \whiteperm^s, \blackperm ) $ is essentially achieved by repeated application of $ \opsymbol $ relative to various edges which are easy to read from $ s $.
\end{introdfn}

The following Definition and Theorem, and its proof, are the foundation of the main conclusions.

\begin{introdfn}
Relative to $ ( a, b ) $, call a pair $ ( \whiteperm, \blackperm ) $ in $ S_E $ 
\begin{itemize}
\setlength{\itemsep}{5pt}
\item[-] \emph{Type U (Unoriented)} iff $ a, \whiteperm ( a ), b $ are distinct and no two of them represent the same $ \whiteperm \blackperm $-orbit.

\item[-] \emph{Type N (Negatively Oriented)} iff $ a, \whiteperm ( a ), b $ represent the same $ \whiteperm \blackperm $-orbit and the cycle containing them is of the form\footnote{Here, $ \whiteperm ( a ) \neq b $ is required but $ \whiteperm ( a ) = a $ is allowed. The right way to talk about this, valuable elsewhere, is via the notion of \emph{arc}, cf. Definition \ref{DFNarcs}. But this omitted from the Introduction.} $ ( \ldots a \ldots b \ldots \whiteperm ( a ) \ldots ) $.

\item[-] \emph{Type P (Positively Oriented)} iff it is neither\footnote{``Type P'' can be described directly, cf. Definition \ref{DFNsubtypes}, but this is omitted from the Introduction.} Type U nor Type N.
\end{itemize}
\end{introdfn}

\begin{introgenustheorem}
Let $ ( \whiteperm, \blackperm ) $ be an arbitrary pair in $ S_E $, not necessarily transitive. Let $ ( \whiteperm^{\opsymbol}, \blackperm^{\opsymbol} ) $ be the reroute of $ ( \whiteperm, \blackperm ) $ relative to $ ( a, b ) $. Let $ g $ be the synthetic genus of $ ( \whiteperm, \blackperm ) $ and $ g^{\opsymbol} $ that of $ ( \whiteperm^{\opsymbol}, \blackperm^{\opsymbol} ) $.

\begin{enumerate}
\setlength{\itemsep}{5pt}
\item If $ ( \whiteperm, \blackperm ) $ is Type U relative to $ ( a, b ) $ then $ g^{\opsymbol} = g + 1 $.

\item If $ ( \whiteperm, \blackperm ) $ is Type N relative to $ ( a, b ) $ then $ g^{\opsymbol} = g - 1 $.

\item If $ ( \whiteperm, \blackperm ) $ is Type P relative to $ ( a, b ) $ then $ g^{\opsymbol} = g $.
\end{enumerate}
\end{introgenustheorem}

By studying what happens after repeated application of the reroute operation $ \opsymbol $, one can conclude the following answer to the original question:

\begin{introdfn}
Relative to $ ( a, b ) $, call a pair $ ( \whiteperm, \blackperm ) $ in $ S_E $ \emph{Exceptional} iff $ a, \whiteperm ( a ), b, \whiteperm ( b ) $ are distributed into $ \whiteperm \blackperm $-cycles in one of the following ways:
\begin{itemize}
\setlength{\itemsep}{5pt}

\item[-]  $ ( \ldots \whiteperm ( a ) \ldots \whiteperm ( b ) \ldots b \ldots a \ldots ) $ with $ \whiteperm ( a ) \neq a $

\item[-] $ ( \ldots \whiteperm ( b ) \ldots \whiteperm ( a ) \ldots a \ldots b \ldots ) $ with $ \whiteperm ( b ) \neq b $

\item[-] $ ( \ldots a \ldots b \ldots ), ( \ldots \whiteperm ( a ) \ldots \whiteperm ( b ) \ldots ) $

\item[-] $ ( \ldots a \ldots \whiteperm ( a ) \ldots ), ( \ldots b \ldots \whiteperm ( b ) \ldots ) $
\end{itemize}
The last of these is called \emph{Wild Exceptional}, for reasons that are explained later.
\end{introdfn}

\begin{introtransitivitytheorem}
Let $ ( D, X ) $ be a dessin d'enfant with edges $ \edges $ and monodromy pair $ ( \whiteperm, \blackperm ) $. Fix distinct $ a, b \in \edges $ and let $ t \in S_{\edges} $ be the transposition exchanging $ a $ and $ b $.

\begin{enumerate}
\setlength{\itemsep}{5pt}

\item If $ ( \whiteperm, \blackperm ) $ is not Exceptional relative to $ ( a, b ) $ then $ ( \whiteperm^t, \blackperm ) $ is transitive.

\item If $ ( \whiteperm, \blackperm ) $ is Exceptional then examples show that $ ( \whiteperm^t, \blackperm ) $ may or may not be transitive, depending on the truth of certain connectivity properties of $ D $.
\end{enumerate}
\end{introtransitivitytheorem}

After using the Reroute Theorem to analyze the Exceptional cases, the Transitivity Theorem implies:

\begin{introcor}
Let everything be as in the Transitivity Theorem.

If $ \numcycles ( \whiteperm^t \blackperm ) < \numcycles ( \whiteperm \blackperm ) $ then $ ( \whiteperm^t, \blackperm ) $ is transitive.
\end{introcor}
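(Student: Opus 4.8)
The strategy is to combine the Transitivity Theorem with a direct computation of $\numcycles(\whiteperm^t\blackperm)$, reducing the claim to the case where $(\whiteperm,\blackperm)$ is \emph{not} Exceptional. The computational input is the identity
\[
\whiteperm^t\blackperm \;=\; (a\,b)\,(\whiteperm(a)\,\whiteperm(b))\cdot\whiteperm\blackperm ,
\]
valid because $t$ is the transposition $(a\,b)$: writing $\whiteperm^t\blackperm = t\whiteperm t\blackperm$ and inserting $\whiteperm^{-1}\whiteperm$ gives $\whiteperm^t\blackperm = \bigl(t\whiteperm t\whiteperm^{-1}\bigr)\whiteperm\blackperm$, and $t\whiteperm t\whiteperm^{-1} = t\cdot(\whiteperm t\whiteperm^{-1}) = (a\,b)(\whiteperm(a)\,\whiteperm(b))$. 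So $\whiteperm^t\blackperm$ is obtained from the face permutation $\whiteperm\blackperm$ by left-multiplying first by $\tau_1 \defeq (\whiteperm(a)\,\whiteperm(b))$ and then by $\tau_2 \defeq (a\,b)$, both genuine transpositions since $a\neq b$ and $\whiteperm$ is injective.

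Next I would invoke the elementary fact that left-multiplying a permutation $\pi$ by a transposition $(x\,y)$ changes $\numcycles$ by $+1$ if $x$ and $y$ lie in a common $\pi$-cycle (the cycle splits) and by $-1$ otherwise (the two cycles merge). Applying this twice, $\numcycles(\whiteperm^t\blackperm)-\numcycles(\whiteperm\blackperm) = \varepsilon_1+\varepsilon_2$ with $\varepsilon_1,\varepsilon_2\in\{\pm1\}$, so the hypothesis $\numcycles(\whiteperm^t\blackperm)<\numcycles(\whiteperm\blackperm)$ is precisely the statement that $\varepsilon_1=\varepsilon_2=-1$, i.e. both multiplications merge cycles. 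Here $\varepsilon_1=-1$ means $\whiteperm(a)$ and $\whiteperm(b)$ lie in different $\whiteperm\blackperm$-cycles, and $\varepsilon_2=-1$ means $a$ and $b$ lie in different cycles of $\tau_1\whiteperm\blackperm$.

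I would then check that no Exceptional pattern satisfies both conditions. In the first three patterns, $\whiteperm(a)$ and $\whiteperm(b)$ already share a $\whiteperm\blackperm$-cycle (in patterns one and two all of $a,\whiteperm(a),b,\whiteperm(b)$ lie in one cycle; in pattern three $\whiteperm(a),\whiteperm(b)$ lie in a common cycle), so $\varepsilon_1=+1$, a contradiction. In the Wild pattern, $a,\whiteperm(a)$ lie in one $\whiteperm\blackperm$-cycle $C_1$ and $b,\whiteperm(b)$ in a distinct cycle $C_2$, so $\varepsilon_1=-1$; but then $\tau_1$ merges $C_1$ with $C_2$ into a single cycle of $\tau_1\whiteperm\blackperm$ containing both $a$ and $b$, forcing $\varepsilon_2=+1$, again a contradiction. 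Hence the hypothesis forces $(\whiteperm,\blackperm)$ to be non-Exceptional relative to $(a,b)$, and part (1) of the Transitivity Theorem gives that $(\whiteperm^t,\blackperm)$ is transitive.

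There is essentially no hard step once the identity for $\whiteperm^t\blackperm$ is in hand; the only care needed is the last paragraph's case-by-case reading of the Exceptional patterns, in particular tracking which elements land in which cycle of $\tau_1\whiteperm\blackperm$ after the first merge in the Wild case. One should also note the degenerate subcase $\{a,b\}=\{\whiteperm(a),\whiteperm(b)\}$, in which $\tau_1=\tau_2$ and $\whiteperm^t\blackperm=\whiteperm\blackperm$, so $\numcycles$ is unchanged and the hypothesis simply fails; this causes no trouble.
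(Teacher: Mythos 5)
Your proof is correct, and it reaches the paper's conclusion by the same skeleton (show the hypothesis forces the pair to be non-Exceptional, then quote part (1) of the Transitivity Theorem) but with a genuinely different engine for the key intermediate step. The paper's own proof is a one-liner: the hypothesis says the synthetic genus of $(\whiteperm^t,\blackperm)$ is strictly larger than that of $(\whiteperm,\blackperm)$, and non-Exceptionality is then read off from bookkeeping already done with the Reroute Theorem and the branching analysis of \S\ref{Siteration} -- namely that Tame Exceptional conjugations satisfy $g^{\opsymbol\opsymbol}=g-1$ and Wild Exceptional ones preserve synthetic genus, so no Exceptional conjugation can raise it. You instead re-derive the needed fact from scratch: the factorization $\whiteperm^t\blackperm=(a\,b)(\whiteperm(a)\,\whiteperm(b))\,\whiteperm\blackperm$ together with the merge/split rule for left multiplication by a transposition shows that $\numcycles(\whiteperm^t\blackperm)-\numcycles(\whiteperm\blackperm)\in\{-2,0,2\}$ and that the value $-2$ requires $\whiteperm(a),\whiteperm(b)$ to lie in distinct $\whiteperm\blackperm$-cycles and $a,b$ to lie in distinct cycles after the first merge; your case check (including the Wild case, where the first merge puts $a$ and $b$ into a common cycle, and the degenerate subcase $\{a,b\}=\{\whiteperm(a),\whiteperm(b)\}$) correctly rules out all four Exceptional patterns. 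What your route buys is independence from the Reroute machinery for this step -- it is an elementary, self-contained verification that Exceptional transpositions never decrease $\numcycles(\whiteperm\blackperm)$, and as a by-product it reproves that a transposition changes the synthetic genus by at most $1$; what the paper's route buys is brevity, since that genus information was already established and the corollary then follows with no new computation.
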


By imposing additional hypotheses, some converses are gained. For example:

\begin{introcor}
Let everything be as in the Transitivity Theorem, but assume that $ X = \sphere $.

If $ ( \whiteperm, \blackperm ) $ is not Wild Exceptional relative to $ ( a, b ) $ then: $ ( \whiteperm^t, \blackperm ) $ is transitive if and only if $ \numcycles ( \whiteperm^t \blackperm ) \leq \numcycles ( \whiteperm \blackperm ) $.
\end{introcor}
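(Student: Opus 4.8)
The plan is to recast both directions in terms of the synthetic genus. Two facts do the work. First, conjugation preserves cycle type, so $\numcycles(\whiteperm^t)=\numcycles(\whiteperm)$; since $\blackperm$ and $\edges$ are untouched, this gives $\chi_{(\whiteperm^t,\blackperm)}-\chi_{(\whiteperm,\blackperm)}=\numcycles(\whiteperm^t\blackperm)-\numcycles(\whiteperm\blackperm)$, hence, because $X=\sphere$ forces $g_{(\whiteperm,\blackperm)}=0$,
\[
g_{(\whiteperm^t,\blackperm)}\;=\;\tfrac{1}{2}\bigl(\numcycles(\whiteperm\blackperm)-\numcycles(\whiteperm^t\blackperm)\bigr).
\]
Second, a transitive pair is the monodromy pair of a dessin on a genuine compact connected oriented surface, so its synthetic genus equals that surface's genus by construction, in particular a nonnegative integer.

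The ``only if'' direction is then immediate: if $(\whiteperm^t,\blackperm)$ is transitive then the left side above is $\geq 0$, so $\numcycles(\whiteperm^t\blackperm)\leq\numcycles(\whiteperm\blackperm)$. (This half uses neither the non-Wild hypothesis nor anything about $(\whiteperm,\blackperm)$ beyond $g_{(\whiteperm,\blackperm)}=0$.) For ``if'', suppose $\numcycles(\whiteperm^t\blackperm)\leq\numcycles(\whiteperm\blackperm)$. If the inequality is strict, the first Corollary already delivers transitivity. If $(\whiteperm,\blackperm)$ is not Exceptional relative to $(a,b)$, the Transitivity Theorem delivers it. So the entire remaining content is the case in which $(\whiteperm,\blackperm)$ is Exceptional but \emph{not} Wild Exceptional relative to $(a,b)$, $X=\sphere$, and $\numcycles(\whiteperm^t\blackperm)=\numcycles(\whiteperm\blackperm)$ --- equivalently $g_{(\whiteperm^t,\blackperm)}=0$ --- and we must deduce that $(\whiteperm^t,\blackperm)$ is transitive.

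For this residual case I would argue by contradiction using the reroute decomposition of $t$. First, if $(\whiteperm^t,\blackperm)$ had $k\geq 2$ orbits, then $a$ and $b$ must lie in \emph{distinct} orbits: otherwise $t$ stabilizes the common orbit $O$, hence so does $\whiteperm=t\whiteperm^t t$, and since $\blackperm$ stabilizes every orbit of $\langle\whiteperm^t,\blackperm\rangle$, the group $\langle\whiteperm,\blackperm\rangle$ would preserve the proper nonempty set $O$, contradicting transitivity of $(\whiteperm,\blackperm)$. Next, write $t$-conjugation as a short chain of reroute operations relative to pairs read off from $a,b,\whiteperm$, and use the Reroute Theorem to track the synthetic genus step by step: since the net shift is $0$ and we start at genus $0$, the steps are either all Type P or pair up into one Type U and one Type N, and for each of the three non-Wild Exceptional patterns for $a,\whiteperm(a),b,\whiteperm(b)$ among the $\whiteperm\blackperm$-cycles one reads off which Type occurs and how the $\whiteperm\blackperm$-cycles become $\whiteperm^t\blackperm$-cycles; the claim is that the resulting $\whiteperm^t\blackperm$-cycle structure is incompatible with a split into $k\geq 2$ pieces, forcing $k=1$ (while the intermediate pair along the chain stays transitive and non-Exceptional, or has strictly fewer $\whiteperm'\blackperm$-cycles, so the Transitivity Theorem and the first Corollary keep the bookkeeping consistent). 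That pattern-by-pattern incompatibility check is the main obstacle --- it is elementary but requires careful case analysis matching the $\whiteperm\blackperm$-cycle picture against the hypothetical components of $\langle\whiteperm^t,\blackperm\rangle$ --- and it is exactly the point at which ``not Wild Exceptional'' is indispensable, since for Wild Exceptional pairs the analogous split is genuinely available, which is what clause (2) of the Transitivity Theorem records.
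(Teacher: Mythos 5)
Your reduction is fine as far as it goes: the genus formula, the ``only if'' direction, the strict-inequality case via Corollary \ref{CORhighergenusimpliestransitive}, and the non-Exceptional case via the Transitivity Theorem \ref{THMtransitivity} all match the paper. But the proof is not complete, and the gap is exactly where you stop: the residual case ($(\whiteperm,\blackperm)$ Tame Exceptional, $X=\sphere$, $\numcycles(\whiteperm^t\blackperm)=\numcycles(\whiteperm\blackperm)$) is left as an unverified ``claim'' requiring a pattern-by-pattern incompatibility check that you do not carry out. Worse, the check you propose is misdirected: you would be trying to rule out a disconnection of $(\whiteperm^t,\blackperm)$ by inspecting components, when the correct observation is that this residual case is \emph{vacuous}. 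The branching database of \S\ref{Siteration} (summarized just before the Transitivity Theorem) shows that every Tame Exceptional pattern (\ref{DFNtameexceptional1A}, \ref{DFNtameexceptional1B}, \ref{DFNtameexceptional2}) forces $g^{\opsymbol\opsymbol}=g-1$, i.e.\ by Proposition \ref{PRPconjugationviaoperations} the synthetic genus of $(\whiteperm^t,\blackperm)$ is $g-1$, equivalently $\numcycles(\whiteperm^t\blackperm)=\numcycles(\whiteperm\blackperm)+2$. So Tame Exceptional is simply incompatible with your assumed equality of cycle counts (indeed with $g^t\geq 0$ when $g=0$); there is nothing to check about orbits of $\langle\whiteperm^t,\blackperm\rangle$. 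Your own bookkeeping already signals this: you assert ``the net shift is $0$, so the steps are all Type P or one Type U and one Type N,'' but for a Tame Exceptional pair the two reroute steps are N then P2, or P1 then N, or P3 then N, each with net shift $-1$, contradicting the shift-$0$ assumption rather than setting up a component analysis.

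This is precisely how the paper closes the argument (Corollary \ref{CORspherecasetheorem}): from $\numcycles(\whiteperm^t\blackperm)\leq\numcycles(\whiteperm\blackperm)$ one gets $g^t\geq 0$, which excludes Tame Exceptional since Tame Exceptional would give $g^t=g-1<0$ on the sphere; combined with the hypothesis that the pair is not Wild Exceptional, it is not Exceptional at all, and the Transitivity Theorem applies. Replacing your sketched case analysis by this one-line exclusion completes your proof and makes the strict/equal case split unnecessary.
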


A strong statement can be given in the admittedly narrow class of trees:

\begin{introprp}
Let everything be as in the Transitivity Theorem, but assume $ D $ is a tree.

For all $ s \in S_{\edges} $, $ ( \whiteperm^s, \blackperm ) $ is transitive if and only if $ \numcycles ( \whiteperm^s \blackperm ) = 1 $.
\end{introprp}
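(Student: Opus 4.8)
The plan is to prove the two implications separately. Only the ``forward'' implication uses that $D$ is a tree, and neither implication needs the Reroute machinery.

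For ``$\numcycles(\whiteperm^s\blackperm) = 1 \Rightarrow (\whiteperm^s,\blackperm)$ transitive'' I would argue purely group-theoretically, with no hypothesis on $D$ at all. If $\numcycles(\whiteperm^s\blackperm) = 1$ then $\whiteperm^s\blackperm$ is a single $\vert\edges\vert$-cycle, so the cyclic subgroup it generates already acts transitively on $\edges$; since $\whiteperm^s\blackperm$ lies in the subgroup generated by $\whiteperm^s$ and $\blackperm$, that larger subgroup is transitive as well. This half is essentially a triviality and holds verbatim for any pair.

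For the reverse implication I would combine two observations. First, conjugation preserves cycle counts, so $\numcycles(\whiteperm^s) = \numcycles(\whiteperm)$. Second, because $D$ is a tree its vertex count is one more than its edge count, i.e. $\numcycles(\whiteperm) + \numcycles(\blackperm) = \vert\edges\vert + 1$. Now assume $(\whiteperm^s,\blackperm)$ is transitive. Then it is the monodromy pair of a dessin $(D',X')$ with $\vert\edges\vert$ edges and $\numcycles(\whiteperm^s) + \numcycles(\blackperm) = \vert\edges\vert + 1$ vertices; a connected graph on $V$ vertices has at least $V-1$ edges, so $D'$ attains this bound and is itself a tree. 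Reading the Euler characteristic off the cell structure, $\chi_{X'} = (\vert\edges\vert + 1) - \vert\edges\vert + \numcycles(\whiteperm^s\blackperm) = 1 + \numcycles(\whiteperm^s\blackperm)$, and since $\chi_{X'} \le 2$ (the genus is nonnegative) while $\numcycles(\whiteperm^s\blackperm) \ge 1$ always, we get $\numcycles(\whiteperm^s\blackperm) = 1$ (and incidentally $X' = \sphere$). Equivalently, and without naming $D'$: transitivity of $(\whiteperm^s,\blackperm)$ forces $\chi_{(\whiteperm^s,\blackperm)} \le 2$, and substituting $\numcycles(\whiteperm^s) + \numcycles(\blackperm) = \vert\edges\vert + 1$ into the definition of $\chi_{(\whiteperm^s,\blackperm)}$ yields $\numcycles(\whiteperm^s\blackperm) \le 1$ at once.

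I do not anticipate a real obstacle. The only non-formal input is the inequality ``$\chi \le 2$ for a transitive pair'' (equivalently, synthetic genus $\ge 0$), which is precisely the content of the dessin/pair correspondence recalled in the Results section; it is exactly the failure of this inequality for disconnected, i.e. intransitive, pairs — where $\chi$ can exceed $2$ — that makes the statement a genuine ``if and only if'' for trees but not for general $D$. One could instead obtain the forward implication by writing $s$ as a product of transpositions and tracking the synthetic genus through repeated reroutes (every reroute applied to a one-face configuration is Type N or Type P, by the Reroute Theorem), but this is longer and gains nothing over the one-line Euler-characteristic estimate, so I would not pursue it.
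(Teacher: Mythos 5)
Your proposal is correct. On the substantive implication (``$(\whiteperm^s,\blackperm)$ transitive $\Rightarrow \numcycles(\whiteperm^s\blackperm)=1$'') you and the paper (Proposition \ref{PRPtreecase}) argue in essentially the same way: the transitive pair corresponds to a dessin d'enfant with the same edge count and, since conjugation preserves $\numcycles(\whiteperm)$, the same vertex count, and nonnegativity of its genus caps the Euler characteristic at $2$; the paper encodes the tree hypothesis as $\numcycles(\whiteperm\blackperm)=1$ (one face on $\sphere$), you encode it as $\numcycles(\whiteperm)+\numcycles(\blackperm)=\vert\edges\vert+1$, and these are the same bookkeeping. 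Where you genuinely diverge is the converse. You note that $\numcycles(\whiteperm^s\blackperm)=1$ means $\whiteperm^s\blackperm$ has a single orbit, so the cyclic subgroup $\langle\whiteperm^s\blackperm\rangle\subset\langle\whiteperm^s,\blackperm\rangle$ is already transitive: a purely group-theoretic one-liner, valid for any pair with no tree hypothesis and no topology. The paper instead proves the contrapositive through its model machinery: if $(\whiteperm^s,\blackperm)$ is not transitive, a nondegenerate model is disconnected by Lemma \ref{LEMtransiffconn}, and Corollary \ref{CORcountingboundary} (which rests on the boundary-component Theorem \ref{THMcyclesandboundary} via \cite{HR}) forces $\numcycles(\whiteperm^s\blackperm)\geq 2$. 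The two are logically equivalent, but your route is strictly more elementary and bypasses the boundary machinery of \S\ref{Ssyntheticgenus} entirely; what the paper's route uses is the stronger quantitative fact that the number of $\whiteperm^s\blackperm$-orbits dominates the number of graph components, which it has on hand anyway, but for this statement your observation suffices. Your side remarks (that the resulting dessin is again a tree and lives on $\sphere$) are correct but not needed.
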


Finally, transpositions are classified according to how they change synthetic genus.

\begin{introgenusclassification}
Let $ ( \whiteperm, \blackperm ) $ be an arbitrary pair in $ S_E $, not necessarily transitive. Using descriptions similar in flavor to those of ``Type'' or ``Exceptional'', transpositions $ t \in S_E $ are classified according to whether $ ( \whiteperm^t, \blackperm ) $ has higher, lower, or equal synthetic genus than $ ( \whiteperm, \blackperm ) $. Regardless, the synthetic genus may change by at most $ 1 $.
\end{introgenusclassification}

Overall, the approach is somewhat messy and the reader may reasonably ask if there is a significantly more elegant approach. I think there is not, due to the specificity and opaqueness of (a) the notion of ``Type'', (b) the Exceptional cases, and (c) the classification according to synthetic genus.

In the future, I hope to understand \emph{general} conjugations as completely as those by transpositions. On the other hand, I think the facts here are sufficient to allow work on a genuinely Galois-theoretic question, restricted to the case of quadratic extensions.

\subsection*{Outline}

In \S\ref{Snotation}, I set some notation and recall a few standard facts from the subjects concerned. I also make explicit some conventions that may not be standard.

In \S\ref{Ssyntheticgenus}, I formalize some notions that are likely variations on things that are very well-known. One, a dessin d'famille, is a natural generalization of a dessin d'enfant which will be very useful (\ref{DFNdessindnichee}). Additionally, I provide a few basic tools to go with these notions, like their relationship to permutations (\ref{DFNmonodromyofnichees}, \ref{PRPconstructionofmodels}) and a similarly generalized notion of \emph{genus} (\ref{DFNsyntheticgenus}). As a bonus, a well-known fact about dessins d'enfants whose precise statement and proof does not seem to appear in the literature is generalized and proved (\ref{THMcyclesandboundary}).

In \S\ref{Sdeletion}, I give a nice description, in terms of the monodromy pair, of those edges of a dessin d'enfant which border only one face instead of two (\ref{LEMfaceincidence}). For a dessin d'enfant in $ \sphere $, it is equivalent to say that deletion of the edge results in a disconnected graph, but for dessins d'enfants in surfaces of higher genus, disconnection is merely a sufficient condition. I do not understand at this time how to characterize, in terms of the monodromy pair, those edges whose deletion results in a disconnected graph. An analogous question appears in \S\ref{SStameexceptional}, and a good answer to it would significantly improve the Transitivity Theorem.

In \S\ref{Sarcs}, which is very short, I define the slightly unusual concept of \emph{arc} (\ref{DFNarcs}). Given an element $ \perm $ of a group acting on a set, an arc is essentially a half-open interval in a $ \perm $-orbit, after arranging the orbit as a circuit with $ \perm ( x ) $ following $ x $ for every $ x $ in the orbit. Arcs are used many times in the rest of the paper.

In \S\ref{Sfundamentaloperation}, I define the reroute operations $ \opsymbol $ on $ S_E \times S_E $ (\ref{DFNfundamentaloperation}). Every choice of distinct $ a, b \in E $ yields a different operation, and for any $ s \in S_E $ it is possible to choose such pairs in $ E $ so that $ ( \whiteperm^s, \blackperm ) $ is the same as performing in succession the reroute operations relative to the chosen pairs (\ref{PRPconjugationviaoperations}). By using the concept of \emph{arc}, $ S_E \times S_E $ can be perfectly partitioned (\ref{DFNtypes}) so as to predict exactly how $ \opsymbol $ will change (\ref{THMgenus}) the synthetic genus of a pair.

In \S\ref{Siteration}, I use both the statements and the proofs from \S\ref{Sfundamentaloperation} to study the repeated application of the $ \opsymbol $ operation. The results are not conceptual, and are presented essentially as a database to be exploited heavily in \S\ref{Sclassificationbytransitivity} and \S\ref{Sclassificationbygenus}. The concept of \emph{arc} is again valuable here, allowing a very annoying amount of seemingly special cases to be unified. The \emph{exceptional} classes of permutation pairs, those for which the conclusion of the Transitivity Theorem is not certain, are defined here (\ref{DFNtameexceptional1A}, \ref{DFNtameexceptional1B}, \ref{DFNwildexceptional}, \ref{DFNtameexceptional2}).

In \S\ref{Sclassificationbytransitivity}, the Transitivity Theorem is stated and proved (\ref{THMtransitivity}). Examples are given which illustrate the range of behavior that the exceptional pairs may exhibit. Finally, it is proved that the conclusion of the Transitivity Theorem in the exceptional cases is equivalent to a certain connectivity property which is perhaps closer to ``pure'' Graph Theory than most other things in this paper (\ref{PRPgeneralconnvswalks}, \ref{PRPtameexctransiffwalks}).

In \S\ref{Sclassificationbygenus}, I give an explicit description of permutation pairs according to how genus will change after conjugating by a transposition (\ref{PRPgenusraising}, \ref{PRPgenuslowering}). The results are again not conceptual, and are mostly just a consolidation of the database from \S\ref{Siteration}. The concept of \emph{arc} is valuable here too.

In the Appendix, some \texttt{MAGMA} functions are provided. Due to the complexity in \S\ref{Sfundamentaloperation}, \S\ref{Siteration}, \S\ref{Sclassificationbytransitivity} it seemed appropriate to check the conclusions by computer in a reasonably large symmetric group. These functions were used to do this.

Many examples and pictures are provided throughout the paper. In the spirit of Dessins d'Enfants, and taking into account the familiarity that today's children have with computers, all pictures were drawn by hand using very rudimentary paint software.

\subsection*{Acknowledgements}

Most material here was generated from the summer of 2016 to the winter of 2016/2017. However, my introduction to the subject, the decision to pursue this question, and some important early progress that informed the overall direction, occurred while I was a postdoc at University of Wisconsin and while I was a fellow at the Mathematical Sciences Research Institute (DMS-1440140) in Fall 2014.

I thank Nathan Clement and Ted Dewey for many good conversations about the subject while we were at UW. I thank Brian Hwang for the same, at MSRI and at Cornell. I thank Tom Haines, for his interest in the project and for encouraging me to finish it. I thank Jack Graver and Mark Watkins, for their friendliness to me as a new member of Syracuse University, and for good conversations about Graph Theory. Finally, I thank the computing staff at UW, especially Sara Nagreen and John Heim, for their assistance with \texttt{MAGMA} even after my position at UW ended.

\section{Notation and Conventions} \label{Snotation}

The cardinality of a set $ E $ is denoted $ \vert E \vert $. The sphere is denoted $ \sphere $ and the torus is denoted $ \torus $. In examples/pictures below, $ \sphere $ is always presented as the plane, with the reader expected to imagine the point at infinity, and the orientation is always ``counterclockwise''. In examples/pictures below, $ \torus $ is always oriented by ``right-hand-rule from the outside''. For $ X $ a topological space, $ \pi_0 ( X ) $ denotes the set of connected components of $ X $. For a group $ \Gamma $ acting on a set $ E $, a subset $ F \subset E $ is \emph{$ \Gamma $-stable} iff $ g \cdot x \in F $ for all $ x \in F $; when $ \Gamma $ is cyclic and generated by $ \gamma $, such a subset is called \emph{$ \gamma $-stable} instead. For finite sequences $ ( x_1, x_2, \ldots, x_n ) $, \emph{rotation} is the operation $ ( x_1, x_2, \ldots, x_n ) \mapsto ( x_2, \ldots, x_n, x_1 ) $  and \emph{reversal} is the operation $ ( x_1, x_2, \ldots, x_n ) \mapsto ( x_n, \ldots, x_2, x_1 ) $.

\subsection{Permutations}

For $ E $ a finite set, $ S_E $ denotes its symmetric group. For $ \perm, s \in S_E $, the conjugate $ s \cdot \perm \cdot s^{-1} $ is abbreviated to $ \perm^s $.

A \emph{$ \perm $-cycle} is a cycle in the disjoint cycle decomposition of $ \perm $. \emph{Trivial cycles (fixed points) are always considered to be legitimate cycles, so the reader must be careful about the sense in which a permutation is considered to be ``a cycle''.} Cycle notation is used in the customary way: the cycle $ ( a, b, c ) $ sends $ a $ to $ b $ etc. The operation in $ S_E $ is ``functional'', so applying $ \perm_1 \perm_2 $ to $ e \in E $ results in $ \perm_1 ( \perm_2 ( e ) ) $.

For $ \perm \in S_E $, an \emph{$ \perm $-orbit} is the same as an $ \langle \perm \rangle $-orbit, where $ \langle \perm \rangle \subset S_E $ is the subgroup generated by $ \perm $. For $ \perm \in S_E $, the quantity of $ \perm $-orbits (equivalently, the quantity of $ \perm $-cycles) is denoted $ \numcycles ( \perm ) $.

For $ x, y \in S_E $, the pair $ ( x, y ) $ is \emph{transitive} iff the generated subgroup $ \langle x, y \rangle \subset S_E $ is transitive on $ E $.

\subsection{Graphs}

\begin{center}
\emph{For more details of everything in this subsection, consult the very excellent book \cite{GT}.}
\end{center}

By abuse of terminology, the term ``graph'' will always mean what is more commonly called a ``multigraph'': it is allowed that there are multiple edges incident to the same pair of vertices. A graph is \emph{nondegenerate}\footnote{I am not aware of any standard terminology for this restriction, although it also appears in some key literature, e.g. \cite{HR}.} iff every vertex is incident to at least one edge, and \emph{degenerate} otherwise. For $ G $ a graph and $ e $ an edge, $ G \setminus e $ denotes the subgraph obtained by deleting the edge $ e $: $ G \setminus e $ has the same vertices as $ G $ and all edges of $ G $ except $ e $. \emph{Note that $ G \setminus e $ may be degenerate even if $ G $ was nondegenerate.}

For $ G $ a graph and $ x, y $ vertices, a \emph{walk} from $ x $ to $ y $ means the customary thing: a sequence $ v_0, e_1, v_1, \ldots, e_n, v_n $ with $ v_i $ vertices, $ e_i $ edges such that $ e_i $ is incident to $ v_{i-1} $ and $ v_i $ for all $ 0 < i \leq n $, $ v_0 = x $, and $ v_n = y $. A graph is \emph{connected} iff there is a walk from $ x $ to $ y $ for all vertices $ x, y $. The set of connected components of a graph $ G $ is denoted $ \pi_0 ( G ) $.


Let $ \white $ and $ \black $ be formal symbols, fixed throughout the paper. For a graph $ G $ with vertices $ \vertices $, a \emph{coloring} is a function $ \vertices \rightarrow \{ \white, \black \} $. If a coloring is fixed then $ v \in \vertices $ is called a $ \white $-vertex (resp. $ \black $-vertex) iff the image of $ v $ under $ \vertices \rightarrow \{ \white, \black \} $ is $ \white $ (resp. $ \black $).

A \emph{bicolored graph} is a pair $ ( G, f ) $ where $ G $ is a graph and $ f $ is a coloring such that every edge is incident to both a $ \white $-vertex and a $ \black $-vertex. \emph{This differs from the notion of ``bipartite'' only in that a choice of color for each vertex is fixed.} For $ e $ an edge, its $ \white $-vertex will be denoted $ \white_e $ and its $ \black $-vertex $ \black_e $. Note that bicolored graphs have no loops. Throughout the rest of the paper, the coloring function will be suppressed from the notation without exception.

\subsection{Dessins d'Enfants} \label{SSdessins}

\begin{center}
\emph{For more details of everything in this subsection, consult the very excellent book \cite{GG}.}
\end{center}

A \emph{dessin d'enfant} is a triple $ ( D, X, \iota ) $ with $ X $ a connected oriented compact surface without boundary, $ D $ a finite bicolored nondegenerate\footnote{Since $ D $ is necessarily connected, this extra condition really only excludes one trivial case: a single vertex in $ \sphere $. Nonetheless, \emph{something} must be assumed, and nondegeneracy seems the best expression.} graph, $ \iota $ an embedding $ D \hookrightarrow X $, and it is required that $ X \setminus \iota ( D ) $ is homeomorphic to a union of open discs, each of which is called a \emph{face}. Necessarily, $ D $ is a connected graph. Usually the embedding $ \iota $ will be omitted from the notation. Dessins d'enfants $ ( D_1, X_1 ) $ and $ ( D_2, X_2 ) $ are \emph{isomorphic} iff there is an orientation-preserving homeomorphism $ X_1 \rightarrow X_2 $ which induces a graph isomorphism $ D_1 \directedisom D_2 $.

Associated with a dessin d'enfant $ ( D, X ) $ is a pair $ ( \whiteperm, \blackperm ) $, called the \emph{monodromy pair}\footnote{This pair indeed defines a representation of a fundamental group -- see \S4.3.1 of \cite{GG}.}, in $ S_{\edges} $, where $ \edges $ are the edges of $ D $. Necessarily, the pair $ ( \whiteperm, \blackperm ) $ is transitive. The function $ ( D, X ) \mapsto ( \whiteperm, \blackperm ) $ induces a \emph{bijection} between isomorphism classes of dessins d'enfants and equivalence classes of transitive pairs in $ S_{\edges} \times S_{\edges} $ modulo the equivalence relation of ``simultaneous conjugation'', i.e. $ ( \whiteperm^{\prime}, \blackperm^{\prime} ) $ is equivalent to $ ( \whiteperm, \blackperm ) $ iff there is $ s \in S_{\edges} $ such that $ \whiteperm^{\prime} = \whiteperm^s $ and $ \blackperm^{\prime} = \blackperm^s $. By construction of the function $ ( D, X ) \mapsto ( \whiteperm, \blackperm ) $, the $ \white $-vertices of $ D $ are in natural bijection with $ \whiteperm $-orbits and the $ \black $-vertices with $ \blackperm $-orbits. It is also true, though less obvious, that the faces of $ ( D, X ) $ are in natural bijection with $ \whiteperm \blackperm $-orbits. If $ ( D, X ) $ is a dessin d'enfant with $ \edges $ the edges of $ D $ then it follows that the Euler characteristic $ \chi $ of $ X $ can be computed by the following formula: $ \chi = \numcycles ( \whiteperm ) + \numcycles ( \blackperm ) - \vert \edges \vert + \numcycles ( \whiteperm \blackperm ) $. An important feature of the bijection is that if $ F $ is a face and $ O $ is the corresponding $ \whiteperm \blackperm $-orbit then the edges bordering $ F $ are $ O \cup \blackperm ( O ) $. A precise statement and proof of the correspondence between faces and $ \whiteperm \blackperm $-orbits seems not to appear in print, and a generalization of it will be needed anyway, so a proof is included in \S\ref{Ssyntheticgenus} of this article, using the content of \cite{HR}.

\section{Dessins d'Familles and Genus} \label{Ssyntheticgenus}

\begin{center}
\emph{Most of this section is, on some essential level, well-known and not new at all. However, some things do not appear in print and other things are not tailored to the goals here. So, \S\ref{Ssyntheticgenus} is used to set some terminology and record some basic facts.}
\end{center}

It will be necessary to work with something more general than a dessin d'enfant:

\begin{dfn}[Dessin d'Famille] \label{DFNdessindnichee}
A \emph{dessin d'famille} is a triple $ ( G, X, \jmath ) $ where $ X $ is a connected oriented compact surface without boundary, $ G $ is a bicolored graph, and $ \jmath : G \hookrightarrow X $ is an embedding. It is \emph{nondegenerate} iff $ G $ is nondegenerate. Usually the embedding $ \jmath $ will be omitted from the notation.
\end{dfn}

Note that it is not assumed that $ X \setminus G $ is homeomorphic to a finite union of open discs, nor even that $ G $ is connected. Nonetheless, the surface $ X $ allows one to extract from $ G $ something like the monodromy pair of a dessin d'enfant:

\begin{dfn}[Monodromy] \label{DFNmonodromyofnichees}
For a dessin d'famille $ ( G, X ) $ with $ \edges $ the edges of $ G $, the \emph{monodromy pair} of $ ( G, X ) $ is the pair $ ( \whiteperm, \blackperm ) $ in $ S_{\edges} $ where $ \whiteperm $ is the permutation expressing the cyclic ordering of the edges incident to each $ \white $-vertex according to the orientation of $ X $ and $ \blackperm $ is the analogous permutation relative to the $ \black $-vertices.

\emph{If desired, this can be made rigorous using the Neighborhood Theorem 3.1 in \cite{HR}.}
\end{dfn}

An easy but important fact is the equivalence of transitivity and connectedness:

\begin{lem}[Transitivity/Connectedness] \label{LEMtransiffconn}
If $ ( G, X ) $ is a dessin d'famille and $ G $ is connected then its monodromy pair $ ( \whiteperm, \blackperm ) $ is transitive. If $ G $ is nondegenerate then the converse is true.
\end{lem}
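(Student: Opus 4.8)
The plan is to translate between two combinatorial structures: walks in the graph $ G $ and words in the monodromy permutations $ \whiteperm, \blackperm $ acting on $ \edges $. The key observation is that the edges incident to a common vertex are exactly the edges in a single cycle of $ \whiteperm $ (for a $ \white $-vertex) or of $ \blackperm $ (for a $ \black $-vertex); this is precisely the content of Definition \ref{DFNmonodromyofnichees}. So ``$ e $ and $ e' $ share a $ \white $-vertex'' is the same as ``$ e' = \whiteperm^k ( e ) $ for some $ k $'', and similarly for $ \black $ and $ \blackperm $.

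For the first implication, assume $ G $ is connected; I want to show $ \langle \whiteperm, \blackperm \rangle $ is transitive on $ \edges $. Given two edges $ a, b $, pick a walk in $ G $ from an endpoint of $ a $ to an endpoint of $ b $, and thicken it to a walk that actually traverses $ a $ at the start and $ b $ at the end; along this walk, consecutive edges always share a vertex, which is alternately a $ \white $-vertex and a $ \black $-vertex (since $ G $ is bicolored). Each step ``move from edge $ e $ to the next edge $ e' $ sharing vertex $ v $'' is realized by applying a suitable power of $ \whiteperm $ or of $ \blackperm $, according to the color of $ v $. Composing these, I get a word $ w $ in $ \whiteperm, \blackperm $ with $ w(a) = b $, so the pair is transitive. (The existence of the walk-through-$ a $-and-$ b $ uses only that $ G $ has no isolated edges relative to the relevant components, which is automatic since $ a, b $ are themselves edges.)

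For the converse, assume $ G $ is nondegenerate and $ ( \whiteperm, \blackperm ) $ is transitive; I want $ G $ connected. First, nondegeneracy guarantees every vertex is incident to some edge, so it suffices to show that any two edges lie in the same component of $ G $ — equivalently, that all vertices incident to edges are connected, and there are no other vertices. Given edges $ a, b $, transitivity gives a word $ w = \perm_r^{k_r} \cdots \perm_1^{k_1} $ (each $ \perm_i \in \{ \whiteperm, \blackperm \} $) with $ w(a) = b $. Reading this word from the right, each application of a power of $ \whiteperm $ (resp. $ \blackperm $) replaces the current edge by one sharing its $ \white $-vertex (resp. $ \black $-vertex), hence stays in the same component of $ G $; so $ a $ and $ b $ are in the same component. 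Thus all edges lie in one component, and by nondegeneracy every vertex is incident to an edge, so $ G $ is connected.

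The main obstacle is bookkeeping rather than conceptual: one must be careful that a walk in $ G $ between \emph{vertices} can be promoted to a sequence of \emph{edges} with consecutive edges sharing a vertex of the correct color, handling the degenerate-looking corner cases (e.g. $ a = b $, or a walk of length zero) cleanly. The cleanest formulation records, for a walk $ v_0, e_1, v_1, \dots, e_n, v_n $, the fact that $ e_i $ and $ e_{i+1} $ are both incident to $ v_i $, and that the color of $ v_i $ alternates — this is where bicoloredness is used, and it is exactly the reason the two permutations $ \whiteperm $ and $ \blackperm $ (rather than a single permutation) suffice to generate a transitive group. The rigorous link between ``incident to $ v $'' and ``lies in a common cycle of $ \whiteperm $ or $ \blackperm $'' is precisely what the Neighborhood Theorem 3.1 of \cite{HR} supplies, as noted after Definition \ref{DFNmonodromyofnichees}.
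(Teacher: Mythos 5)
Your proposal is correct and follows essentially the same route as the paper: the key observation that two edges share a $\white$-vertex (resp. $\black$-vertex) exactly when they lie in a common $\whiteperm$-cycle (resp. $\blackperm$-cycle), then induction along a walk for connectedness $\Rightarrow$ transitivity, and for the converse the fact that applying $\whiteperm,\blackperm$ never leaves a connected component, combined with nondegeneracy. The paper phrases the converse via invariance of each component's edge set under $\langle\whiteperm,\blackperm\rangle$ rather than transporting a single word, but this is only a cosmetic difference.
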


The idea here is nearly identical to that for dessins d'enfants.

\begin{proof}
For $ a, b \in \edges $, Definition \ref{DFNmonodromyofnichees} implies the following: $ a $ and $ b $ share their $ \white $-vertex (resp. $ \black $-vertex) if and only if $ a $ and $ b $ represent the same $ \whiteperm $-orbit (resp. $ \blackperm $-orbit). Let $ a, b \in \edges $ be arbitrary. Let $ x $ be a vertex of $ a $ and $ y $ a vertex of $ b $. By hypothesis, there is a walk in $ G $ from $ x $ to $ y $. By applying the initial observation inductively along this walk, there is $ w \in \langle \whiteperm, \blackperm \rangle $ such that $ w ( a ) = b $. Now consider the converse statement. If $ a, b \in \edges $ represent the same $ \whiteperm $-orbit or $ \blackperm $-orbit then the initial observation implies that $ a $ and $ b $ are contained in the same connected component. This implies that the edges of any connected component of $ G $ are stabilized by both $ \whiteperm $ and $ \blackperm $. By assumption of nondegeneracy, any component must contain at least one edge, so transitivity implies that there cannot be more than one component.
\end{proof}

There is a subtle but important difference between a connected dessin d'famille and a dessin d'enfant. If $ ( G, X, \jmath ) $ is a dessin d'famille with edges $ \edges $ and $ G $ is connected then, by Lemma \ref{LEMtransiffconn}, its monodromy pair $ ( \whiteperm, \blackperm ) $ is transitive. As mentioned in \S\ref{Snotation}, a dessin d'enfant can be constructed from this transitive pair $ ( \whiteperm, \blackperm ) $. The subtlety is that the topological surface of this dessin d'enfant can be different from $ X $, although the underlying graph is the same and its embedding has much in common with $ \jmath $. I like to say that a connected dessin d'famille is ``a dessin d'enfant in the wrong surface''. The following example should make the idea clear:

\begin{exm}
Consider the dessin d'famille $ ( G, \torus ) $ in the following picture:
\begin{center}
\includegraphics[scale=0.4]{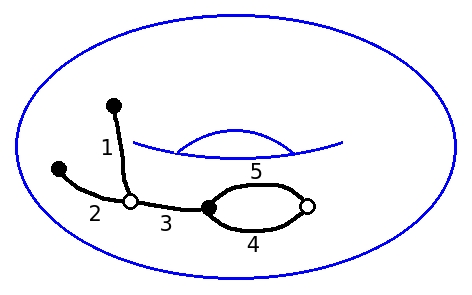}
\end{center}
This $ G $ is connected but $ ( G, \torus ) $ is not a dessin d'enfant, since the complement $ \torus \setminus G $ has a connected component which is not simply connected. The monodromy pair $ ( \whiteperm, \blackperm ) $ of $ ( G, \torus ) $ has disjoint cycle decompositions $ \whiteperm = ( 1, 2, 3 ) \cdot ( 4, 5 ) $ and $ \blackperm = ( 1 ) \cdot ( 2 ) \cdot ( 3, 4, 5 ) $. The pair $ ( \whiteperm, \blackperm ) $ is certainly transitive, and if one were to construct the corresponding dessin d'enfant one would get the same graph embedded into $ \sphere $. Because the monodromy pair is the same, the ``configuration'' of the graph would be the same in $ \sphere $ as it is in $ \torus $.
\end{exm}

\begin{dfn}[Models]
For a finite set $ E $ and arbitrary pair $ ( \whiteperm, \blackperm ) $ in $ S_E $, a dessin d'famille $ ( G, X ) $ is a \emph{model} for $ ( \whiteperm, \blackperm ) $ iff $ E $ is the edge set of $ G $ and $ ( \whiteperm, \blackperm ) $ is the monodromy pair of $ ( G, X ) $.
\end{dfn}

It is an easy formality that models exist for all pairs:

\begin{prp}[Models Exist] \label{PRPconstructionofmodels}
If $ ( \whiteperm, \blackperm ) $ is an arbitrary pair in $ S_E $ then a nondegenerate model $ ( G, X ) $ exists.
\end{prp}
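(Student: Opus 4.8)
The plan is to reduce to the transitive (equivalently connected) case, where the classical construction of a dessin d'enfant from a transitive pair applies, and then to glue the resulting surfaces together without disturbing the monodromy.

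First I would decompose $ E $ into the orbits $ E_1, \ldots, E_k $ of the group $ \langle \whiteperm, \blackperm \rangle $. (If $ E = \emptyset $ there is nothing to do: take $ G $ the empty graph, $ X = \sphere $, and the empty embedding.) Each $ E_i $ is stable under both $ \whiteperm $ and $ \blackperm $, so restriction yields a pair $ ( \whiteperm_i, \blackperm_i ) $ on $ E_i $ which is transitive by construction. By the correspondence recalled in \S\ref{SSdessins} (see \S4.2 of \cite{GG}) there is a dessin d'enfant $ ( D_i, X_i ) $ --- in particular a nondegenerate dessin d'famille --- with edge set $ E_i $ and monodromy pair $ ( \whiteperm_i, \blackperm_i ) $. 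The graph $ D_i $ is bicolored, connected, and nondegenerate.

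Second, I would assemble a single connected surface from the $ X_i $. If $ k = 1 $ we are done. Otherwise, form the connected sum $ X \defeq X_1 \# \cdots \# X_k $ by iterated binary connected sum, at each stage excising a small closed disc from the interior of a face (hence disjoint from the embedded graph, since faces are open discs in $ X_i \setminus D_i $) and gluing the resulting boundary circles by an orientation-reversing homeomorphism. Because each $ X_i $ is oriented and the gluings reverse the induced boundary orientations, $ X $ inherits an orientation; it is compact and without boundary since each $ X_i $ is; and it is connected. The disjoint union $ G \defeq D_1 \sqcup \cdots \sqcup D_k $ still embeds in $ X $, since all the surgery took place in the interiors of faces, i.e. away from the images of the $ D_i $. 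Thus $ ( G, X ) $ is a dessin d'famille with edge set $ E $, nondegenerate because each $ D_i $ is.

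Finally, I would check that the monodromy pair of $ ( G, X ) $ is $ ( \whiteperm, \blackperm ) $, which is where the only real subtlety lies. It is resolved by the locality of the monodromy: by Definition \ref{DFNmonodromyofnichees} (made rigorous via the Neighborhood Theorem 3.1 of \cite{HR}) the cyclic order of the edges incident to a vertex $ v $ depends only on the germ of the embedding near $ v $ together with the orientation of $ X $ near $ v $. A $ \white $-vertex $ v $ of $ G $ lies in exactly one $ X_i $, its incident edges all belong to $ E_i $, and neither the embedding nor the orientation near $ v $ was altered by the connected-sum surgery; hence the cyclic order at $ v $ is exactly the one recorded by $ \whiteperm_i $, and similarly for $ \black $-vertices. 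Assembling over all vertices gives $ \whiteperm = \bigsqcup_i \whiteperm_i $ and $ \blackperm = \bigsqcup_i \blackperm_i $, so $ ( \whiteperm, \blackperm ) $ is the monodromy pair of $ ( G, X ) $. The main obstacle, such as it is, is this combination of step two and the final verification: one must arrange the gluing so that it produces a connected, oriented, closed surface while touching none of the data that determines the monodromy --- both guaranteed by performing the connected sums strictly inside faces and gluing by orientation-reversing maps. (Alternatively one could bypass \cite{GG} and build each $ X_i $ by hand as a band/ribbon-graph surface --- a disc per vertex, a band per edge attached in the prescribed cyclic orders, boundary circles capped by discs --- but the connected-sum step is still needed for a non-transitive pair.)
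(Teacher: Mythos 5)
Your proposal is correct and follows essentially the same route as the paper: decompose $E$ into $\langle \whiteperm, \blackperm \rangle$-orbits, realize each transitive restriction as a dessin d'enfant, and form the connected sum using discs contained in the interiors of faces with orientation-reversing gluing, which preserves the orientation and hence the monodromy. Your explicit locality argument for why the monodromy is unchanged, and the remark on the empty case, only spell out what the paper leaves as ``immediate from the construction.''
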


In the rest of the paper, this Proposition will be used without explicit reference.

\begin{proof}
The restrictions of $ \whiteperm, \blackperm $ to each $ \langle \whiteperm, \blackperm \rangle $-orbit $ O_i \subset E $ form a transitive pair in $ S_{O_i} $ and so there is a unique dessin d'enfant $ ( D_i, X_i ) $ with edges $ O_i $ whose monodromy pair are these restrictions. Form the connected sum\footnote{Since these are \emph{oriented} manifolds, the gluing map between the punctures should be orientation-\emph{reversing}. This guarantees that the resulting surface is oriented and the orientation is consistent with the orientations of all $ X_i $, which is critical in order to get a \emph{model}.} $ ( G, X ) $ of all $ ( D_i, X_i ) $ using discs whose closures are contained entirely within the interiors of the faces of $ ( D_i, X_i ) $. It is immediate from the construction that $ ( G, X ) $ is a model for $ ( \whiteperm, \blackperm ) $. Since dessins d'enfants are always nondegenerate, it is clear that this model is nondegenerate.
\end{proof}

Recall from \S\ref{SSdessins} that if $ ( D, X ) $ is a dessin d'enfant then the number of components of $ X \setminus D $ is equal to $ \numcycles ( \whiteperm \blackperm ) $. This can be generalized very naturally to dessins d'familles, although to do so rigorously requires the machinery of \cite{HR}. However, it will be helpful to state a vague version first:

\begin{thmnonum}[preliminary version of Theorem \ref{THMcyclesandboundary}]
Let $ ( G, X ) $ be a nondegenerate dessin d'famille with edges $ \edges $ and monodromy pair $ ( \whiteperm, \blackperm ) $.

The complement $ X \setminus G $ is a disjoint union of connected components which, as open subsets of $ X $, are surfaces without boundary. Intuitively, each of these connected components can be ``completed'' to a surface with boundary, collectively forming a (likely disconnected) surface with boundary $ \overline{X \setminus G} $.

\Assertion The connected components of the manifold boundary $ \partial \overline{X \setminus G} $ are in bijection with the set $ \edges / \whiteperm \blackperm $ of $ \whiteperm \blackperm $-orbits and surject onto the connected components of the graph $ G $:
\begin{equation*}
\edges / \whiteperm \blackperm \directedisom \pi_0 ( \partial \overline{X \setminus G} ) \twoheadrightarrow \pi_0 ( G )
\end{equation*}
\end{thmnonum}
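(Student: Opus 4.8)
The plan is to reduce the statement to a purely local, combinatorial computation about a ribbon neighborhood of $G$, using the Neighborhood Theorem 3.1 of \cite{HR} to supply the local model, and then to observe that the verification is exactly the one already used for dessins d'enfants in \S\ref{SSdessins}, now applied verbatim since it never used connectedness of $G$ or simple-connectedness of the complement.

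First I would replace $X \setminus G$ by something compact. The Neighborhood Theorem provides a closed regular neighborhood $N$ of (the image of) $G$ in $X$ which is a compact surface with boundary, deformation-retracts onto $G$, and meets a neighborhood of each vertex and each edge in the expected ``ribbon'': a disc at each vertex, a band along each edge, glued so that the cyclic orders of bands around the $\white$-discs and $\black$-discs are exactly those recorded by $\whiteperm$ and $\blackperm$. Since $X$ is a compact surface without boundary, $Y \defeq \overline{X \setminus \mathrm{int}(N)}$ is a compact surface with $\partial Y = \partial N$, and the complement $X \setminus G$ deformation-retracts onto $Y$; this $Y$ is the rigorous meaning of the ``completion'' $\overline{X \setminus G}$, and $\pi_0(Y) = \pi_0(X \setminus G)$ while $\pi_0(N) = \pi_0(G)$. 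Hence it suffices to prove: (i) $\pi_0(\partial N) \directedisom \edges / \whiteperm\blackperm$, and (ii) the natural map $\pi_0(\partial N) \to \pi_0(N) = \pi_0(G)$ is surjective.

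For (i) I would carry out the boundary-tracing computation. The boundary $\partial N$ is a disjoint union of circles running alongside the bands of $N$; formally, the band of each $e \in \edges$ contributes two arcs to $\partial N$, one on each side of $e$, and I track how consecutive arcs are concatenated across the vertex discs. Reading the rule off the ribbon structure: leaving a $\white$-disc along one side of an edge $e$ one arrives at the disc of $\black_e$, crosses it using $\blackperm$, returns along an edge to a $\white$-disc, crosses it using $\whiteperm$, and so on; the composite ``cross a $\black$-disc then cross a $\white$-disc'' is, with the paper's orientation conventions, the permutation $\whiteperm\blackperm$ on $\edges$. Therefore the cyclic sequences of edges traversed by the boundary circles are precisely the $\whiteperm\blackperm$-orbits, and ``boundary circle $\mapsto$ set of edges it runs alongside'' is a well-defined bijection $\pi_0(\partial N) \directedisom \edges/\whiteperm\blackperm$. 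For (ii): the tracing rule passes from an edge only to an edge sharing a vertex with it, so each boundary circle stays within one component of $G$, giving the map $\pi_0(\partial N)\to\pi_0(G)$ compatible with (i); and it is surjective because $G$ is nondegenerate, so every component of $G$ contains an edge $e$, both sides of whose band occur in $\partial N$ (as $\partial N$ separates $N$ from $X\setminus N$ and $X$ has no boundary), whence some circle runs alongside $e$ and maps to its component. Composing, $\edges/\whiteperm\blackperm \directedisom \pi_0(\partial N) \twoheadrightarrow \pi_0(G)$.

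The main obstacle I expect is entirely bookkeeping rather than conceptual: making the passage from $X\setminus G$ to its ``completion'' $Y$ and the identification $\partial Y = \partial N$ genuinely rigorous (this is precisely what the Neighborhood Theorem of \cite{HR} is for), and pinning down the orientation conventions in the band model carefully enough that the boundary-tracing permutation comes out as $\whiteperm\blackperm$ on the nose rather than $\blackperm\whiteperm$ or an inverse. Once those are in place, dropping the hypotheses that $G$ is connected and that $X\setminus G$ is a union of discs costs nothing, since the argument is local.
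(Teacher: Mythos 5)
Your route is the classical ribbon--neighborhood intuition, and the tracing rule you read off (boundary circle through $e$ visits $e, \blackperm(e), \whiteperm\blackperm(e), \ldots$) is exactly assertion (4) of Theorem \ref{THMcyclesandboundary}. But the proposal has a genuine gap at precisely the point the paper flags as the difficulty. The Neighborhood Theorem 3.1 of \cite{HR} is a \emph{local} statement: it supplies, for each point $x$ of $G$, an embedding $h : \C \rightarrow X$ with its star/ray/cone structure. It does not supply a global closed regular neighborhood $N$ of $G$ that is a compact surface with boundary, deformation-retracts onto $G$, and carries a band decomposition whose cyclic orders at the vertex discs are $\whiteperm$ and $\blackperm$; nor does it identify $\partial N$ (or $\partial Y$ for $Y \defeq \overline{X \setminus \mathrm{int}(N)}$) with the boundary of the completion $\overline{X\setminus G}$ that the statement actually refers to, which is the specific object produced by the Scissors Theorem 2.3 of \cite{HR}. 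Since $X$ carries no assumed PL or smooth structure, assembling a global ribbon neighborhood from the local charts and matching it to the \cite{HR} completion is exactly the step you set aside as ``bookkeeping,'' and it is where essentially all of the paper's effort goes: the finite cover of each $\overline{\blackperm(e)}$ by standard neighborhoods, the careful choice of cones $c_i$, and the intersecting-cones Proposition \ref{PRPintersectingcones} are the rigorous substitute for your sentence ``consecutive arcs are concatenated across the vertex discs,'' i.e.\ for the claim that $e$, $\blackperm(e)$ and $\whiteperm\blackperm(e)$ lie on one and the same boundary component. The paper's remark that even the dessins d'enfants case ``has never been proved rigorously in print'' is precisely a warning that this step cannot be delegated to intuition about bands.

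A second, smaller gap: you assert that the map from boundary circles to $\whiteperm\blackperm$-orbits is a bijection, but nothing in the proposal rules out two distinct orbits landing on the same circle. The paper does not get this for free either: it first proves surjectivity of $\mathbf{r}_{\black}$ and constancy on orbits, and then obtains injectivity by a counting argument, using Theorem 2.3(d) of \cite{HR} to see that each edge occurs exactly twice among the combinatorial boundaries while each orbit $O$ accounts for $2\vert O\vert$ occurrences. If you had an honest band decomposition in which every edge contributes exactly two boundary arcs, the same count would finish the job, but that should be said explicitly. Your surjectivity onto $\pi_0(G)$ via nondegeneracy is fine and parallels the paper's use of $\partial\mathbf{\theta}$.
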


This is a generalization because if $ ( D, X ) $ is a dessin d'enfant then $ \vert \pi_0 ( D ) \vert = 1 $ and the components of $ X \setminus D $ are homeomorphic to discs, so $ \pi_0 ( \partial \overline{X \setminus D} ) $ is in canonical bijection with $ \pi_0 ( X \setminus D ) $.

\begin{exm} \label{EXMdessindnichee}
The following depicts a disconnected dessin d'famille $ ( G, \torus ) $:
\begin{center}
\includegraphics[scale=0.4]{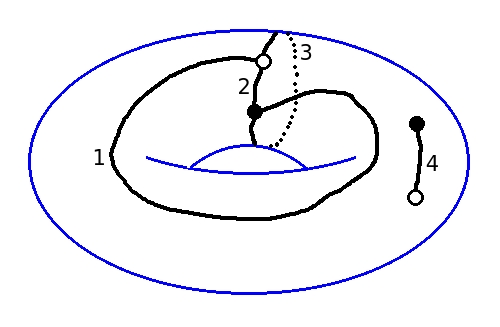}
\end{center}
Its monodromy pair is $ \whiteperm = ( 1, 2, 3 ) \cdot ( 4 ) $ and $ \blackperm = \whiteperm $, so $ \whiteperm \blackperm = ( 1, 3, 2 ) \cdot ( 4 ) $. The complement $ \torus \setminus G $ is homeomorphic to a cylinder (or annulus), and the two cycles of $ \whiteperm \blackperm $ correspond to the two boundary circles of the cylinder.
\end{exm}

Despite the intuitive nature of the claim, it is surprisingly difficult to prove. To justify the inclusion of such a proof here, note the following: \emph{It seems that even the well-known version for Dessins d'Enfants has never been proved rigorously in print.}

I now review \cite{HR}, state a precise version of the Theorem, and prove it. Let 
\begin{equation*}
\theta : \overline{X \setminus G} \longrightarrow X
\end{equation*}
be the ``completion'' of $ X \setminus G $ in the sense of Scissors Theorem 2.3 in \cite{HR}. The space $ \overline{X \setminus G} $ is a compact surface with boundary, likely disconnected, and $ \theta $ is a continuous surjection with various properties. Among those properties are the fact that $ \theta $ sends $ \partial \overline{X \setminus G} $ onto $ G $ and restricts to a homeomorphism between the interior of $ \overline{X \setminus G} $ and $ X \setminus G $. For more details, consult Theorem 2.3 in \cite{HR}.

Since manifolds with boundary are involved, it is necessary to talk about both planes and half-planes, and \cite{HR} uses $ \C $ as plane and denotes by $ \C_{+} $ the closed upper half-plane, so that $ \R = \partial \C_{+} $. The main ingredient needed to construct $ \overline{X \setminus G} $ is a certain set $ \Lambda $ of half-plane maps\footnote{Some of these $ \lambda $ may not be embeddings. This possibility is allowed so that vertices with valence $ 1 $ can be treated. For details, consult Step 1 in the proof of Theorem 2.3 in \cite{HR}.} $ \lambda : \C_{+} \rightarrow X $. There is a natural equivalence relation on the disjoint union of $ X \setminus G $ with $ \C_{+} \times \Lambda $, and $ \overline{X \setminus G} $ is the quotient space. I denote by $ \lambda [ z ] $ the point of $ \overline{X \setminus G} $ represented by $ ( z, \lambda ) \in \C_{+} \times \Lambda $, and by $ \lambda_i [ \R ] $ the set of $ \lambda [ x ] $ for all $ x \in \R $. Roughly speaking, the open half-planes $ \lambda [ \C_{+} \setminus \R ] $ overlap to form a collar of $ G $ in $ X $ and the open intervals $ \lambda [ \R ] $ attached to that collar overlap to form $ \partial \overline{X \setminus G} $.

I also attach a few keywords to notions from the important Neighborhood Theorem 3.1 in \cite{HR}. For a point $ x \in G $, not necessarily a vertex, a \emph{standard neighborhood} of $ x $ is any of the topological embeddings $ h : \C \rightarrow X $ guaranteed by Theorem 3.1 of \cite{HR}. The image $ h ( \C ) \subset X $ is necessarily open, $ h ( 0 ) = x $, and $ x $ is called the \emph{center} of $ h $. By construction, the preimage $ h^{-1} ( G ) $ is the set of all $ r e^{2 \pi i k / n} $ for all Real $ r \geq 0 $ and all $ k \in \Z $, for $ n $ the valence\footnote{If $ x $ is not a vertex then the valence is defined by \cite{HR} to be $ 2 $. This is done in order to recognize that a small disc around such $ x $ is separated by $ G $ into two components, half-discs.} of $ x $. This subset $ h^{-1} ( G ) $ is called the \emph{star} of $ h $ and, for fixed $ k \in \Z $, the set of all $ r e^{2 \pi i k / n} $ for all $ r > 0 $ is called a \emph{ray} of $ h $. By construction, no edge of $ G $ is fully contained in $ h ( \C ) $, and no vertex of $ G $ is contained in $ h ( \C ) $ except possibly $ x $, so for each ray $ r $ of $ h $ there is $ e \in \edges $ such that $ h ( r ) \subset e $. A connected component of the complement $ \C \setminus h^{-1} ( G ) $ is called a \emph{cone} of $ h $. Given a fixed orientation of $ \C $, the cones of $ h $ are cyclically ordered and, for each ray $ r $, there is a cone \emph{preceding} $ r $ and a cone \emph{following} $ r $ (which are the same cone iff $ n = 1 $). I will frequently use the following fact, from Step 6 of the proof of Theorem 2.3 in \cite{HR}: If $ h $ is a standard neighborhood and $ C $ is a cone of $ h $ then there exists $ \gamma : \C_{+} \rightarrow \overline{C} $, called \emph{suitable} below, such that $ h \circ \gamma \in \Lambda $.

There is a natural surjection, essentially just a restriction of $ \theta $, that will be important:

\begin{dfn}
The function
\begin{equation*}
\partial\mathbf{\theta} : \pi_0 ( \partial \overline{X \setminus G} ) \longrightarrow \pi_0 ( G )
\end{equation*}
sends $ J $ to the connected component of $ G $ containing the (necessarily connected) subset $ \theta ( J ) \subset G $. It is immediate from Theorem 3.2(c) in \cite{HR} that $ \partial\mathbf{\theta} $ is surjective.
\end{dfn}

Using the orientation of $ X $, another important natural function can be defined. A bit more work is necessary to define it, and this will precede the formal definition.

Let $ e \in \edges $ be arbitrary. Let $ h : \C \rightarrow X $ be a standard neighborhood of the vertex $ \black_e $. Let $ r \subset \C $ be the ray of $ h $ for which $ h ( r ) \subset e $, and let $ C \subset \C $ be the cone following $ r $. As in Step 6 of the proof of Theorem 2.3 in \cite{HR}, setting $ \lambda \defeq h \circ \gamma $ for suitable $ \gamma : \C_{+} \rightarrow \overline{C} $ yields a point $ \lambda [ 0 ] \in \partial \overline{X \setminus G} $. According to the equivalence relation defining $ \overline{X \setminus G} $, the connected component $ J \subset \partial \overline{X \setminus G} $ containing $ \lambda [ 0 ] $ is independent of $ h $ and $ \gamma $.

\begin{dfn}
The function 
\begin{equation*}
\mathbf{r}_{\black} : \edges \longrightarrow \pi_0 ( \partial \overline{X \setminus G} )
\end{equation*}
sends $ e $ to the connected component $ J $ described in the previous paragraph.
\end{dfn}

Note that $ \partial\mathbf{\theta} \circ \mathbf{r}_{\black} : \edges \rightarrow \pi_0 ( G ) $ is the obvious function.

\begin{rmk}
Roughly speaking, $ \blackperm $ rotates $ e $ to $ \blackperm ( e ) $ according to the orientation of $ X $ and, in doing so, ``sweeps out'' a small cone in $ X \setminus G $ bounded by $ e $ and $ \blackperm ( e ) $. This cone lifts to a small half-plane in $ \overline{X \setminus G} $, whose boundary is inside $ \mathbf{r}_{\black} ( e ) $.
\end{rmk}

It will also be convenient to have the counterpart to $ \mathbf{r}_{\black} $, the function
\begin{equation*}
\mathbf{r}_{\white} : \edges \longrightarrow \pi_0 ( \partial \overline{X \setminus G} )
\end{equation*}
defined by repeating the construction of $ \mathbf{r}_{\black} $ with $ \white $ and $ \black $ exchanged.

\begin{thm} \label{THMcyclesandboundary}
Denote by $ \edges / \whiteperm \blackperm $ the set of $ \whiteperm \blackperm $-orbits.

\Assertions

\begin{enumerate}
\setlength{\itemsep}{5pt}

\item \label{THMcyclesandboundary1} $ \mathbf{r}_{\black} $ is surjective.

\item \label{THMcyclesandboundary2} $ \mathbf{r}_{\black} $ is constant on each $ \whiteperm \blackperm $-orbit.

\item \label{THMcyclesandboundary3} $ \mathbf{r}_{\black} $ restricts to a bijection $ \mathbf{r}_{\black} : \edges / \whiteperm \blackperm \directedisom \pi_0 ( \partial \overline{X \setminus G} ) $.

\item \label{THMcyclesandboundary4} For $ J = \mathbf{r}_{\black} ( e ) $, the sequence of edges occurring in the combinatorial boundary associated by \cite{HR} to $ J $ is, modulo rotation and reversal, $ e, \blackperm ( e ), \whiteperm \blackperm ( e ), \ldots $.
\end{enumerate}

In particular, the set of edges occurring in $ \theta ( J ) \subset G $ is $ O \cup \blackperm ( O ) $ for $ O $ the $ \whiteperm \blackperm $-orbit such that $ \mathbf{r}_{\black} ( O ) = J $.

All assertions are still true after exchanging $ \white $ and $ \black $.
\end{thm}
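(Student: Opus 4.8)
The plan is to reduce all four assertions, together with the ``in particular'' sentence, to a single combinatorial statement --- a \emph{boundary walk} --- and then read off each conclusion. I would first assemble, purely from the machinery of \cite{HR} already recalled above: by the Scissors Theorem~2.3 the boundary $\partial\overline{X\setminus G}$ is covered by the intervals $\lambda[\R]$, $\lambda\in\Lambda$; by Step~6 of the proof of that theorem each cone $C$ of a standard neighborhood $h$ produces, via a suitable $\gamma:\C_{+}\to\overline{C}$, exactly one such interval $I_{C}\defeq(h\circ\gamma)[\R]$, and since $\theta\circ(h\circ\gamma)=h\circ\gamma$ and $\gamma(\R)=\partial\overline{C}$, the map $\theta$ carries $I_{C}$ onto $h(\partial\overline{C})$, i.e. onto the portion inside $h(\C)$ of the two (possibly coinciding) edges bounding $C$, and nothing more. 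These intervals tile the closed $1$-manifold $\partial\overline{X\setminus G}$, so its components are circles built by gluing the $I_{C}$ end to end; the combinatorial datum recording this --- identified with the ``combinatorial boundary'' of Assertion~\ref{THMcyclesandboundary4} --- is the set of \emph{corners} $(v,C)$ ($v$ a vertex, $C$ a cone at $v$) together with the successor bijection sending $(v,C)$ to the unique corner glued to it along the distinguished end of $I_{C}$.

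The key step is to make this successor rule explicit. By Definition~\ref{DFNmonodromyofnichees}, the cyclic order of edges at a vertex induced by the orientation of $X$ is precisely the order recorded by $\whiteperm$ at $\white$-vertices and by $\blackperm$ at $\black$-vertices; hence a cone $C$ at $v$ has a well-defined \emph{preceding} edge $e$ and a well-defined \emph{following} edge, equal to $\blackperm(e)$ if $v=\black_{e}$ and to $\whiteperm(e)$ if $v=\white_{e}$. Walking along the following edge $f$ of $C$ from $v$ to its other endpoint $w$ --- which has the opposite color, as $G$ is bicolored with no loops --- lands in the unique cone at $w$ whose preceding edge is $f$, and that is the successor of $(v,C)$. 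Starting from the corner to which $\mathbf{r}_{\black}(e)$ is attached, namely $(\black_{e},C)$ with $C$ the cone following the ray over $e$ (so $e$ precedes $C$), the successive traversed edges are $e,\ \blackperm(e),\ \whiteperm\blackperm(e),\ \blackperm\whiteperm\blackperm(e),\ (\whiteperm\blackperm)^{2}(e),\ \dots$, that is, exactly the members of $O\cup\blackperm(O)$ for $O$ the $\whiteperm\blackperm$-orbit of $e$, listed up to rotation and reversal as $e,\blackperm(e),\whiteperm\blackperm(e),\dots$. This is Assertion~\ref{THMcyclesandboundary4}; combined with the description of $\theta(I_{C})$ above, the edge set of the connected subgraph $\theta(J)$ equals the set of traversed edges, $O\cup\blackperm(O)$, giving the final sentence. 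It also gives Assertion~\ref{THMcyclesandboundary2}, since the walk attached to $(\whiteperm\blackperm)(e)$ passes through a corner already occurring in the walk for $e$, so $\mathbf{r}_{\black}$ is constant on $O$.

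The remaining assertions are then formal. For Assertion~\ref{THMcyclesandboundary1}: corners along any boundary circle alternate in color (consecutive corners are joined by an edge), so every circle contains a $\black$-vertex corner $(\black_{e},C)$ with $e$ preceding $C$, and that circle is $\mathbf{r}_{\black}(e)$. For Assertion~\ref{THMcyclesandboundary3}: surjectivity is \ref{THMcyclesandboundary1} together with \ref{THMcyclesandboundary2}, and injectivity holds because $f\mapsto(\black_{f},\text{cone with }f\text{ preceding})$ is a bijection of $\edges$ onto the set of $\black$-vertex corners, so the set of preceding edges of the $\black$-vertex corners lying on a fixed circle $J$ is a well-defined subset of $\edges$ that must equal $O$ for every orbit with $\mathbf{r}_{\black}(O)=J$. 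The version with $\white$ and $\black$ exchanged is proved identically, tracing instead the walk $e,\whiteperm(e),\blackperm\whiteperm(e),\dots$.

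The hard part will be the bookkeeping rather than the topology: pinning down \cite{HR}'s orientation and ``preceding/following'' conventions carefully enough that the monodromy permutations appear with the correct variance and that the successor rule on corners genuinely is the bijection underlying \cite{HR}'s combinatorial boundary (and that the reduction from all cones --- including the harmless valence-$2$ ones at interior points of edges --- to vertex corners is legitimate). Two observations make this manageable: tracing a single connected boundary curve forces the ``turn'' at each vertex to be consistent, so the two-step composite is forced to be $(\whiteperm\blackperm)^{\pm1}$ rather than $\whiteperm^{\pm1}\blackperm^{\mp1}$ and the relevant orbits are unambiguously the $\whiteperm\blackperm$-orbits; and the residual orientation ambiguity in the edge sequence is exactly what the ``modulo rotation and reversal'' clause of Assertion~\ref{THMcyclesandboundary4} is there to absorb. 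Once the conventions are fixed, no step is deep.
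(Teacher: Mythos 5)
Your reduction of everything to a ``corner complex'' with a successor bijection is the right picture, and the way you read off assertions (\ref{THMcyclesandboundary1}), (\ref{THMcyclesandboundary2}), (\ref{THMcyclesandboundary4}) and the final sentence from the resulting boundary walk is fine; but the existence of that structure is precisely the content of the theorem, and you assume it. What the Scissors Theorem of \cite{HR} actually supplies is an \emph{overlapping} family of intervals $ \lambda [ \R ] $, $ \lambda \in \Lambda $, attached to cones of arbitrarily chosen standard neighborhoods, including neighborhoods centered at interior points of edges and possibly non-embedded half-plane maps at valence-one vertices. Nothing in that setup says the vertex-cone intervals ``tile'' $ \partial \overline{X \setminus G} $ end to end, nor that the interval attached to the cone following $ e $ at $ \black_e $ lies in the same boundary component as (indeed shares a point with) the interval attached to the cone following $ \blackperm ( e ) $ at $ \white_{\blackperm ( e )} $ --- i.e. that your successor rule pairs cones sitting on the \emph{same side} of the edge. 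That is exactly what the paper's proof of assertion (\ref{THMcyclesandboundary2}) establishes: cover $ \overline{\blackperm ( e )} $ by finitely many standard neighborhoods, totally order their centers along the edge, choose the special cones $ c_i $ on a coherent side, and invoke Proposition \ref{PRPintersectingcones} --- a genuine half-disc/orientation argument (with the monotonicity/IVT step ruling out the nested-segment configuration), not a convention --- to show consecutive intervals intersect, so their union is connected. Calling this ``bookkeeping'' that becomes shallow ``once the conventions are fixed'' is where your proposal stops short: coherence of preceding/following across \emph{different, unrelated} standard neighborhoods along one edge is the theorem's actual difficulty, and your write-up contains no argument for it.

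Modulo that missing step your argument is essentially the paper's: your successor rule is exactly the chain $ \mathbf{r}_{\black} ( e ) = \mathbf{r}_{\white} ( \blackperm ( e ) ) = \mathbf{r}_{\black} ( \whiteperm \blackperm ( e ) ) $, and (\ref{THMcyclesandboundary4}) is read from the resulting walk just as in the text. The one place you genuinely diverge is injectivity in (\ref{THMcyclesandboundary3}): you argue that the preceding edges of the $ \black $-corners lying on a fixed circle $ J $ form a single $ \whiteperm \blackperm $-orbit, so at most one orbit maps to $ J $. That is a clean, counting-free alternative to the paper's argument (which uses Theorem 2.3(d) of \cite{HR} to get $ \tfrac{1}{2} \sum_J \vert \edges_J \vert = \vert \edges \vert $ and compares with $ \vert O \vert = \tfrac{1}{2} \vert \edges_J \vert $), but it needs the stronger statement that the walk starting at any such corner exhausts \emph{all} corners of $ J $ --- again the full glued-circle structure --- whereas the paper's counting route only needs each edge to occur twice among the combinatorial boundaries, which \cite{HR} hands over directly. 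So: attractive repackaging, correct combinatorics, but the topological heart (the paper's assertion (\ref{THMcyclesandboundary2}) plus Proposition \ref{PRPintersectingcones}) is asserted rather than proved.
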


\begin{proof}
\proofbox{assertion (\ref{THMcyclesandboundary1})} Let $ J \in \pi_0 ( \partial \overline{X \setminus G} ) $ be arbitrary and let $ x \in J $ be a point. By Step 7 of the proof of Theorem 2.3 in \cite{HR}, there is a standard neighborhood $ h : \C \rightarrow X $ and a cone $ C $ of $ h $ such that precomposing $ h $ with suitable $ \C_{+} \rightarrow \overline{C} $ yields $ \lambda \in \Lambda $ such that $ \lambda [ 0 ] = x $. Let $ r \subset \C $ be the ray of $ h $ for which $ C $ follows $ r $, and let $ e \in \edges $ be such that $ h ( r ) \subset e $. It follows from the definition of $ \mathbf{r}_{\black} $ that $ \mathbf{r}_{\black} ( e ) = J $, so $ \mathbf{r}_{\black} $ is surjective.

\proofbox{assertion (\ref{THMcyclesandboundary2})} Let $ e \in \edges $ be arbitrary, and set $ \epsilon \defeq \blackperm ( e ) $. For each point of $ \overline{\epsilon} $, choose a standard neighborhood of that point, and select from this open cover a finite subcover $ h_1, h_2, \ldots, h_n $. By choice, $ h_i ( 0 ) \neq h_j ( 0 ) $ for all $ i \neq j $. Since standard neighborhoods do not contain any vertices of $ G $ except possibly their centers, there must be $ i, j $ such that $ h_i ( 0 ) = \black_{\epsilon} $ and $ h_j ( 0 ) = \white_{\epsilon} $. After renumbering if necessary, I can assume that $ h_1 ( 0 ) = \black_{\epsilon} $ and $ h_n ( 0 ) = \white_{\epsilon} $. Fix a parameterization of $ \overline{\epsilon} $ by $ [ 0, 1 ] $ such that $ 0 \mapsto \black_{\epsilon} $ and $ 1 \mapsto \white_{\epsilon} $, and transport to $ \overline{\epsilon} $ the usual total order on $ [ 0, 1 ] $. After renumbering if necessary, I can assume that $ h_1 ( 0 ) < h_2 ( 0 ) < \cdots < h_n ( 0 ) $.

I now choose a special cone $ c_i $ for each $ h_i $. Let $ r_1 \subset \C $ be the ray of $ h_1 $ such that $ h_1 ( r_1 ) \subset \epsilon $ and let $ c_1 \subset \C $ be the cone of $ h_1 $ preceding $ r_1 $. Similarly, let $ r_n \subset \C $ be the ray of $ h_n $ such that $ h_n ( r_n ) \subset \epsilon $ and let $ c_n \subset \C $ be the cone of $ h_n $ following $ r_n $. Now, suppose $ 0 < i < n $. Since $ h_i $ is standard and $ h_i ( 0 ) \in \epsilon $, the star of $ h_i $ is simply $ \R $ and $ h_i ( \R ) \subset \epsilon $. The total order of $ \epsilon $ therefore orders the two rays of $ h_i $, the positive and negative axes of $ \R $, and $ c_i $ is defined to be the cone following whichever ray of $ h_i $ is ``first'' according to this order. In other words, $ c_i $ is the cone following whichever ray $ r $ satisfies $ h_{i-1} ( 0 ) < h_i ( x ) < h_i ( 0 ) $ for all $ x \in r $. For each $ i $, precomposing $ h_i $ with a suitable $ \C_{+} \rightarrow \overline{c}_i $ yields $ \lambda_i \in \Lambda $.

I claim that the union of $ \lambda_i [ \R ] \subset \partial \overline{X \setminus G} $ for all $ i $ is a \emph{connected} subset of $ \partial \overline{X \setminus G} $. Since $ \lambda_1 [ 0 ] \in \mathbf{r}_{\black} ( e ) $ by definition of $ \mathbf{r}_{\black} $ and choice of cone $ c_1 $, and since $ \lambda_n [ 0 ] \in \mathbf{r}_{\white} ( \epsilon ) $ by definition of $ \mathbf{r}_{\white} $ and choice of cone $ c_n $, this shows that $ \mathbf{r}_{\white} ( \blackperm ( e ) ) = \mathbf{r}_{\black} ( e ) $. Exchanging colors and applying again shows that $ \mathbf{r}_{\black} ( \whiteperm \blackperm ( e ) ) = \mathbf{r}_{\white} ( \blackperm ( e ) ) $. Therefore, $ \mathbf{r}_{\black} ( \whiteperm \blackperm ( e ) ) = \mathbf{r}_{\black} ( e ) $, i.e. $ \mathbf{r}_{\black} $ is constant on each $ \whiteperm \blackperm $-orbit.

Suppose that $ h_i ( \C ) \cap h_j ( \C ) \cap \epsilon \neq \emptyset $ for some $ i, j $. I can assume that $ i < j $, so $ h_i ( 0 ) < h_j ( 0 ) $ according to the total order of $ \overline{\epsilon} $. Necessarily, there is $ x \in h_i ( \C ) \cap h_j ( \C ) \cap \epsilon $ such that $ h_i ( 0 ) < x < h_j ( 0 ) $. Let $ R_i $ and $ R_j $ be the rays of $ h_i $ and $ h_j $ such that $ x \in h_i ( R_i ) \cap h_j ( R_j ) $. Let $ C_i $ be the cone of $ h_i $ preceding $ R_i $, and $ C_j $ the cone of $ h_j $ following $ R_j $. If it is shown that $ C_i = c_i $ and $ C_j = c_j $ then, by Proposition \ref{PRPintersectingcones} below, the paths $ \lambda_i [ \R ] $ and $ \lambda_j [ \R ] $ will share a point. Certainly $ 1 \leq i < n $. If $ 1 < i < n $ then it is immediate from the choices of rays $ r_i, R_i $ that $ R_i \neq r_i $, but $ h_i $ has only two rays and so the cone $ C_i $ of $ h_i $ preceding $ R_i $ is the same as the cone $ c_i $ following $ r_i $. If $ i = 1 $ then $ R_1 = r_1 $ by choice and so $ C_1 = c_1 $, since each cone is the one preceding its ray. In all cases, $ C_i = c_i $. Similarly, $ 1 < j \leq n $ and if $ 1 < j < n $ then $ R_j = r_j $ and so $ C_j = c_j $, since each cone is the one following its ray. If $ j = n $ then $ R_n = r_n $ by choice and $ C_n = c_n $ since each cone is the one following its ray. In all cases, $ C_j = c_j $.

Consider the interval $ \lambda_1 [ \R ] $. There must be $ i \neq 1 $ such that $ h_i ( \C ) \cap h_1 ( \C ) \cap \epsilon \neq \emptyset $, since otherwise $ \overline{\epsilon} $ would be the disjoint union of two open sets (i.e. disconnected). By the previous paragraph, $ \lambda_1 ( \R ) \cap \lambda_i ( \R ) \neq \emptyset $ and therefore $ \lambda_1 [ \R ] \cup \lambda_i [ \R ] $ is a connected subset of $ \partial \overline{X \setminus G} $. For the same reason, there must be a $ j \neq 1, i $ such that either $ h_j ( \C ) \cap h_1 ( \C ) \cap \epsilon \neq \emptyset $ or $ h_j ( \C ) \cap h_i ( \C ) \cap \epsilon \neq \emptyset $, and therefore $ \lambda_1 [ \R ] \cup \lambda_i [ \R ] \cup \lambda_j [ \R ] $ is connected. Continuing in this way, $ \lambda_1 [ \R ] \cup \lambda_2 [ \R ] \cup \cdots \cup \lambda_n [ \R ] $ is connected.

\proofbox{assertion (\ref{THMcyclesandboundary4})} Recall from \S5 of \cite{HR} that the \emph{combinatorial boundary} associated to any $ J \in \pi_0 ( \partial \overline{X \setminus G} ) $ is a certain closed walk $ P_J $ in $ G $ such that $ \theta ( J ) = P_J $. Fix $ e \in \edges $ and set $ J \defeq \mathbf{r}_{\black} ( e ) $. As in \cite{HR}, the circle $ J $ is partitioned by finitely many points $ x_i $ into finitely many arcs $ s_i $ so that each $ \theta ( x_i ) $ is a vertex of $ G $ and each $ \theta ( s_i ) $ is an edge of $ G $. Since $ G $ is bicolored, there are at least two $ x_i $ and at least two $ s_i $. It is clear from the definition of $ \lambda_i $ above that the image by $ \theta $ of the open interval $ \lambda_1 [ \R ] \cup \lambda_2 [ \R ] \cup \cdots \cup \lambda_n [ \R ] \subset J $ intersects the edges $ e, \blackperm ( e ), \whiteperm \blackperm ( e ) $. By construction of the combinatorial boundary $ P_J $, the sequence (modulo rotation and reversal) $ \edges_J $ of edges occurring in $ P_J $ contains $ e, \blackperm ( e ), \whiteperm \blackperm ( e ) $ as a subsequence. Repeating the argument proves the last assertion in the Theorem. It remains to prove that $ \mathbf{r}_{\black} $ is injective on $ \whiteperm \blackperm $-orbits.

\proofbox{assertion (\ref{THMcyclesandboundary3})} It follows from Theorem 2.3(d) in \cite{HR} and the construction of the combinatorial boundary in \cite{HR} that each $ e \in \edges $ appears twice among the combinatorial boundaries: either once in both $ \edges_J $ and $ \edges_{J^{\prime}} $ for some distinct $ J, J^{\prime} $ or twice in $ \edges_J $ for unique $ J $. In other words, $ \frac{1}{2} \sum_{J} \vert \edges_J \vert = \vert \edges \vert $. It follows from (\ref{THMcyclesandboundary4}) that $ \vert O \vert = \frac{1}{2} \vert \edges_J \vert $, for $ O $ the $ \whiteperm \blackperm $-orbit such that $ \mathbf{r}_{\black} ( O ) = J $. Since $ \mathbf{r}_{\black} $ is already known to be surjective by (\ref{THMcyclesandboundary1}), failure of injectivity would imply $ \sum_{O} \vert O \vert > \sum_{J} \frac{1}{2} \vert \edges_J \vert $, a contradiction due to $ \vert \edges \vert = \sum_{O} \vert O \vert $.
\end{proof}

\begin{cor} \label{CORcountingboundary}
$ \numcycles ( \whiteperm \blackperm ) \geq \vert \pi_0 ( G ) \vert $.
\end{cor}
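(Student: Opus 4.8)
The plan is to read the inequality directly off Theorem \ref{THMcyclesandboundary}, which already assembles exactly the surjection we need. First I would recall that $ \numcycles ( \whiteperm \blackperm ) $ is by definition the number of $ \whiteperm \blackperm $-orbits, so $ \numcycles ( \whiteperm \blackperm ) = \vert \edges / \whiteperm \blackperm \vert $. Then I would invoke assertion (\ref{THMcyclesandboundary3}), which supplies a bijection $ \mathbf{r}_{\black} : \edges / \whiteperm \blackperm \directedisom \pi_0 ( \partial \overline{X \setminus G} ) $, together with the surjectivity of $ \partial\mathbf{\theta} : \pi_0 ( \partial \overline{X \setminus G} ) \twoheadrightarrow \pi_0 ( G ) $ established when $ \partial\mathbf{\theta} $ was defined (via Theorem 3.2(c) of \cite{HR}).

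Composing these yields a surjection $ \edges / \whiteperm \blackperm \twoheadrightarrow \pi_0 ( G ) $. Equivalently — and perhaps cleanest to write — one observes that $ \partial\mathbf{\theta} \circ \mathbf{r}_{\black} : \edges \to \pi_0 ( G ) $ is the obvious ``edge $ \mapsto $ its connected component'' map, which is surjective precisely because $ G $ is nondegenerate (every component contains an edge) and which, by assertion (\ref{THMcyclesandboundary2}), is constant on $ \whiteperm \blackperm $-orbits, hence descends to the desired surjection out of $ \edges / \whiteperm \blackperm $. Since a surjection between finite sets forces the source to have cardinality at least that of the target, we conclude $ \numcycles ( \whiteperm \blackperm ) = \vert \edges / \whiteperm \blackperm \vert \geq \vert \pi_0 ( G ) \vert $.

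There is essentially no hard step: all the content is already in Theorem \ref{THMcyclesandboundary}, and the corollary is just the cardinality count extracted from the chain $ \edges / \whiteperm \blackperm \directedisom \pi_0 ( \partial \overline{X \setminus G} ) \twoheadrightarrow \pi_0 ( G ) $. The one point that warrants a word of care is the standing nondegeneracy hypothesis inherited from Theorem \ref{THMcyclesandboundary}: were isolated vertices allowed, they would contribute components of $ G $ that are invisible to $ \edges $ and the bound could genuinely fail, so I would keep $ ( G, X ) $ nondegenerate (as in Proposition \ref{PRPconstructionofmodels}) throughout the statement.
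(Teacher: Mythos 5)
Your proof is correct and is essentially the paper's own argument: the paper likewise deduces the corollary in one line from Theorem \ref{THMcyclesandboundary}, noting that $ \partial\mathbf{\theta} \circ \mathbf{r}_{\black} $ gives a surjection $ \edges / \whiteperm \blackperm \twoheadrightarrow \pi_0 ( G ) $ and counting. Your extra remark about the standing nondegeneracy hypothesis is a sensible point of care but does not change the argument.
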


\begin{proof}
By Theorem \ref{THMcyclesandboundary}, $ \partial\mathbf{\theta} \circ \mathbf{r}_{\black} $ is a surjection $ \edges / \whiteperm \blackperm \twoheadrightarrow \pi_0 ( G ) $. 
\end{proof}

\begin{prp} \label{PRPintersectingcones}
Let $ h_1, h_2 : \C \hookrightarrow X $ be standard neighborhoods. Let $ r_1 $ and $ r_2 $ be rays of $ h_1 $ and $ h_2 $. Suppose $ h_1 ( r_1 ) \cap h_2 ( r_2 ) \neq \emptyset $, let $ x $ be in the intersection, and let $ x_i \in r_i $ be such that $ h_i ( x_i ) = x $. \Assertion If $ x $ is ``between'' $ h_1 ( 0 ) $ and $ h_2 ( 0 ) $, i.e. there is no element of $ r_1 $ between $ 0 $ and $ x_1 $ sent by $ h_1 $ to $ h_2 ( 0 ) $ and vice-versa, then the cone $ C_1 $ of $ h_1 $ preceding $ r_1 $ intersects the cone $ C_2 $ of $ h_2 $ following $ r_2 $.

More precisely, there is a closed half disc $ d : \C_{+} \hookrightarrow X $ such that
\begin{itemize}
\item[-] $ x = d ( 0 ) $
\item[-] $ d ( \R ) \subset h_1 ( r_1 ) \cap h_2 ( r_2 ) $
\item[-] $ d ( \C_{+} \setminus \R ) \subset h_1 ( C_1 ) \cap h_2 ( C_2 ) $
\end{itemize}

In particular, $ \lambda_1 [ x_1 ] = \lambda_2 [ x_2 ] $ in $ \overline{X \setminus G} $, where $ \lambda_i $ is the precomposition of $ h_i $ with a suitable $ \C_{+} \rightarrow \overline{C}_i $.
\end{prp}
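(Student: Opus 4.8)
The plan is to reduce the whole statement to a local picture near $x$ and then settle a short orientation computation in the plane. First I would note that $x$ lies in the interior of a single edge $e$ and that this edge carries both rays near $x$. Indeed, $x = h_1(x_1)$ with $x_1 \in r_1$ and $0 \notin r_1$, so $x \neq h_1(0)$; since a standard neighborhood contains no vertex of $G$ except possibly its center, $x$ is not a vertex, hence lies in the interior of exactly one edge $e$. Each of $h_1(r_1)$ and $h_2(r_2)$ is contained in a single edge, and both contain $x$, so both lie in $e$; continuity of $h_i$ at $0 \in \overline{r_i}$ gives $h_1(0), h_2(0) \in \overline{e}$.

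Next I would build local models at $x_1$ and $x_2$. Since $x_i \neq 0$ lies on the ray $r_i$, a small enough ball $B(x_i,\epsilon_i)$ meets $h_i^{-1}(G)$ only in a chord along $r_i$ through $x_i$; thus $h_i$ maps $B(x_i,\epsilon_i)$ homeomorphically onto an open neighborhood of $x$, carrying that chord onto an arc of $e$ and the two open half-balls onto the two local sides of $e$ at $x$. Exactly one of the half-balls at $x_1$ lies in the cone $C_1$ preceding $r_1$, and --- orienting $r_1$ outward from $0$ --- a one-line inspection of the sector structure of $h_1^{-1}(G)$ shows it is the half-ball on the right of $r_1$ with respect to the fixed orientation of $\C$ (in coordinates where $r_1$ is the positive real axis, the cone preceding $r_1$ occupies angles just below $0$, i.e.\ the lower side near $x_1>0$, which is the right-hand side of the outward walk); symmetrically, the half-ball at $x_2$ lying in the cone $C_2$ following $r_2$ is the one on the left of the outward-oriented $r_2$. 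I also need monotonicity: $r \mapsto h_i(r e^{i\alpha_i})$ is an injective path into the arc $e$ starting at $h_i(0)$, hence runs monotonically away from $h_i(0)$, so under $h_i$ the outward direction of $r_i$ at $x_i$ becomes the direction along $e$ at $x$ that points away from $h_i(0)$.

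The crux is to combine these facts with the hypotheses. The ``between'' hypothesis says precisely that $h_1(0), x, h_2(0)$ occur in this order along $\overline{e}$, so ``away from $h_1(0)$'' and ``away from $h_2(0)$'' are opposite directions, $\vec u$ and $-\vec u$, along $e$ at $x$. Because standard neighborhoods are orientation-compatible with $X$ --- exactly the feature that makes ``cone preceding/following a ray'' and the maps $\mathbf{r}_{\black},\mathbf{r}_{\white}$ well-defined --- transporting the two local models into $X$ shows that $h_1(C_1)$ contains a half-ball of $X \setminus G$ on the right of $(e,\vec u)$ near $x$, while $h_2(C_2)$ contains one on the left of $(e,-\vec u)$; since reversing the direction of travel swaps left and right, these are the \emph{same} local side of $e$. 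So a sufficiently small half-ball around $x$ on that side lies in $h_1(C_1) \cap h_2(C_2)$ with its diameter inside $h_1(r_1) \cap h_2(r_2)$, and any parametrization $d : \C_{+} \hookrightarrow X$ of it with $d(0)=x$ is the asserted half disc. Finally $d$ exhibits the half-plane maps $\lambda_1,\lambda_2$ (the precompositions of $h_i$ with suitable maps onto $\overline{C}_i$) as overlapping near their common boundary point $x$, so $\lambda_1[x_1] = \lambda_2[x_2]$ in $\overline{X \setminus G}$ by the equivalence relation defining $\overline{X \setminus G}$ in Theorem 2.3 of \cite{HR}.

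I expect the orientation bookkeeping in the third paragraph to be the only real obstacle --- checking that ``preceding $r_1$'' together with one direction along $e$, and ``following $r_2$'' together with the opposite direction, single out the same side rather than opposite sides; this is precisely where the hypotheses (\emph{preceding} versus \emph{following}, plus \emph{between}) are used, and it is the natural place for a sign error. A secondary nuisance is the degenerate configuration in which $h_1(0)$, $x$, $h_2(0)$ are not all distinct, or in which a center has valence one (so its unique cone touches both sides of $e$ near $x$ and the relevant $\lambda$ is one of the folded half-plane maps of \cite{HR}); each such case I would dispatch by the same local argument, reading ``between'' as the assertion that the relevant sub-chords of $r_1$ and $r_2$ do not overrun each other's centers.
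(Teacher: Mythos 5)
Your argument is correct, and it shares the paper's skeleton---localize at $x$, use monotonicity of $h_i$ along the segment of $r_i$ from $0$ to $x_i$ into $\overline{e}$, and spend the ``between'' hypothesis exactly once, to make the two outward directions along $e$ at $x$ opposite---but the orientation comparison is carried out by a genuinely different device. The paper fixes a small segment $I$ transverse to the rays at $x$, builds a convex quadrilateral in $D_2$ with diagonal $I_2$ and its other two vertices on $r_2$, transports one triangular half by $h_1^{-1}\circ h_2$ into a triangle with edge $I_1$ and apex $v_1 \in r_1$, reduces everything to the claim that $v_1$ lies between $0$ and $x_1$ (proved by contradiction using monotonicity via the Intermediate Value Theorem together with the ``between'' hypothesis), and then reads off that the two triangles induce opposite orientations on $I$; ``preceding''/``following'' enter only through which component of $I_i \setminus \{x_i\}$ is traversed first or second by the induced boundary orientation. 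You instead translate ``preceding''/``following'' into ``right/left of the outward-directed ray,'' push the two half-ball models forward through the orientation-compatible charts, and finish with the observation that left of $(e,-\vec{u})$ equals right of $(e,\vec{u})$. Your dictionary is the correct one---it agrees with the paper's triangle-boundary characterization, so the sign comes out right, and note the conclusion is independent of which orientation convention one fixes, since flipping it swaps preceding and following simultaneously in both charts. Your version is shorter and more visual, at the price of leaning on exactly the left/right bookkeeping you flagged; the paper's is more laborious but never has to name a left/right convention because it argues straight from its operative, orientation-induced definition of preceding/following. One caveat applies to both write-ups equally: the hypothesis as glossed does not exclude $h_1(0)=h_2(0)$, where it holds vacuously while the conclusion can fail (e.g.\ $h_1=h_2$, $r_1=r_2$); in the sole application, inside the proof of Theorem \ref{THMcyclesandboundary}, the centers are distinct, and your closing remarks on degenerate and valence-one configurations are at the same level of care as the paper's own treatment.
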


\begin{proof}
The last claim follows from the first by definition of the equivalence relation used to construct $ \overline{X \setminus G} $, Step 2 of the proof of Theorem 2.3 in \cite{HR}.

Let $ D_1 \subset \C $ be an open disc centered at $ x_1 $, so small that $ D_1 $ does not intersect any other ray of $ h_1 $ and $ h_1 ( D_1 ) \subset h_2 ( \C ) $. Since $ h_1 ( \C ) \subset X $ is open, there is an open disc $ D_2 \subset \C $ centered at $ x_2 $ such that $ h_2 ( D_2 ) \subset h_1 ( D_1 ) $. Necessarily, $ D_i \setminus r_i $ consists of two connected components, one of which is contained in the cone $ C_i $. Call that component $ H_i $, and note that there is a homeomorphism $ \C_{+} \simeq \overline{H}_i $ which sends $ 0 $ to $ x_i $ and restricts to $ \R \simeq \partial H_i $. Thus, the goal is to show that $ h_2 ( H_2 ) \subset h_1 ( H_1 ) $, since then precomposing $ h_2 $ with the homeomorphism yields a closed half disc $ \C_{+} \hookrightarrow X $ with the desired properties.

Let $ I_2 \subset D_2 $ be a closed segment (i.e. homeomorphic to $ [ 0, 1 ] $) whose interior contains $ x_2 $ and such that $ I_2 \cap r_2 = \{ x_2 \} $. Set $ I \defeq h_2 ( I_2 ) $. Let $ I_1 \subset D_1 $ be the segment such that $ h_1 ( I_1 ) = I $. Note that $ x_1 $ is contained in the interior of $ I_1 $ and that $ I_1 \cap r_1 = \{ x_1 \} $.

For any triangle $ T_1 \subset \C $ with edge $ I_1 $ and opposite vertex $ 0 $, the orientation of its boundary induced by $ X $ via $ h_1 $ orders the two connected components of $ I_1 \setminus \{ x_1 \} $. By definition of ``preceding'', the component contained in $ C_1 $ is that which is considered ``first'' by this ordering. Similarly, any triangle $ T_2 \subset \C $ with edge $ I_2 $ and opposite vertex $ 0 $ orders the two connected components of $ I_2 \setminus \{ x_2 \} $ and, by definition of ``following'', the component contained in $ C_2 $  is that which is considered ``second''. Thus, the goal is to show that there are $ T_1, T_2 $ such that $ h_1 ( T_1 ) $ and $ h_2 ( T_2 ) $ induce \emph{opposite} orientations on $ I $.

Let $ Q \subset D_2 $ be a convex quadrilateral such that $ I_2 $ is one diagonal of $ Q $ and the other two vertices of $ Q $ are on the ray $ r_2 $. Let $ v_2 \in Q \cap r_2 $ be the vertex between $ 0 $ and $ x_2 $ on the ray $ r_2 $, and let $ t_2 \subset Q $ be the triangle, half of $ Q $, with edge $ I_2 $ and opposite vertex $ v_2 $. It is clear that there is a triangle $ T_2 \subset \C $ with edge $ I_2 $ and opposite vertex $ 0 $ such that $ t_2 \subset T_2 $, and therefore that $ T_2 $ and $ t_2 $ induce the same ordering of $ \pi_0 ( I_2 \setminus \{ x_2 \} ) $.

Now, let $ t_1 \subset \C $ be such that $ h_2^{-1} ( h_1 ( t_1 ) ) $ is the triangle in $ Q $ complementary to $ t_2 $. So, $ t_1 $ is a triangle with edge $ I_1 $ and opposite vertex $ v_1 \in r_1 $. I claim that $ v_1 $ is between $ x_1 $ and $ 0 $ on the ray $ r_1 $, which finishes the proof: it is clear that there is a triangle $ T_1 $ with edge $ I_1 $ and opposite vertex $ 0 $ such that $ t_1 \subset T_1 $, therefore $ T_1 $ and $ t_1 $ induce the same ordering of $ \pi_0 ( I_1 \setminus \{ x_1 \} ) $, thus the goal is to prove that $ h ( t_1 ) $ and $ h ( t_2 ) $ induce opposite orientations of $ I $, which is clear since $ h ( t_1 ), h ( t_2 ) \subset X $ are simplices intersecting only along $ I $.

Exactly one of $ v_1 $ and $ h_1^{-1} ( h_2 ( v_2 ) ) $ is between $ 0 $ and $ x_1 $. Suppose for contradiction that $ h_1^{-1} ( h_2 ( v_2 ) ) $ is, instead of $ v_1 $. Let $ E_1 \simeq [ 0, 1 ] $ be the closed segment on $ \overline{r}_1 $ from $ 0 $ to $ x_1 $. Let $ E_2 \simeq [ 0, 1 ] $ be the closed segment on $ \overline{r}_2 $ from $ 0 $ to $ x_2 $. By definition of ``standard'', there is an edge $ \epsilon $ of $ G $ such that $ h_1 ( r_1 ), h_2 ( r_2 ) \subset \overline{\epsilon} $. The restrictions $ h_i : E_i \rightarrow \overline{\epsilon} $ are continuous injections $ [ 0, 1 ] \hookrightarrow \R $, therefore monotone by a corollary of the \emph{Intermediate Value Theorem}. Since $ h_1 ( x_1 ) = h_2 ( x_2 ) $, the images intersect. By the contradiction hypothesis, the images share at least two points. An easy argument then shows that either $ h_1 ( E_1 ) \subset h_2 ( E_2 ) $ or $ h_1 ( E_1 ) \supset h_2 ( E_2 ) $, so either $ h_1 ( 0 ) \in h_2 ( E_2 ) $ or $ h_2 ( 0 ) \in h_1 ( E_1 ) $, violating the hypothesis.
\end{proof}

A nice fact about trees, hinting at more general statements, can now be proved without too much work:

\begin{prp}[Tree Case] \label{PRPtreecase}
Let $ ( T, \sphere ) $ be a tree dessin d'enfant with edges $ \edges $ and monodromy pair $ ( \whiteperm, \blackperm ) $. \Assertion For arbitrary $ s \in S_{\edges} $, $ ( \whiteperm^s, \blackperm ) $ is a transitive pair if and only if $ \numcycles ( \whiteperm^s \blackperm ) = 1 $.
\end{prp}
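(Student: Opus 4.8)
The plan is to prove the two implications separately; the content is entirely in the forward direction, and it reduces to the Euler characteristic formula for dessins d'enfants combined with the fact that a tree with $ \vert \edges \vert $ edges has exactly $ \vert \edges \vert + 1 $ vertices.

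First I would dispose of the easy implication. If $ \numcycles ( \whiteperm^s \blackperm ) = 1 $ then $ \whiteperm^s \blackperm $ consists of a single cycle, which is necessarily an $ \vert \edges \vert $-cycle, so the cyclic subgroup $ \langle \whiteperm^s \blackperm \rangle $ already acts transitively on $ \edges $; a fortiori the larger subgroup $ \langle \whiteperm^s, \blackperm \rangle $ does, i.e. $ ( \whiteperm^s, \blackperm ) $ is transitive. (Alternatively one can apply Corollary \ref{CORcountingboundary} to any nondegenerate model of $ ( \whiteperm^s, \blackperm ) $ to conclude that its graph has at most one component, and then invoke Lemma \ref{LEMtransiffconn}.)

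For the forward implication I would first record two bookkeeping facts. Conjugation preserves cycle type, so $ \numcycles ( \whiteperm^s ) = \numcycles ( \whiteperm ) $. Also, since $ T $ is a tree with edge set $ \edges $ it has $ \vert \edges \vert + 1 $ vertices, and its vertices are in bijection with the $ \whiteperm $-orbits together with the $ \blackperm $-orbits, so $ \numcycles ( \whiteperm ) + \numcycles ( \blackperm ) = \vert \edges \vert + 1 $. Now suppose $ ( \whiteperm^s, \blackperm ) $ is transitive and let $ ( D^{\prime}, X^{\prime} ) $ be the dessin d'enfant whose monodromy pair it is; its edge set is again $ \edges $. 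Then
\begin{equation*}
\chi_{X^{\prime}} = \numcycles ( \whiteperm^s ) + \numcycles ( \blackperm ) - \vert \edges \vert + \numcycles ( \whiteperm^s \blackperm ) = 1 + \numcycles ( \whiteperm^s \blackperm ) ,
\end{equation*}
using the two facts just recorded. Since $ X^{\prime} $ is a connected oriented compact surface without boundary, its genus is a nonnegative integer and hence $ \chi_{X^{\prime}} \leq 2 $; together with the displayed equality this forces $ \numcycles ( \whiteperm^s \blackperm ) \leq 1 $, hence $ \numcycles ( \whiteperm^s \blackperm ) = 1 $ because this quantity is always at least $ 1 $. (As a byproduct $ \chi_{X^{\prime}} = 2 $, so in fact $ X^{\prime} = \sphere $ and $ D^{\prime} $ is again a tree.)

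There is no genuine obstacle here; the only point that needs care is that the identity $ \chi = \numcycles ( \whiteperm ) + \numcycles ( \blackperm ) - \vert \edges \vert + \numcycles ( \whiteperm \blackperm ) $ holds for \emph{bona fide} dessins d'enfants, where the complement of the graph is a union of open discs, and can fail for an arbitrary dessin d'famille model of the pair. Hence one must pass to the actual dessin d'enfant $ ( D^{\prime}, X^{\prime} ) $ attached by the bijection to the transitive pair $ ( \whiteperm^s, \blackperm ) $, rather than reason with a model supplied by Proposition \ref{PRPconstructionofmodels}; once that is done the argument is the short computation above.
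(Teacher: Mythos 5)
Your proof is correct. The forward implication is essentially the paper's own argument: nonnegativity of the genus of the dessin d'enfant attached to the transitive pair $(\whiteperm^s,\blackperm)$, plus the Euler characteristic formula and the tree bookkeeping (you use $\numcycles(\whiteperm)+\numcycles(\blackperm)=\vert\edges\vert+1$ directly, while the paper equivalently compares $\numcycles(\whiteperm^s\blackperm)$ to $\numcycles(\whiteperm\blackperm)=1$ using that the genus of $\sphere$ is $0$); your care about working with the genuine dessin d'enfant rather than a dessin d'famille model is exactly the point the paper's wording ``so it corresponds to a dessin d'enfant'' is making. Where you genuinely diverge is the easy implication: you observe that $\numcycles(\whiteperm^s\blackperm)=1$ means $\whiteperm^s\blackperm$ is a single $\vert\edges\vert$-cycle, so already the cyclic subgroup $\langle\whiteperm^s\blackperm\rangle$ is transitive and a fortiori $\langle\whiteperm^s,\blackperm\rangle$ is. The paper instead proves the contrapositive: if the pair is not transitive, a nondegenerate model is disconnected by Lemma \ref{LEMtransiffconn}, and Corollary \ref{CORcountingboundary} then gives $\numcycles(\whiteperm^s\blackperm)\geq 2$. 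Your route is more elementary, bypassing Corollary \ref{CORcountingboundary} and hence the Hoffman--Richter boundary machinery behind Theorem \ref{THMcyclesandboundary} entirely; the paper's route costs more but is the same mechanism it needs anyway for the general (non-tree) statements, which is presumably why it is phrased that way. Your parenthetical alternative for this direction is precisely the paper's argument, so you have both in hand.
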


\begin{proof}
Assume that $ ( \whiteperm^s, \blackperm ) $ is transitive, so it corresponds to a dessin d'enfant. Let $ g^s $ be the genus of this dessin d'enfant. In particular, $ g^s \geq 0 $. Since the new dessin d'enfant has the same quantity of vertices and the same quantity of edges as $ T $, and since the genus of $ \sphere $ is $ 0 $, the formula (cf. \S\ref{SSdessins}) for Euler Characteristic implies that $ \numcycles ( \whiteperm^s \blackperm ) \leq \numcycles ( \whiteperm \blackperm ) $. Since $ T $ is a tree, $ \numcycles ( \whiteperm \blackperm ) = 1 $, which forces $ \numcycles ( \whiteperm^s \blackperm ) = 1 $. For the converse, I prove the contrapositive. Let $ ( G, X ) $ be a nondegenerate model for $ ( \whiteperm^s, \blackperm ) $. By hypothesis and Lemma \ref{LEMtransiffconn}, $ G $ is disconnected. By Corollary \ref{CORcountingboundary}, $ \numcycles ( \whiteperm^s \blackperm ) \geq 2 $.
\end{proof}


It will be very useful to have generalizations, for dessins d'familles, of Euler characteristic and genus. The following is such, and simply extends without modification the well-known formula from transitive permutation pairs to all permutation pairs:

\begin{dfn}[Synthetic Genus] \label{DFNsyntheticgenus}
For a finite set $ E $ and arbitrary pair $ ( \whiteperm, \blackperm ) $ in $ S_E $, the \emph{synthetic Euler characteristic} of $ ( \whiteperm, \blackperm ) $ is 
\begin{equation*}
\chi_{( \whiteperm, \blackperm )} \defeq \numcycles ( \whiteperm ) + \numcycles ( \blackperm ) - \vert E \vert + \numcycles ( \whiteperm \blackperm )
\end{equation*}
Accordingly, its \emph{synthetic genus} is\footnote{Of course, this is inspired by the well-known formula $ \chi = 2 - 2 g $.} 
\begin{equation*}
g_{( \whiteperm, \blackperm )} \defeq 1 - \frac{ \chi_{( \whiteperm, \blackperm )} }{ 2 }
\end{equation*}
The synthetic Euler characteristic and synthetic genus of a dessin d'famille are defined to be those of its monodromy pair.
\end{dfn}

Clearly, if $ ( G, X ) $ is a dessin d'enfant then its synthetic genus is the genus of $ X $. More generally, if $ ( G, X ) $ is a connected dessin d'famille then its synthetic genus is the genus of the surface, not necessarily $ X $, into which $ G $ embeds as a dessin d'enfant (via its monodromy pair).

\begin{exm} \label{EXMsyntheticgenus}
Let $ ( \whiteperm, \blackperm ) $ be from Example \ref{EXMdessindnichee}. By the information there, the synthetic Euler characteristic is $ ( 2 + 2 ) - 4 + 2 = 2 $. Thus, this ``genuinely toral'' dessin d'famille has a \emph{spherical} synthetic Euler characteristic. The important conclusion to draw here is that connectivity and synthetic genus are complementary.
\end{exm}

The synthetic Euler characteristic of a dessin d'famille $ ( G, X, \jmath ) $ is the sum of those for $ ( G_i, X, \jmath \vert_{G_i} ) $, where $ G_i $ are the connected components of $ G $. The statement can be translated into one about synthetic genus: for example, if $ ( G, X ) $ has synthetic genus $ g $ and two connected components $ ( G_i, X ) $ whose synthetic genuses are $ g_1 $ and $ g_2 $ then $ g_1 + g_2 = g + 1 $. In particular, synthetic Euler characteristic is always even and synthetic genus is always integral.

\section{Sequences and Arcs} \label{Sarcs}

\begin{center}
\emph{Throughout this section, $ E $ is a finite set and $ S_E $ is its symmetric group.}
\end{center}

The following terminology will be convenient in the remainder of the paper:

\begin{dfn}[Sequences]
For $ \perm \in S_E $, a \emph{$ \perm $-sequence} is a finite nonempty sequence $ x_0, x_1, \ldots, x_n $ in $ E $ satisfying $ \perm ( x_i ) = x_{i+1} $ for all $ 0 \leq i < n $.

For $ x, y \in E $, a \emph{$ \perm $-sequence from $ x $ to $ y $} is simply a $ \perm $-sequence $ x_0, x_1, \ldots, x_n $ such that $ x_0 = x $ and $ x_n = y $. \emph{It is allowed that $ x = y $ even if $ n > 0 $.}
\end{dfn}

Obviously, if there is a $ \perm $-sequence from $ x $ to $ y $ then $ x, y $ represent the same $ \perm $-orbit.

\begin{exm} \label{EXMsequences}
Define $ \perm \in S_4 $ by disjoint cycle decomposition: $ \perm \defeq ( 1, 2, 3 ) \cdot ( 4 ) $. The sequences $ 1, 2, 3, 1, 2 $ and $ 4, 4, 4 $ are both $ \perm $-sequences. The minimal $ \perm $-sequence from $ 1 $ to $ 3 $ is $ 1, 2, 3 $, while the minimal $ \perm $-sequence from $ 1 $ to $ 1 $ is the singleton sequence $ 1 $.
\end{exm}

The following notion will be used heavily in all remaining sections:

\begin{dfn}[Arcs] \label{DFNarcs}
Let $ \perm \in S_E $ be arbitrary and $ x, y \in E $ represent the same $ \perm $-orbit. \emph{It is allowed that $ x = y $.} Let $ x_0, \ldots, x_n $ be the unique $ \perm $-sequence from $ x $ to $ y $ of minimal length, which is a singleton iff $ x = y $. For $ x \neq y $, the \emph{$ \perm $-arc from $ x $ to $ y $} is defined to be the subsequence $ x_1, \ldots, x_n $. For $ x = y $, the \emph{$ \perm $-arc from $ x $ to $ y $} is defined to be the empty sequence.
\end{dfn}

A key feature of minimal sequences, and therefore also arcs, is that $ x_i \neq x, y $ for all $ 0 < i < n $, although sometimes there are no such $ i $.

\begin{exm} \label{EXMarcs}
Let $ \perm $ be as in Example \ref{EXMsequences}. The $ \perm $-arc from $ 1 $ to $ 3 $ is $ 2, 3 $ and the $ \perm $-arc from $ 4 $ to $ 4 $ is the empty sequence.
\end{exm}

The notions of ``sequence'' and ``arc'' will be used exclusively for the case that $ \perm = \whiteperm \blackperm $ for some pair $ ( \whiteperm, \blackperm ) $.

It will be useful in \S\ref{Sclassificationbytransitivity} to note that if $ ( G, X ) $ is a dessin d'famille with edges $ \edges $ and monodromy pair $ ( \whiteperm, \blackperm ) $ then any $ \whiteperm \blackperm $-sequence defines a \emph{walk} in the graph $ G $, as follows.

Suppose $ x, y \in \edges $ are in the same $ \whiteperm \blackperm $-orbit and let $ x_0, x_1, \ldots, x_n \in \edges $ be a $ \whiteperm \blackperm $-sequence from $ x $ to $ y $. Consider the extended sequence
\begin{equation} \label{EQNwalk}
x_0, \blackperm ( x_0 ), x_1, \blackperm ( x_1 ), \ldots, \blackperm ( x_{n-1} ), x_n \in \edges
\end{equation}

Any pair of consecutive edges in sequence (\ref{EQNwalk}) shares a well-defined vertex: $ x_0 $ and $ \blackperm ( x_0 ) $ are both incident to the same $ \black $-vertex, $ \blackperm ( x_0 ) $ and $ x_1 = \whiteperm ( \blackperm ( x_0 ) ) $ are both incident to the same $ \white $-vertex, etc. Thus, sequence (\ref{EQNwalk}) defines a walk from the $ \white $-vertex of $ x $ to the $ \black $-vertex of $ y $.

By omitting from (\ref{EQNwalk}) the first edge or the last edge or both, one similarly obtains walks from either vertex of $ x $ to either vertex of $ y $:
\begin{itemize}
\setlength{\itemsep}{5pt}

\item[-] The subsequence $ \blackperm ( x_0 ), x_1, \blackperm ( x_1 ), \ldots, \blackperm ( x_{n-1} ), x_n $ defines a walk from the $ \black $-vertex of $ x $ to the $ \black $-vertex of $ y $.

\item[-] The subsequence $ x_0, \blackperm ( x_0 ), x_1, \blackperm ( x_1 ), \ldots, \blackperm ( x_{n-1} ) $ defines a walk from the $ \white $-vertex of $ x $ to the $ \white $-vertex of $ y $.

\item[-] The subsequence $ \blackperm ( x_0 ), x_1, \blackperm ( x_1 ), \ldots, \blackperm ( x_{n-1} ) $ defines a walk from the $ \black $-vertex of $ x $ to the $ \white $-vertex of $ y $.
\end{itemize}

Note also that the edges in (\ref{EQNwalk}) are all contained in a single boundary component of $ X \setminus G $. Of course, this is related to ``boundary walk'', cf. \S3.1.4 of \cite{GT}.

\begin{exm} \label{EXMwalksfromarcs}
Consider the following dessin d'enfant:
\begin{center}
\includegraphics[scale=0.4]{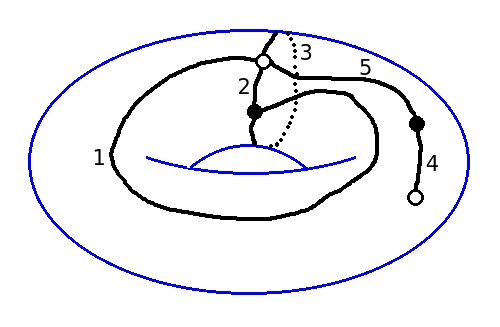}
\end{center}
Its monodromy pair $ ( \whiteperm, \blackperm ) $ in $ S_5 $ has the following disjoint cycle decompositions: $ \whiteperm = ( 1, 2, 5, 3 ) \cdot ( 4 ) $ and $ \blackperm = ( 1, 2, 3 ) \cdot ( 4, 5 ) $. From this, $ \whiteperm \blackperm = ( 1, 5, 4, 3, 2 ) $. The minimal $ \whiteperm \blackperm $-sequence from $ 1 $ to $ 4 $ is therefore $ 1, 5, 4 $. The extended sequence (\ref{EQNwalk}) is $ 1, 2, 5, 4, 4 $, which yields the following walk: $ \white, 1, \black, 2, \white, 5, \black, 4, \white, 4, \black $. It is a good idea to visualize this walk in the picture.
\end{exm}

\section{Incidence and Deletion} \label{Sdeletion}

\begin{center}
\emph{Throughout this section, $ ( D, X ) $ represents an arbitrary dessin d'enfant with edges $ \edges $ and monodromy pair $ ( \whiteperm, \blackperm ) $.}
\end{center}

Recall from \S\ref{SSdessins} that faces of $ ( D, X ) $ correspond naturally to $ \whiteperm \blackperm $-orbits.

\begin{lem}[Face Incidence] \label{LEMfaceincidence}
If $ e \in \edges $ then the faces of $ ( D, X ) $ bordered by $ e $ correspond to those $ \whiteperm \blackperm $-orbits containing $ e $ and $ \whiteperm ( e ) $. In particular, $ e $ borders only one face of $ ( D, X ) $ if and only if $ e $ and $ \whiteperm ( e ) $ represent the same $ \whiteperm \blackperm $-orbit.
\end{lem}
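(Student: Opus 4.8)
The plan is to deduce this directly from Theorem \ref{THMcyclesandboundary} applied to the dessin d'enfant $(D,X)$, after translating its statement about the boundary components of $\overline{X\setminus D}$ into the concrete language of faces. Because $(D,X)$ is a genuine dessin d'enfant, every connected component of $X\setminus D$ is an open disc, so its completion in the sense of the Scissors Theorem 2.3 of \cite{HR} is a closed disc with a single boundary circle. Since $\theta$ restricts to a homeomorphism from the interior of $\overline{X\setminus D}$ onto $X\setminus D$, this yields canonical bijections $\pi_0(\partial\overline{X\setminus D})\cong\pi_0(\overline{X\setminus D})\cong\pi_0(X\setminus D)$, i.e. with the set of faces; composing with the bijection $\mathbf{r}_{\black}:\edges/\whiteperm\blackperm\directedisom\pi_0(\partial\overline{X\setminus D})$ of Theorem \ref{THMcyclesandboundary}(\ref{THMcyclesandboundary3}) recovers the face--orbit correspondence recalled in \S\ref{SSdessins}.

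Next I would pin down, for a fixed face $F$ with associated boundary circle $J$ and $\whiteperm\blackperm$-orbit $O=\mathbf{r}_{\black}^{-1}(J)$, exactly which edges border $F$. Using the properties of $\theta$ from \cite{HR}, the component of $\overline{X\setminus D}$ lying over $F$ is mapped onto $\overline{F}$, restricting to a homeomorphism on interiors and sending $J$ onto the combinatorial boundary walk $\theta(J)\subset D$; hence $\overline{F}\cap D=\theta(J)$, and by the last assertion of Theorem \ref{THMcyclesandboundary} this is precisely the subgraph whose edge set is $O\cup\blackperm(O)$. Therefore an edge $e$ borders $F$ if and only if $e\in O\cup\blackperm(O)$, i.e. if and only if $O$ is the $\whiteperm\blackperm$-orbit of $e$ or the $\whiteperm\blackperm$-orbit of $\blackperm^{-1}(e)$.

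Finally I would read off the statement. Since $\mathbf{r}_{\black}$ is a bijection on orbits, the preceding paragraph shows that the faces bordered by $e$ correspond to exactly the $\whiteperm\blackperm$-orbit of $e$ and the $\whiteperm\blackperm$-orbit of $\blackperm^{-1}(e)$ --- two orbits which may or may not coincide. Because $\whiteperm\blackperm$ preserves each of its own orbits setwise and $\whiteperm\blackperm(\blackperm^{-1}(e))=\whiteperm(e)$, the orbit of $\blackperm^{-1}(e)$ equals the orbit of $\whiteperm(e)$; so the faces bordered by $e$ correspond to the $\whiteperm\blackperm$-orbits containing $e$ and $\whiteperm(e)$, and $e$ borders a single face precisely when these coincide, i.e. when $e$ and $\whiteperm(e)$ represent the same $\whiteperm\blackperm$-orbit.

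I expect the only real friction to be in the second paragraph, namely being honest about the identification $\overline{F}\cap D=\theta(J)$: this requires carefully invoking which parts of Theorem 2.3 and \S5 of \cite{HR} guarantee that $\theta$ carries the disc-with-boundary over $F$ onto $\overline{F}$ with the combinatorial boundary landing exactly on the closed subgraph with edge set $O\cup\blackperm(O)$. Everything after that is bookkeeping with orbits; in particular the orbit count above already delivers the ``exactly two faces, possibly equal'' dichotomy, so there is no need to separately invoke that each edge occurs exactly twice among the combinatorial boundaries (Theorem 2.3(d) of \cite{HR}).
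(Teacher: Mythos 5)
Your proof is correct and follows essentially the same route as the paper: the paper simply cites the fact recalled in \S\ref{SSdessins} (and proved via Theorem \ref{THMcyclesandboundary}) that the edges bordering the face of an orbit $O$ are $O\cup\blackperm(O)$, and then does exactly your orbit bookkeeping, noting that the orbit of $\blackperm^{-1}(e)$ is the orbit of $\whiteperm(e)$. The only difference is that you inline the topological identification of faces with boundary components rather than quoting it, which is fine but not needed.
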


\begin{proof}
If $ F $ is a face of $ ( D, X ) $ and the corresponding cycle of $ \whiteperm \blackperm $ is $ c = ( x_1, \ldots, x_n ) $ then, by \S\ref{SSdessins}, the edges bordering $ F $, with multiplicity, are $ x_1, \blackperm ( x_1 ), \ldots, x_n, \blackperm ( x_n ) $. Thus, $ e $ borders a face $ F $ if and only if its corresponding $ \whiteperm \blackperm $-cycle $ c $ contains an edge $ x $ such that either $ x = e $ or $ \blackperm ( x ) = e $. Since a $ \whiteperm \blackperm $-cycle contains $ x $ if and only if it contains $ \whiteperm \blackperm ( x ) $, and since $ \whiteperm ( \blackperm ( x ) ) = \whiteperm ( e ) $, the first statement is proved. The second statement is immediate from the first.
\end{proof}

\begin{rmk}
Although Lemma \ref{LEMfaceincidence} seems to play a very minor role in the rest of the paper, it was actually the observation that led me towards all the other things.
\end{rmk}

It is well-known that, in the spherical case, deletion of an edge disconnects a graph if and only if the edge borders only one face instead of two. Therefore, I record the following:

\begin{cor}[Edge Deletion] \label{CORedgedeletion}
For $ e \in \edges $, if $ D \setminus e $ is disconnected then $ e $ and $ \whiteperm ( e ) $ represent the same $ \whiteperm \blackperm $-orbit. If $ X = \sphere $ then the converse is true.
\end{cor}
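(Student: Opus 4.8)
The plan is to reduce both directions to Lemma \ref{LEMfaceincidence} together with some careful, but elementary, bookkeeping about how the faces change when $e$ is removed; only the converse will need a genuinely topological input, and only for $X = \sphere$.

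For the forward implication I would argue the contrapositive: if $e$ and $\whiteperm(e)$ represent \emph{different} $\whiteperm\blackperm$-orbits, then $D \setminus e$ is connected. By Lemma \ref{LEMfaceincidence} the hypothesis says $e$ borders two distinct faces $F_1 \neq F_2$. Recall from the proof of Lemma \ref{LEMfaceincidence} (quoting \S\ref{SSdessins}) that a face corresponding to the $\whiteperm\blackperm$-cycle $(x_1,\ldots,x_n)$ has as its boundary the closed walk in $D$ whose edge-sequence is $x_1,\blackperm(x_1),x_2,\blackperm(x_2),\ldots,x_n,\blackperm(x_n)$, with consecutive edges meeting at well-defined vertices that alternate in colour. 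Over all faces, the edge $e$ occurs in exactly two of these sequences: once as an ``$x_i$'' in the boundary walk of its own face, and once as a ``$\blackperm(x_j)$'', namely with $x_j = \blackperm^{-1}(e)$, whose face is that of $\whiteperm\blackperm(x_j) = \whiteperm(e)$. Since $F_1 \neq F_2$, these two occurrences lie in the boundary walks of $F_1$ and of $F_2$ respectively, so $e$ occurs \emph{exactly once} in the boundary walk of $F_1$. Deleting that one step from the (closed) boundary walk of $F_1$ leaves a walk in $D \setminus e$ joining the two endpoints $\white_e$ and $\black_e$ of $e$; since $D$ is connected, every vertex is joined to $\white_e$ by a walk in $D$, and each passage along $e$ can be rerouted through this detour, so $D \setminus e$ is connected.

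For the converse, assume $X = \sphere$, suppose $e$ and $\whiteperm(e)$ represent the same $\whiteperm\blackperm$-orbit — equivalently, by Lemma \ref{LEMfaceincidence}, $e$ borders a single face $F$ — and suppose for contradiction that $D \setminus e$ is connected. Since $D$ has at least one vertex of each colour, $D \setminus e$ has at least two vertices, hence (being connected) at least one edge, so it is a connected nondegenerate graph; by the classical fact that a connected graph embedded in $\sphere$ has only disc complementary regions (cf.\ \cite{GT}), $(D \setminus e, \sphere)$ is again a dessin d'enfant. Now compare face counts: $\sphere \setminus (D \setminus e) = (\sphere \setminus D) \cup e^{\circ}$, where $e^{\circ}$ is the open edge; inspecting a small disc neighbourhood of an interior point of $e$ shows that, because $e$ borders only $F$, the set $F \cup e^{\circ}$ is open and connected and its closure is disjoint from every other face, so the connected components of $\sphere \setminus (D \setminus e)$ are $F \cup e^{\circ}$ together with the remaining faces of $(D,\sphere)$ — the \emph{same} number $f$ of faces as $(D,\sphere)$, and (again by the classical fact) all of them discs. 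With $V$ the unchanged number of vertices, the Euler characteristic formula of \S\ref{SSdessins} then forces $2 = V - (\vert\edges\vert - 1) + f = (V - \vert\edges\vert + f) + 1 = 2 + 1$, a contradiction. Hence $D \setminus e$ is disconnected.

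I expect the routine parts to be the cycle-counting in the forward direction and the Euler count in the converse. The one non-formal ingredient is the classical statement that a connected graph on $\sphere$ has only disc faces — which is exactly the place where the converse fails on higher-genus surfaces, since there a connected graph can leave a handle uncovered — and I would simply cite \cite{GT} for it rather than reprove it. The subtlest bookkeeping point is the claim that deleting a one-face edge $e$ leaves the face count unchanged (that $F \cup e^{\circ}$ is a single region whose closure meets no other face), which is where the small-disc-neighbourhood argument is needed; by contrast, for a two-face edge the same analysis would show the count drops by one, and this is the symmetric fact underlying the forward direction.
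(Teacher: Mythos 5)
Your proof is correct, but it is a genuinely different (and more self-contained) route than the paper's. The paper does not really prove this corollary at all: it obtains it in one line by combining Lemma \ref{LEMfaceincidence} with the cited classical fact that, in the sphere, deleting an edge disconnects the graph if and only if that edge borders a single face. You instead prove both directions. For the forward implication you give a purely combinatorial argument: the closed walk attached to the $\whiteperm\blackperm$-cycle of $e$ (essentially the walk construction of \S\ref{Sarcs}) traverses $e$ exactly once when $\whiteperm(e)$ lies in a different orbit, so deleting that single step yields a detour from $\black_e$ to $\white_e$ avoiding $e$; this needs no topology, and it makes explicit why the forward direction holds on an arbitrary surface, whereas the fact the paper quotes is only stated for the spherical case. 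For the converse you replace the Jordan-curve-style argument implicit in the classical fact by an Euler-characteristic contradiction, using cellularity of connected graphs in $\sphere$ (correctly citing \cite{GT}) plus the bookkeeping that deleting a one-face edge leaves the face count unchanged -- which, incidentally, is the same phenomenon as the $\chi^{\prime} = \chi + 2$ computation the paper carries out later in \S\ref{Sdeletion}. The trade-off: the paper's treatment is a one-liner given the classical bridge/face fact, while yours is longer but essentially self-contained and separates cleanly the combinatorial (any surface) and topological (sphere-only) inputs. Two small remarks: your clause that the closure of $F \cup e^{\circ}$ is ``disjoint from every other face'' is automatic (the other faces are open and disjoint from it); the real content, which your half-disc inspection does supply, is that $F \cup e^{\circ}$ is open, so that the listed sets are exactly the components of $\sphere \setminus (D \setminus e)$. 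And your check that $D \setminus e$ is nondegenerate (needed before invoking the dessin d'enfant Euler formula) is a detail the paper never addresses but is handled correctly.
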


If $ ( D, X ) $ is a dessin d'enfant and $ X \neq \sphere $ then it happens frequently that $ e $ borders only one face and yet $ D \setminus e $ is connected:


\begin{exm}
The following is the trivial dessin d'enfant on $ \torus $:
\begin{center}
\includegraphics[scale=0.4]{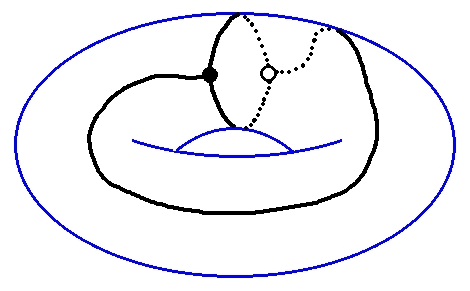}
\end{center}
This dessin d'enfant has only one face, hence every edge borders only one face, but the graph remains connected after deleting any one of them. Of course, the problem is that circuits in $ \sphere $ separate (\emph{Jordan Curve Theorem}), while circuits in $ \torus $ may not.
\end{exm}

It is unclear to me at this time how to characterize these ``disconnecting'' edges when $ X $ is general in a similarly clean way as Corollary \ref{CORedgedeletion}:

\begin{deletionquestionI}
Is there a ``good'' characterization, in terms of the monodromy pair $ ( \whiteperm, \blackperm ) $, of those $ e \in \edges $ such that $ D \setminus e $ is disconnected?
\end{deletionquestionI}

Such a characterization would be valuable for the classification given in \S\ref{Sclassificationbytransitivity} below, especially \S\ref{SSwildexceptional} and \S\ref{SStameexceptional}.

Nonetheless, it is not too difficult to understand deletion at the level of monodromy if the question of connectivity is ignored. Let $ ( G, X ) $ be a dessin d'famille with edges $ \edges $ and monodromy pair $ ( \whiteperm, \blackperm ) $, and let $ e \in \edges $ be arbitrary. Let $ ( \whiteperm^{\prime}, \blackperm^{\prime} ) $ be the monodromy pair of the dessin d'famille $ ( G \setminus e, X ) $. Disjoint cycle decompositions for $ \whiteperm^{\prime} $ and $ \blackperm^{\prime} $ are obtained from those of $ \whiteperm $ and $ \blackperm $ by deleting $ e $ in the obvious way. Because of the important role $ \whiteperm \blackperm $ plays, I give explicit descriptions of $ \whiteperm^{\prime} \blackperm^{\prime} $ also, next. Special treatment, which is annoying but not difficult, is needed if $ \whiteperm ( e ) = e $ or $ \blackperm ( e ) = e $, so assume for convenience that $ \whiteperm ( e ), \blackperm ( e ) \neq e $.

Suppose that $ e $ borders only one face. By Lemma \ref{LEMfaceincidence}, it is equivalent to suppose that $ e $ and $ \whiteperm ( e ) $ represent the same $ \whiteperm \blackperm $-orbit. Let $ x_0, x_1, \ldots, x_m \in \edges $ be the minimal $ \whiteperm \blackperm $-sequence from $ e $ to $ \whiteperm ( e ) $. Since $ \whiteperm ( e ) \neq e $, $ m \geq 1 $. Since also $ \blackperm ( e ) \neq e $, $ m \geq 2 $. It is easy to verify that $ c_0 \defeq ( x_1, \ldots, x_{m-1} ) $ is a $ \whiteperm^{\prime} \blackperm^{\prime} $-cycle. Similarly, let $ y_0, y_1, \ldots, y_n \in \edges $ be the minimal $ \whiteperm \blackperm $-sequence from $ \whiteperm ( e ) $ to $ e $. Since $ \whiteperm ( e ) \neq e $, $ n \geq 1 $. It is easy to verify that $ c_1 \defeq ( y_0, \ldots, y_{n-1} ) $ is a $ \whiteperm^{\prime} \blackperm^{\prime} $-cycle. Any $ \whiteperm \blackperm $-cycle that does not contain $ e $ is also a $ \whiteperm^{\prime} \blackperm^{\prime} $-cycle and $ \whiteperm^{\prime} \blackperm^{\prime} $ is the product of these cycles and $ c_0 $ and $ c_1 $. In particular, if $\chi $ and $ \chi^{\prime} $ are the synthetic Euler characteristics of $ ( \whiteperm, \blackperm ) $ and $ ( \whiteperm^{\prime}, \blackperm^{\prime} ) $ then $ \chi^{\prime} = \chi + 2 $.

Suppose instead that $ e $ borders two faces. By Lemma \ref{LEMfaceincidence}, it is equivalent to suppose that $ e $ and $ \whiteperm ( e ) $ represent different $ \whiteperm \blackperm $-orbits. Let $ x_0, \ldots, x_m \in \edges $ be the $ \whiteperm \blackperm $-sequence from $ e $ to $ e $ such that $ m - 1 $ is the size of the $ \whiteperm \blackperm $-orbit of $ e $ (in particular, $ m \geq 2 $ and $ \{ x_0, \ldots, x_{m-1} \} $ is the $ \whiteperm \blackperm $-orbit of $ e $). Let $ y_0, \ldots, y_n \in \edges $ be the analogous sequence from $ \whiteperm ( e ) $ to $ \whiteperm ( e ) $. It is easy to verify that $ c_1 \defeq ( x_1, \ldots, x_{m-1}, y_0, \ldots, y_{n-1} ) $ is a $ \whiteperm^{\prime} \blackperm^{\prime} $-cycle. Any $ \whiteperm \blackperm $-cycle that contains neither $ e $ nor $ \whiteperm ( e ) $ is also a $ \whiteperm^{\prime} \blackperm^{\prime} $-cycle and $ \whiteperm^{\prime} \blackperm^{\prime} $ is the product of these cycles and $ c_1 $. In particular, $ \chi^{\prime} = \chi $.

\begin{rmk}
Of course, the statements about Euler characteristics are essentially well-known, e.g. Theorem 3.3.5 in \cite{GT}.
\end{rmk}

It is very important to observe that, even in the ``generic'' situation, when $ \whiteperm ( e ), \blackperm ( e ) \neq e $, two very different situations can result in an increased synthetic Euler Characteristic:

\begin{exm} \label{EXMsyneulercharinctwoways}
Let $ ( \whiteperm, \blackperm ) $ be as in Example \ref{EXMwalksfromarcs}. The Euler characteristic is, as expected, $ 0 $. Deletion of edge $ 5 $ results in Example \ref{EXMdessindnichee}, a \emph{disconnected} \emph{toral} dessin d'famille with synthetic Euler characteristic $ 2 $. On the other hand, deletion of any one of edges $ 1, 2, 3 $ results in a connected dessin d'famille with Euler characteristic also $ 2 $.
\end{exm}

\section{The Reroute Operation} \label{Sfundamentaloperation}

\begin{center}
\emph{Throughout this section, $ E $ is a finite set, $ S_E $ is its symmetric group, distinct $ a, b \in E $ are fixed, and $ ( \whiteperm, \blackperm ) $ is an arbitrary pair in $ S_E $. Despite the notation, $ ( \whiteperm, \blackperm ) $ is not assumed to be transitive.}
\end{center}

\subsection{Definition and goal} \label{SSdefofoperation}

\begin{dfn}[Reroute $ \opsymbol $] \label{DFNfundamentaloperation}
Define $ E^{\opsymbol} \defeq \{ a_{\white}, a_{\black} \} \sqcup E \setminus \{ a \} $, where $ a_{\white} $ and $ a_{\black} $ are formal symbols.

Define $ \whiteperm^{\opsymbol} \in S_{E^{\opsymbol}} $ by modifying the disjoint cycle decomposition of $ \whiteperm $ as follows: \emph{Replace $ a $ by the symbol $ a_{\white} $ and insert the symbol $ a_{\black} $ immediately before $ b $ in the cycle of $ \whiteperm $ containing $ b $.}

Define $ \blackperm^{\opsymbol} \in S_{E^{\opsymbol}} $ by modifying the disjoint cycle decomposition of $ \blackperm $ as follows: \emph{Introduce the trivial cycle fixing $ a_{\white} $ and replace $ a $ by the symbol $ a_{\black} $.}

The pair $ ( \whiteperm^{\opsymbol}, \blackperm^{\opsymbol} ) $ is called the \emph{Reroute} of $ ( \whiteperm, \blackperm ) $ relative to $ ( a, b ) $.
\end{dfn}

The relevance of this operation $ \opsymbol $ to the main question is that to perform a conjugation on $ ( \whiteperm, \blackperm ) $ is essentially equivalent to performing a sequence of the operations $ \opsymbol $ for various choices of $ a, b $. An explicit statement of this for transpositions, which is the only case needed in this paper, occurs as Proposition \ref{PRPconjugationviaoperations} (the general case is not so difficult, but is too notationally cumbersome to justify its inclusion).

Although the most elegant definition of $ ( \whiteperm^{\opsymbol}, \blackperm^{\opsymbol} ) $ is that given in Definition \ref{DFNfundamentaloperation} above, it will be convenient to extract some simple facts in the form of a list:

\begin{lem}[Alternate \ref{DFNfundamentaloperation}]
Let the notation be as in Definition \ref{DFNfundamentaloperation} above.

\begin{enumerate}
\setlength{\itemsep}{5pt}

\item \label{op1} If $ \whiteperm ( a ) = a $ then $ \whiteperm^{\opsymbol} ( a_{\white} ) = a_{\white} $ but
\begin{itemize}
\item[] otherwise $ \whiteperm ( x ) = a $ implies $ \whiteperm^{\opsymbol} ( x ) = a_{\white} $, and $ \whiteperm^{\opsymbol} ( a_{\white} ) = \whiteperm ( a ) $.
\end{itemize}

\item \label{op2} If $ \whiteperm ( a ) = b $ then $ \whiteperm^{\opsymbol} ( a_{\white} ) = a_{\black} $ but
\begin{itemize}
\item[] otherwise $ \whiteperm ( x ) = b $ implies $ \whiteperm^{\opsymbol} ( x ) = a_{\black} $.
\end{itemize}

\item \label{op3} $ \whiteperm^{\opsymbol} ( a_{\black} ) = b $.

\item \label{op4} If $ x \neq a $ and $ \whiteperm ( x ) \neq a, b $ then $ \whiteperm^{\opsymbol} ( x ) = \whiteperm ( x ) $.

\item \label{op5} $ \blackperm^{\opsymbol} ( a_{\white} ) = a_{\white} $.

\item \label{op6} If $ \blackperm ( a ) = a $ then $ \blackperm^{\opsymbol} ( a_{\black} ) = a_{\black} $ but \begin{itemize}
\item[] otherwise $ \blackperm ( x ) = a $ implies $ \blackperm^{\opsymbol} ( x ) = a_{\black} $, and $ \blackperm^{\opsymbol} ( a_{\black} ) = \blackperm ( a ) $.
\end{itemize}

\item \label{op7} If $ x \neq a $ and $ \blackperm ( x ) \neq a $ then $ \blackperm^{\opsymbol} ( x ) = \blackperm ( x ) $.
\end{enumerate}
\end{lem}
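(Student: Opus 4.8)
The plan is to prove each of the seven assertions by directly unwinding Definition \ref{DFNfundamentaloperation}, converting its ``cycle surgery'' recipe into the equivalent description of $\whiteperm^{\opsymbol}$ and $\blackperm^{\opsymbol}$ as functions. The essential observation is that the recipe consists of purely \emph{local} moves on disjoint cycle decompositions: to form $\whiteperm^{\opsymbol}$ one renames the point $a$ to $a_\white$ and splices the new symbol $a_\black$ into the cycle of $b$ directly before $b$; to form $\blackperm^{\opsymbol}$ one renames $a$ to $a_\black$ and adjoins $a_\white$ as a new fixed point. Each such move alters only one or two ``arrows'' $x \mapsto \whiteperm(x)$ (or $x \mapsto \blackperm(x)$), so once I identify exactly which arrows are disturbed, every other arrow is automatically preserved; that preservation is the content of (\ref{op4}) and (\ref{op7}).

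For $\whiteperm^{\opsymbol}$, renaming $a$ to $a_\white$ affects only the arrow into $a$ --- its source now points to $a_\white$ --- and the arrow out of $a$, now issued from $a_\white$; splicing $a_\black$ before $b$ affects only the arrow into $b$ --- its former source now points to $a_\black$ --- and introduces the new arrow $a_\black \mapsto b$. I would first treat the generic configuration $\whiteperm(a) \notin \{a, b\}$, in which these four arrows are pairwise distinct, reading off directly that $\whiteperm^{\opsymbol}(x) = a_\white$ when $\whiteperm(x) = a$ and $\whiteperm^{\opsymbol}(a_\white) = \whiteperm(a)$ (this is (\ref{op1})), that $\whiteperm^{\opsymbol}(x) = a_\black$ when $\whiteperm(x) = b$ (this is (\ref{op2})), and that $\whiteperm^{\opsymbol}(a_\black) = b$ (this is (\ref{op3}), which in fact needs no genericity and holds even when the cycle of $b$ is trivial, since a trivial cycle $(b)$ becomes $(a_\black, b)$). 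Then I would dispatch the two coincidences: if $\whiteperm(a) = a$, the renamed fixed point gives $\whiteperm^{\opsymbol}(a_\white) = a_\white$; if $\whiteperm(a) = b$, then $a$ is itself the predecessor of $b$, and after the rename-then-splice the symbol $a_\white$ is immediately followed by $a_\black$, so $\whiteperm^{\opsymbol}(a_\white) = a_\black$ --- which is precisely why (\ref{op2}) lists this equality as its first clause and overrides the formula in (\ref{op1}). The assertions (\ref{op5}) and (\ref{op6}) for $\blackperm^{\opsymbol}$ follow from the same analysis with only a single rename plus one adjoined fixed point and no splice, hence are strictly easier: (\ref{op5}) is immediate from the adjoined trivial cycle, and (\ref{op6}) separates the fixed-point case $\blackperm(a) = a$ from the generic case exactly as above.

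The closest thing to an obstacle here is not mathematical depth but making the case distinctions airtight --- in particular being explicit that (\ref{op1}) and (\ref{op2}) are to be read with (\ref{op2}) taking precedence in the single overlap $\whiteperm(a) = b$, and checking that the two white-side moves really are independent outside that overlap even when $b$'s cycle is trivial or when $\whiteperm^{-1}(b) = a$. I would state this reading explicitly in one sentence and then present the verification compactly, since once the bookkeeping is organized every remaining step is a one-line inspection of the modified cycle.
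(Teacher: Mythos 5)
Your proposal is correct and takes essentially the same approach as the paper: the paper's entire proof is the one-line remark that the assertions are clear from Definition \ref{DFNfundamentaloperation}, and your argument simply makes that direct unwinding of the cycle surgery explicit, including the sensible precedence reading of (\ref{op1})/(\ref{op2}) in the single overlap $ \whiteperm ( a ) = b $. Nothing further is needed.
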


\begin{proof}
This is clear from the definitions of $ \whiteperm^{\opsymbol}, \blackperm^{\opsymbol} $.
\end{proof}

The operation $ \opsymbol $ is the group-theoretic manifestation of the following picture:
\begin{center}
\begin{figure}[h]
\includegraphics[scale=0.4]{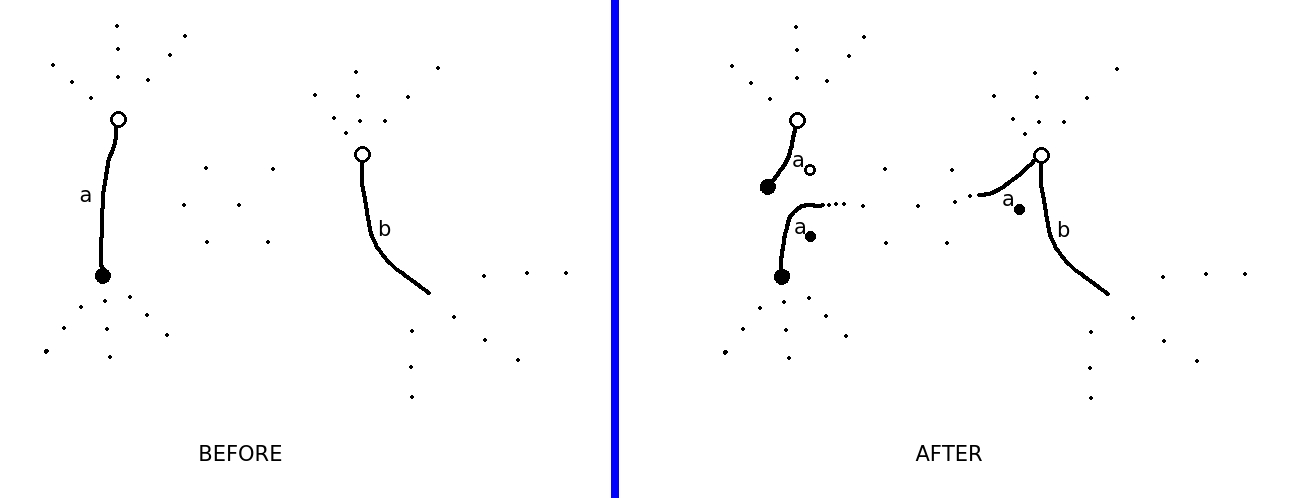}
\caption{\footnotesize{[Depiction of $ \opsymbol $] The purpose of the seemingly useless new edge $ a_{\white} $ is twofold: it guarantees that a degenerate model is never produced from a nondegenerate one, and it serves as a sort of ``bookmark'', recording where edge ``a'' used to be for future applications of the $ \opsymbol $ operation...}}
\label{FIGoperation}
\end{figure}
\end{center}

The goal of this section is to determine the exact relationship of $ \numcycles ( \whiteperm^{\opsymbol} \blackperm^{\opsymbol} ) $ to $ \numcycles ( \whiteperm \blackperm ) $. This relationship, the Reroute Theorem \ref{THMgenus} below, and its proof are the technical foundation of the paper.

One reason why this relationship is important is that it predicts \emph{genus}:

\begin{lem}[Genus Change] \label{LEMchardiff}
Let $ ( \whiteperm^{\opsymbol}, \blackperm^{\opsymbol} ) $ be the reroute of $ ( \whiteperm, \blackperm ) $ relative to $ ( a, b ) $. \Assertion For $ \chi $ and $ \chi^{\opsymbol} $ the synthetic Euler characteristics of $ ( \whiteperm, \blackperm ) $ and $ ( \whiteperm^{\opsymbol}, \blackperm^{\opsymbol} ) $,
\begin{equation*}
\chi - \chi^{\opsymbol} = \numcycles ( \whiteperm \blackperm ) - \numcycles ( \whiteperm^{\opsymbol} \blackperm^{\opsymbol} )
\end{equation*}
\end{lem}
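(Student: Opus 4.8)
The plan is to compute $\chi$ and $\chi^{\opsymbol}$ term by term and observe that the only term that is not manifestly invariant under $\opsymbol$ is the one counting orbits of $\whiteperm\blackperm$. Recall the definition
\begin{equation*}
\chi_{(\whiteperm,\blackperm)} = \numcycles(\whiteperm) + \numcycles(\blackperm) - \vert E \vert + \numcycles(\whiteperm\blackperm),
\end{equation*}
and similarly for $\chi^{\opsymbol}$ with $E$ replaced by $E^{\opsymbol}$. First I would handle the cardinality: by Definition \ref{DFNfundamentaloperation}, $E^{\opsymbol} = \{a_{\white},a_{\black}\} \sqcup (E \setminus \{a\})$, so $\vert E^{\opsymbol} \vert = \vert E \vert + 1$. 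Thus the $-\vert E\vert$ terms contribute a net $-1$ to $\chi - \chi^{\opsymbol}$; I need two further $+1$'s from elsewhere to make the left side reduce to $\numcycles(\whiteperm\blackperm) - \numcycles(\whiteperm^{\opsymbol}\blackperm^{\opsymbol})$, and these will come from $\numcycles(\whiteperm^{\opsymbol})$ and $\numcycles(\blackperm^{\opsymbol})$.

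Next I would compare $\numcycles(\blackperm)$ with $\numcycles(\blackperm^{\opsymbol})$. From the definition, $\blackperm^{\opsymbol}$ is obtained from $\blackperm$ by renaming $a$ to $a_{\black}$ (which does not change the cycle structure) and then adjoining a brand-new trivial cycle fixing $a_{\white}$ (items (\ref{op5}),(\ref{op6}),(\ref{op7}) of Lemma \ref{DFNfundamentaloperation}). Hence $\numcycles(\blackperm^{\opsymbol}) = \numcycles(\blackperm) + 1$. Then I would compare $\numcycles(\whiteperm)$ with $\numcycles(\whiteperm^{\opsymbol})$: by the definition, $\whiteperm^{\opsymbol}$ is built from $\whiteperm$ by relabeling $a$ as $a_{\white}$ and \emph{inserting} the new symbol $a_{\black}$ immediately before $b$ in whichever cycle of $\whiteperm$ contains $b$. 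Relabeling changes nothing; inserting one new point into an existing cycle does not change the number of cycles either (it lengthens one cycle by $1$). So $\numcycles(\whiteperm^{\opsymbol}) = \numcycles(\whiteperm)$. Adding up: $\chi - \chi^{\opsymbol} = \big(\numcycles(\whiteperm) - \numcycles(\whiteperm^{\opsymbol})\big) + \big(\numcycles(\blackperm) - \numcycles(\blackperm^{\opsymbol})\big) - \big(\vert E\vert - \vert E^{\opsymbol}\vert\big) + \big(\numcycles(\whiteperm\blackperm) - \numcycles(\whiteperm^{\opsymbol}\blackperm^{\opsymbol})\big) = 0 + (-1) - (-1) + \big(\numcycles(\whiteperm\blackperm) - \numcycles(\whiteperm^{\opsymbol}\blackperm^{\opsymbol})\big)$, which is exactly the claimed identity.

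The only place that requires a little care — and the step I expect to be the main (mild) obstacle — is verifying that $\numcycles(\whiteperm^{\opsymbol}) = \numcycles(\whiteperm)$ in the degenerate edge cases, namely when $\whiteperm(a) = a$ (so $a$ sits in a trivial cycle of $\whiteperm$) or when $a$ and $b$ already lie in the same $\whiteperm$-cycle. In the first case the description ``replace $a$ by $a_{\white}$'' turns a fixed point into a fixed point, and the insertion of $a_{\black}$ before $b$ still merely enlarges the $b$-cycle; in the second case one should check that inserting $a_{\black}$ before $b$ does not accidentally split or merge cycles. Both are immediate from the explicit formulas in Lemma \ref{DFNfundamentaloperation}: items (\ref{op1})--(\ref{op4}) show $\whiteperm^{\opsymbol}$ acts on $E^{\opsymbol}$ as a single bijection whose cycle through $a_{\white}$ is obtained from the old $\whiteperm$-cycle through $a$ by relabeling, and whose cycle through $b$ is obtained from the old $\whiteperm$-cycle through $b$ by splicing in $a_{\black}$ — with these being the same cycle exactly when $a,b$ were $\whiteperm$-equivalent, in which case one checks directly that no net change in cycle count occurs. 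Once these bookkeeping checks are in place the proof is complete; I would present it as a short case-free computation citing Lemma \ref{DFNfundamentaloperation} for the cycle counts of $\whiteperm^{\opsymbol}$ and $\blackperm^{\opsymbol}$.
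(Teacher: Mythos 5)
Your proposal is correct and is essentially the paper's own argument: the paper's proof consists precisely of the three observations $\numcycles(\whiteperm^{\opsymbol}) = \numcycles(\whiteperm)$, $\numcycles(\blackperm^{\opsymbol}) = \numcycles(\blackperm)+1$, and $\vert E^{\opsymbol}\vert = \vert E\vert + 1$, which you verify (with a bit more bookkeeping detail) directly from Definition \ref{DFNfundamentaloperation}. Nothing is missing; the edge-case checks you flag are harmless and immediate, just as you say.
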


Equivalently, if $ g $ is the synthetic genus of $ ( \whiteperm, \blackperm ) $ and $ g^{\opsymbol} $ is that of $ ( \whiteperm^{\opsymbol}, \blackperm^{\opsymbol} ) $ then
\begin{equation*}
g^{\opsymbol} - g = \frac{ \numcycles ( \whiteperm \blackperm ) - \numcycles ( \whiteperm^{\opsymbol} \blackperm^{\opsymbol} ) }{2}
\end{equation*}

\begin{proof}
This is obvious: $ \numcycles ( \whiteperm^{\opsymbol} ) = \numcycles ( \whiteperm ) $, $ \numcycles ( \blackperm^{\opsymbol} ) = \numcycles ( \blackperm ) + 1 $, $ \vert E^{\opsymbol} \vert = \vert E \vert + 1 $.
\end{proof}

\subsection{Comparison of orbits}

\begin{dfn}[Unbiased] \label{DFNunbiasedorbits}
An $ \whiteperm \blackperm $-orbit $ O $ is \emph{unbiased} iff $ O \cap \{ a, \whiteperm ( a ), b \} = \emptyset $.
\end{dfn}

The previous definition is justified by:

\begin{lem}[Unbiased Orbits] \label{LEMunbiasedorbitsalso}
If $ x \in E $, $ x \neq a $ and $ \whiteperm \blackperm ( x ) \notin \{ a, \whiteperm ( a ), b \} $ then $ x \in E^{\opsymbol} $ and $ \whiteperm^{\opsymbol} \blackperm^{\opsymbol} ( x ) = \whiteperm \blackperm ( x ) $. In particular, if $ O $ is an unbiased $ \whiteperm \blackperm $-orbit then $ O $ is also an $ \whiteperm^{\opsymbol} \blackperm^{\opsymbol} $-orbit.
\end{lem}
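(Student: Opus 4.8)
The statement is essentially a bookkeeping exercise with the Alternate form of Definition \ref{DFNfundamentaloperation}, so the plan is to compute $\whiteperm^{\opsymbol}\blackperm^{\opsymbol}(x)$ directly by applying $\blackperm^{\opsymbol}$ and then $\whiteperm^{\opsymbol}$, tracking which clause of the Alternate Lemma governs each step, and to observe that the hypothesis $\whiteperm\blackperm(x)\notin\{a,\whiteperm(a),b\}$ is exactly what is needed to stay out of the ``degenerate'' branches. The membership $x\in E^{\opsymbol}$ is immediate: $x\neq a$, so $x\in E\setminus\{a\}\subseteq E^{\opsymbol}$ by the definition of $E^{\opsymbol}$.

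\textbf{The $\blackperm^{\opsymbol}$ step.} First I would show $\blackperm(x)\neq a$. If instead $\blackperm(x)=a$, then $\whiteperm\blackperm(x)=\whiteperm(a)\in\{a,\whiteperm(a),b\}$, contradicting the hypothesis; hence $\blackperm(x)\neq a$. Now $x\neq a$ and $\blackperm(x)\neq a$, so clause (\ref{op7}) of the Alternate Lemma gives $\blackperm^{\opsymbol}(x)=\blackperm(x)$. Write $y\defeq\blackperm(x)$, and note $y\neq a$.

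\textbf{The $\whiteperm^{\opsymbol}$ step.} We have $\whiteperm(y)=\whiteperm\blackperm(x)$, which by hypothesis lies outside $\{a,\whiteperm(a),b\}$; in particular $\whiteperm(y)\neq a$ and $\whiteperm(y)\neq b$. Since also $y\neq a$, clause (\ref{op4}) gives $\whiteperm^{\opsymbol}(y)=\whiteperm(y)$. Combining, $\whiteperm^{\opsymbol}\blackperm^{\opsymbol}(x)=\whiteperm^{\opsymbol}(y)=\whiteperm(y)=\whiteperm\blackperm(x)$, which is the first assertion.

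\textbf{The orbit statement.} For the ``in particular'' claim, let $O$ be an unbiased $\whiteperm\blackperm$-orbit, so $O\cap\{a,\whiteperm(a),b\}=\emptyset$. Any $x\in O$ then satisfies $x\neq a$, and $\whiteperm\blackperm(x)\in O$ so $\whiteperm\blackperm(x)\notin\{a,\whiteperm(a),b\}$; hence the first part applies and $\whiteperm^{\opsymbol}\blackperm^{\opsymbol}(x)=\whiteperm\blackperm(x)$ for every $x\in O$. Thus $\whiteperm^{\opsymbol}\blackperm^{\opsymbol}$ maps $O$ into $O$; being injective with $O$ finite, it restricts to a permutation of $O$ which coincides with the restriction of $\whiteperm\blackperm$, and that restriction is a single cycle. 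Therefore $O$ is a single $\whiteperm^{\opsymbol}\blackperm^{\opsymbol}$-orbit. The only point demanding any care is recognizing that the hypothesis does double duty — it forces $\blackperm(x)\neq a$ (so the $\blackperm^{\opsymbol}$ step is the ``generic'' one) and simultaneously forces $\whiteperm(\blackperm(x))\notin\{a,b\}$ (so the $\whiteperm^{\opsymbol}$ step is the ``generic'' one) — together with the small finiteness/injectivity remark needed to pass from pointwise agreement on $O$ to $O$ being one orbit; neither presents a genuine obstacle.
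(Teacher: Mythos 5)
Your proposal is correct and follows essentially the same route as the paper: you deduce $\blackperm(x)\neq a$ from the hypothesis, apply clause (\ref{op7}) to get $\blackperm^{\opsymbol}(x)=\blackperm(x)$, then apply clause (\ref{op4}) to get $\whiteperm^{\opsymbol}(\blackperm(x))=\whiteperm(\blackperm(x))$, exactly as in the paper's proof. The only difference is that you spell out the finiteness/injectivity argument for the orbit statement, which the paper dismisses as immediate; that added detail is accurate and harmless.
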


\begin{proof}
It is trivial that $ x \in E^{\opsymbol} $. It is immediate from the hypotheses that $ \blackperm ( x ) \neq a $, so the definition (\ref{op7}) of $ \blackperm^{\opsymbol} $ implies that $ \blackperm^{\opsymbol} ( x ) = \blackperm ( x ) $. The hypotheses further imply that $ \whiteperm ( \blackperm ( x ) ) \neq a, b $, so the definition (\ref{op4}) of $ \whiteperm^{\opsymbol} $ implies that $ \whiteperm^{\opsymbol} ( \blackperm ( x ) ) = \whiteperm ( \blackperm ( x ) ) $. Combining the two equalities yields the first claim. The second claim is immediate from the first.
\end{proof}

On the other hand,

\begin{dfn}[Biased] \label{DFNbiasedorbits}
An $ \whiteperm^{\opsymbol} \blackperm^{\opsymbol} $-orbit $ O^{\opsymbol} $ is \emph{biased} iff it is not also an unbiased $ \whiteperm \blackperm $-orbit (see Lemma \ref{LEMunbiasedorbitsalso}).
\end{dfn}

The previous definition is justified by:

\begin{lem}[Biased Orbits] \label{LEMbiasedorbitscontain}
If $ O^{\opsymbol} $ is a biased $ \whiteperm^{\opsymbol} \blackperm^{\opsymbol} $-orbit then $ O^{\opsymbol} $ contains at least one of $ a_{\white}, a_{\black}, b $.
\end{lem}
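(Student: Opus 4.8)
The plan is to prove the contrapositive of Lemma \ref{LEMbiasedorbitscontain}: I will show that an $ \whiteperm^{\opsymbol} \blackperm^{\opsymbol} $-orbit $ O^{\opsymbol} $ containing none of $ a_{\white}, a_{\black}, b $ is necessarily an \emph{unbiased} $ \whiteperm \blackperm $-orbit, hence by Definition \ref{DFNbiasedorbits} is not biased. So fix such an $ O^{\opsymbol} $. Since $ E^{\opsymbol} = \{ a_{\white}, a_{\black} \} \sqcup ( E \setminus \{ a \} ) $ and $ O^{\opsymbol} $ omits the two formal symbols, every $ x \in O^{\opsymbol} $ lies in $ E \setminus \{ a \} $; moreover $ x \neq b $ by hypothesis.

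First I would pin down the action of $ \whiteperm^{\opsymbol} \blackperm^{\opsymbol} $ on such an $ x $, the key device being that $ O^{\opsymbol} $ is $ \whiteperm^{\opsymbol} \blackperm^{\opsymbol} $-stable, so $ \whiteperm^{\opsymbol} \blackperm^{\opsymbol} ( x ) $ must again avoid $ a_{\white}, a_{\black}, b $. Using items (\ref{op1})--(\ref{op7}): if $ \blackperm ( x ) = a $ then $ \blackperm ( a ) \neq a $ by injectivity (as $ x \neq a $), so (\ref{op6}) gives $ \blackperm^{\opsymbol} ( x ) = a_{\black} $ and then (\ref{op3}) gives $ \whiteperm^{\opsymbol} ( a_{\black} ) = b $, putting $ b $ in $ O^{\opsymbol} $ --- impossible. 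Hence $ \blackperm ( x ) \neq a $, so (\ref{op7}) yields $ \blackperm^{\opsymbol} ( x ) = \blackperm ( x ) $; write $ y \defeq \blackperm ( x ) \neq a $. Next, if $ \whiteperm ( y ) = a $ then (\ref{op1}) gives $ \whiteperm^{\opsymbol} ( y ) = a_{\white} \in O^{\opsymbol} $, and if $ \whiteperm ( y ) = b $ then (\ref{op2}) gives $ \whiteperm^{\opsymbol} ( y ) = a_{\black} \in O^{\opsymbol} $ --- both impossible. (In these sub-cases one must first note that the ``otherwise'' clauses of (\ref{op1}), (\ref{op2}), (\ref{op6}) genuinely apply, which again follows from injectivity together with $ x \neq a $ and $ y \neq a $.) Therefore $ \whiteperm ( y ) \neq a, b $ and (\ref{op4}) gives $ \whiteperm^{\opsymbol} ( y ) = \whiteperm ( y ) $, i.e. $ \whiteperm^{\opsymbol} \blackperm^{\opsymbol} ( x ) = \whiteperm \blackperm ( x ) $. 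Along the way I have also obtained $ \whiteperm \blackperm ( x ) \neq a, b $, and $ \whiteperm \blackperm ( x ) \neq \whiteperm ( a ) $ follows from $ \blackperm ( x ) \neq a $ and injectivity of $ \whiteperm $; thus $ \whiteperm \blackperm ( x ) \notin \{ a, \whiteperm ( a ), b \} $.

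To finish: $ \whiteperm^{\opsymbol} \blackperm^{\opsymbol} $ restricted to the finite set $ O^{\opsymbol} \subseteq E $ agrees with $ \whiteperm \blackperm $ and is a single cycle, so $ O^{\opsymbol} $ is $ \whiteperm \blackperm $-stable with $ \whiteperm \blackperm $ acting on it as a single cycle --- that is, $ O^{\opsymbol} $ is an $ \whiteperm \blackperm $-orbit. Since an orbit equals its image under the permutation, every element of $ O^{\opsymbol} $ has the form $ \whiteperm \blackperm ( x ) $ with $ x \in O^{\opsymbol} $, and each such value avoids $ \{ a, \whiteperm ( a ), b \} $; hence $ O^{\opsymbol} \cap \{ a, \whiteperm ( a ), b \} = \emptyset $, i.e. $ O^{\opsymbol} $ is unbiased in the sense of Definition \ref{DFNunbiasedorbits}. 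This is exactly the contrapositive of the Lemma.

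I expect the only real obstacle to be bookkeeping: tracking the degenerate possibilities $ \whiteperm ( a ) = a $, $ \whiteperm ( a ) = b $, $ \blackperm ( a ) = a $ so that the correct clause of each list item is invoked, and checking each time that the ``stays inside $ O^{\opsymbol} $'' argument is applicable. There is no conceptual content beyond a disciplined case chase through (\ref{op1})--(\ref{op7}).
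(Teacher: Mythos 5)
Your proof is correct, but it follows a genuinely different route from the paper's. The paper argues the statement directly: from an element $ x $ of a biased orbit with $ x \neq a_{\white}, a_{\black}, b, \whiteperm ( a ) $, it uses the definition of ``biased'' together with Lemma \ref{LEMunbiasedorbitsalso} to produce a minimal (hence strict) $ \whiteperm \blackperm $-sequence from $ x $ to one of $ a, \whiteperm ( a ), b $, transfers its interior to an $ \whiteperm^{\opsymbol} \blackperm^{\opsymbol} $-sequence, and finishes with a three-way terminal case analysis showing that the transferred sequence lands on one of $ a_{\white}, a_{\black}, b $. You instead prove the contrapositive by a purely local computation: for an $ \whiteperm^{\opsymbol} \blackperm^{\opsymbol} $-orbit $ O^{\opsymbol} $ avoiding $ a_{\white}, a_{\black}, b $, stability of $ O^{\opsymbol} $ rules out $ \blackperm ( x ) = a $ and $ \whiteperm ( \blackperm ( x ) ) \in \{ a, b \} $ for every $ x \in O^{\opsymbol} $ (each would force $ b $, $ a_{\white} $ or $ a_{\black} $ into the orbit), so $ \whiteperm^{\opsymbol} \blackperm^{\opsymbol} $ agrees with $ \whiteperm \blackperm $ on $ O^{\opsymbol} $ and the image avoids $ \{ a, \whiteperm ( a ), b \} $; since $ \whiteperm \blackperm ( O^{\opsymbol} ) = O^{\opsymbol} $, the orbit is unbiased. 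Your version dispenses with sequences and minimality altogether and in effect proves the converse of Lemma \ref{LEMunbiasedorbitsalso} (the $ \whiteperm^{\opsymbol} \blackperm^{\opsymbol} $-orbits avoiding $ a_{\white}, a_{\black}, b $ are precisely the unbiased $ \whiteperm \blackperm $-orbits), which is all Proposition \ref{PRPorbitcounting} actually needs; what the paper's longer route buys is that its sequence-transfer computation and terminal case analysis serve as a warm-up and template for the Orbit Transfer Lemma \ref{LEMorbittransfer} that immediately follows. The injectivity checks you flag --- e.g.\ $ \blackperm ( x ) = a $ with $ x \neq a $ forces $ \blackperm ( a ) \neq a $, so the ``otherwise'' clauses of (\ref{op1}), (\ref{op2}), (\ref{op6}) genuinely apply --- are exactly the right ones, so the bookkeeping you anticipate is indeed the only remaining content.
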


The style of argument here will be repeated many times throughout the rest of the paper. Note that $ a_{\white}, a_{\black}, b $ are distinct by definition, but that $ a, \whiteperm ( a ), b $ may not be. The possibility that $ \whiteperm ( a ) \in \{ a, b \} $ will require special cases to be treated in most of the proofs below. The first example of such a proof is this one.

\begin{proof}
Let $ O^{\opsymbol} $ be a biased $ \whiteperm^{\opsymbol} \blackperm^{\opsymbol} $-orbit and let $ x \in O^{\opsymbol} $ be arbitrary. Note that $ x \neq a $ because $ a \notin E^{\opsymbol} $. By nature of the claim, I can also assume that $ x \neq a_{\white}, a_{\black}, b $. Finally, I can assume that $ x \neq \whiteperm ( a ) $: if $ x = \whiteperm ( a ) $ then necessarily $ \whiteperm ( a ) \neq a $ and so the definitions (\ref{op1}) (\ref{op5}) of $ \whiteperm^{\opsymbol}, \blackperm^{\opsymbol} $ imply that $ \whiteperm^{\opsymbol} \blackperm^{\opsymbol} ( a_{\white} ) = \whiteperm ( a ) = x $, as desired. In summary, I can assume $ x \in E $ and $ x \neq a, \whiteperm ( a ), b $.

By definition of ``biased'' and Lemma \ref{LEMunbiasedorbitsalso}, the $ \whiteperm \blackperm $-orbit of $ x $ contains at least one of $ a, \whiteperm ( a ), b $. Among all $ \whiteperm \blackperm $-sequences from $ x $ to one of $ a, \whiteperm ( a ), b $, let $ x_0, x_1, \ldots, x_n $ be the one with minimal length. By the previous paragraph, this sequence is not a singleton ($ n \geq 1 $). By minimality, $ x_i \neq a, \whiteperm ( a ), b $ for all $ 0 < i < n $.

By using Lemma \ref{LEMunbiasedorbitsalso} repeatedly, $ x_0, x_1, \ldots, x_{n-1} $ is a $ \whiteperm^{\opsymbol} \blackperm^{\opsymbol} $-sequence. To complete the proof, I show that $ \whiteperm^{\opsymbol} \blackperm^{\opsymbol} ( x_{n-1} ) \in \{ a_{\white}, a_{\black}, b \} $. This will be done for each of the three possibilities for $ x_n $.

Suppose first that $ x_n = \whiteperm ( a ) $. Since $ \whiteperm \blackperm ( x_{n-1} ) = x_n $, it follows that $ \blackperm ( x_{n-1} ) = a $. Since $ x_{n-1} \neq a $ by the first/second paragraph, definition (\ref{op6}) of $ \blackperm^{\opsymbol} $ says that $ \blackperm^{\opsymbol} ( x_{n-1} ) = a_{\black} $. Definition (\ref{op3}) of $ \whiteperm^{\opsymbol} $ then says that $ \whiteperm^{\opsymbol} \blackperm^{\opsymbol} ( x_{n-1} ) = b $, as desired.

Before treating the other two cases, it will be efficient to make a comment. By the previous paragraph, I can assume that $ x_n \neq \whiteperm ( a ) $. Since $ \whiteperm \blackperm ( x_{n-1} ) = x_n $, it follows that $ \blackperm ( x_{n-1} ) \neq a $. Since it is known from the first/second paragraph that $ x_{n-1} \neq a $, definition (\ref{op7}) of $ \blackperm^{\opsymbol} $ says that $ \blackperm^{\opsymbol} ( x_{n-1} ) = \blackperm ( x_{n-1} ) $. Thus, to prove the claim for the remaining two cases it suffices merely to show that $ \whiteperm^{\opsymbol} ( \blackperm ( x_{n-1} ) ) \in \{ a_{\white}, a_{\black}, b \} $.

Suppose now that $ x_n = b $. Since $ \whiteperm ( \blackperm ( x_{n-1} ) ) = x_n $ and $ \whiteperm ( a ) \neq x_n = b $, definition (\ref{op2}) of $ \whiteperm^{\opsymbol} $ says that $ \whiteperm^{\opsymbol} ( \blackperm ( x_{n-1} ) ) = a_{\black} $.

Suppose finally that $ x_n = a $. Since $ \whiteperm ( \blackperm ( x_{n-1} ) ) = x_n $ and $ \whiteperm ( a ) \neq x_n = a $, definition (\ref{op1}) of $ \whiteperm^{\opsymbol} $ says that $ \whiteperm^{\opsymbol} ( \blackperm ( x_{n-1} ) ) = a_{\white} $.
\end{proof}

The relevance of all this to the determination of the relationship between $ \numcycles ( \whiteperm \blackperm ) $ and $ \numcycles ( \whiteperm^{\opsymbol} \blackperm^{\opsymbol} ) $ is clear:

\begin{prp}[Orbit Counting] \label{PRPorbitcounting}
Let $ U $ be the number of unbiased $ \whiteperm \blackperm $-orbits, let $ B $ be the number of $ \whiteperm \blackperm $-orbits containing at least one of $ a, \whiteperm ( a ), b $, and let $ B^{\opsymbol} $ be the number of $ \whiteperm^{\opsymbol} \blackperm^{\opsymbol} $-orbits containing at least one of $ a_{\white}, a_{\black}, b $. \Assertion $ \numcycles ( \whiteperm \blackperm ) = U + B $ and $ \numcycles ( \whiteperm^{\opsymbol} \blackperm^{\opsymbol} ) = U + B^{\opsymbol} $.
\end{prp}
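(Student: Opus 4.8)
The plan is to argue purely by partitioning the two orbit sets, using only the two Lemmas just established. For the first equality there is nothing to do beyond an observation: the $ \whiteperm \blackperm $-orbits partition $ E $, and since $ a, \whiteperm ( a ), b \in E $, each $ \whiteperm \blackperm $-orbit either is disjoint from $ \{ a, \whiteperm ( a ), b \} $ — i.e. is unbiased in the sense of Definition \ref{DFNunbiasedorbits} — or meets $ \{ a, \whiteperm ( a ), b \} $. These two classes are disjoint and together exhaust all $ \whiteperm \blackperm $-orbits, so $ \numcycles ( \whiteperm \blackperm ) = U + B $ with no further work.

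For the second equality I would partition the $ \whiteperm^{\opsymbol} \blackperm^{\opsymbol} $-orbits into the biased ones (Definition \ref{DFNbiasedorbits}) and the non-biased ones. By the very definition of ``biased'', a non-biased $ \whiteperm^{\opsymbol} \blackperm^{\opsymbol} $-orbit is precisely an unbiased $ \whiteperm \blackperm $-orbit; conversely, Lemma \ref{LEMunbiasedorbitsalso} shows that every unbiased $ \whiteperm \blackperm $-orbit is again an $ \whiteperm^{\opsymbol} \blackperm^{\opsymbol} $-orbit, and it is necessarily non-biased. Hence the non-biased $ \whiteperm^{\opsymbol} \blackperm^{\opsymbol} $-orbits are exactly the unbiased $ \whiteperm \blackperm $-orbits, of which there are $ U $.

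It then remains to identify the biased $ \whiteperm^{\opsymbol} \blackperm^{\opsymbol} $-orbits with those containing at least one of $ a_{\white}, a_{\black}, b $, so that they number $ B^{\opsymbol} $. One inclusion is exactly Lemma \ref{LEMbiasedorbitscontain}. For the reverse inclusion I would note that $ a_{\white}, a_{\black} \notin E $, while every unbiased $ \whiteperm \blackperm $-orbit is a subset of $ E $ that does not contain $ b $ (by Definition \ref{DFNunbiasedorbits}); therefore an $ \whiteperm^{\opsymbol} \blackperm^{\opsymbol} $-orbit containing any of $ a_{\white}, a_{\black}, b $ cannot be an unbiased $ \whiteperm \blackperm $-orbit and so is biased. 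Combining the count of non-biased orbits with the count of biased ones gives $ \numcycles ( \whiteperm^{\opsymbol} \blackperm^{\opsymbol} ) = U + B^{\opsymbol} $.

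There is essentially no obstacle in this Proposition; all the content sits in Lemmas \ref{LEMunbiasedorbitsalso} and \ref{LEMbiasedorbitscontain}, and the statement is just the bookkeeping that packages them for later orbit-counting. The only point deserving a moment's care is the reverse inclusion in the preceding paragraph — checking that an orbit meeting $ \{ a_{\white}, a_{\black}, b \} $ is forced to be biased — which reduces to the trivial facts that $ a_{\white}, a_{\black} $ are not elements of $ E $ and that ``unbiased'' forbids $ b $ by definition.
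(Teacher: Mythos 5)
Your proposal is correct and follows the same route as the paper, which simply declares the result immediate from Definitions \ref{DFNunbiasedorbits}/\ref{DFNbiasedorbits} and Lemmas \ref{LEMunbiasedorbitsalso}/\ref{LEMbiasedorbitscontain}; you have merely written out the bookkeeping (the partition into unbiased/biased orbits and the easy reverse inclusion using $a_{\white}, a_{\black} \notin E$) that the paper leaves implicit.
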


\begin{proof}
This is immediate from Definitions \ref{DFNunbiasedorbits}/\ref{DFNbiasedorbits} and Lemmas \ref{LEMunbiasedorbitsalso}/\ref{LEMbiasedorbitscontain}.
\end{proof}

Thus, the problem is to calculate the difference $ B^{\opsymbol} - B $. The following can be used to calculate the difference $ B^{\opsymbol} - B $, and more:

\begin{orbittransfer} \label{LEMorbittransfer}
Let $ ( \whiteperm^{\opsymbol}, \blackperm^{\opsymbol} ) $ be the reroute of $ ( \whiteperm, \blackperm ) $ relative to $ ( a, b ) $.

A $ \whiteperm \blackperm $-sequence $ x_0, \ldots, x_n $ is called \emph{strict} relative to $ ( a, b ) $ iff it contains at least two terms ($ n \geq 1 $) and $ x_i \neq a, \whiteperm ( a ), b $ for all $ 0 < i < n $. \emph{It is allowed that $ x_0 = x_n $.}

\Assertions

\begin{enumerate}
\setlength{\itemsep}{5pt}

\item \label{LorbittransferP1P2} If $ x_0, \ldots x_i \ldots, x_n \in E $ is a strict $ \whiteperm \blackperm $-sequence from $ a $ to $ \whiteperm ( a ) $ then $ a_{\black}, \ldots x_i \ldots, b $ is a $ \whiteperm^{\opsymbol} \blackperm^{\opsymbol} $-sequence. In particular, $ a_{\black} $ and $ b $ represent the same $ \whiteperm^{\opsymbol} \blackperm^{\opsymbol} $-orbit. \emph{It is allowed that the $ \whiteperm \blackperm $-sequence has no interior terms: if $ a, \whiteperm ( a ) $ is a $ \whiteperm \blackperm $-sequence then $ a_{\black}, b $ is a $ \whiteperm^{\opsymbol} \blackperm^{\opsymbol} $-sequence.}

\item \label{LorbittransferP1P4} Assume that $ \whiteperm ( a ) \neq a, b $. If $ x_0, \ldots x_i \ldots, x_n \in E $ is a strict $ \whiteperm \blackperm $-sequence from $ \whiteperm ( a ) $ to $ b $ then $ a_{\white}, \whiteperm ( a ), \ldots x_i \ldots, a_{\black} $ is a $ \whiteperm^{\opsymbol} \blackperm^{\opsymbol} $-sequence. In particular, $ a_{\white} $ and $ a_{\black} $ represent the same $ \whiteperm^{\opsymbol} \blackperm^{\opsymbol} $-orbit. \emph{It is allowed that the $ \whiteperm \blackperm $-sequence has no interior terms: if $ \whiteperm ( a ), b $ is a $ \whiteperm \blackperm $-sequence then $ a_{\white}, \whiteperm ( a ), a_{\black} $ is a $ \whiteperm^{\opsymbol} \blackperm^{\opsymbol} $-sequence.} If $ \whiteperm ( a ) = b $ then $ \whiteperm^{\opsymbol} \blackperm^{\opsymbol} ( a_{\white} ) = a_{\black} $. If $ \whiteperm ( a ) = a $ then the conclusion is definitely false.

\item \label{LorbittransferP1P3} Assume that $ \whiteperm ( a ) \neq a $. If $ x_0, \ldots x_i \ldots, x_n \in E $ is a strict $ \whiteperm \blackperm $-sequence from $ b $ to $ a $ then $ b, \ldots x_i \ldots, a_{\white} $ is a $ \whiteperm^{\opsymbol} \blackperm^{\opsymbol} $-sequence. In particular, $ b $ and $ a_{\white} $ represent the same $ \whiteperm^{\opsymbol} \blackperm^{\opsymbol} $-orbit. \emph{It is allowed that the $ \whiteperm \blackperm $-sequence has no interior terms: if $ b, a $ is a $ \whiteperm \blackperm $-sequence then $ b, a_{\white} $ is a $ \whiteperm^{\opsymbol} \blackperm^{\opsymbol} $-sequence.} \emph{If $ \whiteperm ( a ) = a $ then the conclusion is definitely false.}

\item \label{LorbittransferP4N} If $ x_0, \ldots x_i \ldots, x_n \in E $ is a strict $ \whiteperm \blackperm $-sequence from $ b $ to $ \whiteperm ( a ) $ then $ b, \ldots x_i \ldots, b $ is a $ \whiteperm^{\opsymbol} \blackperm^{\opsymbol} $-sequence. In particular, $ b $ represents a different $ \whiteperm^{\opsymbol} \blackperm^{\opsymbol} $-orbit than both $ a_{\white} $ and $ a_{\black} $. \emph{It is allowed that the $ \whiteperm \blackperm $-sequence has no interior terms: if $ b, \whiteperm ( a ) $ is a $ \whiteperm \blackperm $-sequence then $ b, b $ is a $ \whiteperm^{\opsymbol} \blackperm^{\opsymbol} $-sequence, i.e. $ b $ is fixed by $ \whiteperm^{\opsymbol} \blackperm^{\opsymbol} $.}

\item \label{LorbittransferP2N} Assume $ \whiteperm ( a ) \neq a, b $. If $ x_0, \ldots x_i \ldots, x_n \in E $ is a strict $ \whiteperm \blackperm $-sequence from $ \whiteperm ( a ) $ to $ a $ then $ a_{\white}, \whiteperm ( a ), \ldots x_i \ldots, a_{\white} $ is a $ \whiteperm^{\opsymbol} \blackperm^{\opsymbol} $-sequence. In particular, $ a_{\white} $ represents a different $ \whiteperm^{\opsymbol} \blackperm^{\opsymbol} $-orbit than both $ a_{\black} $ and $ b $. \emph{It is allowed that the $ \whiteperm \blackperm $-sequence has no interior terms: if $ \whiteperm ( a ), a $ is a $ \whiteperm \blackperm $-sequence then $ a_{\white}, \whiteperm ( a ), a_{\white} $ is a $ \whiteperm^{\opsymbol} \blackperm^{\opsymbol} $-sequence.} If $ \whiteperm ( a ) = a $ then $ \whiteperm^{\opsymbol} \blackperm^{\opsymbol} ( a_{\white} ) = ( a_{\white} ) $. If $ \whiteperm ( a ) = b $ then the conclusion is definitely false.

\item \label{LorbittransferP3N} Assume that $ \whiteperm ( a ) \neq b $. If $ x_0, \ldots x_i \ldots, x_n \in E $ is a strict $ \whiteperm \blackperm $-sequence from $ a $ to $ b $ then $ a_{\black}, \ldots x_i \ldots, a_{\black} $ is a $ \whiteperm^{\opsymbol} \blackperm^{\opsymbol} $-sequence. In particular, $ a_{\black} $ represents a different $ \whiteperm^{\opsymbol} \blackperm^{\opsymbol} $-orbit than both $ a_{\white} $ and $ b $. \emph{It is allowed that the $ \whiteperm \blackperm $-sequence has no interior terms: if $ a, b $ is a $ \whiteperm \blackperm $-sequence then $ a_{\black}, a_{\black} $ is a $ \whiteperm^{\opsymbol} \blackperm^{\opsymbol} $-sequence, i.e. $ a_{\black} $ is fixed by $ \whiteperm^{\opsymbol} \blackperm^{\opsymbol} $.} If $ \whiteperm ( a ) = b $ then the conclusion is false.

\item \label{LorbittransferC} If $ x_0, \ldots x_i \ldots, x_n \in E $ is a strict $ \whiteperm \blackperm $-sequence from $ a $ to $ a $ then $ a_{\black}, \ldots x_i \ldots, a_{\white} $ is a $ \whiteperm^{\opsymbol} \blackperm^{\opsymbol} $-sequence. In particular, $ a_{\black} $ and $ a_{\white} $ represent the same $ \whiteperm^{\opsymbol} \blackperm^{\opsymbol} $-orbit. \emph{It is allowed that the $ \whiteperm \blackperm $-sequence has no interior terms: if $ a, a $ is a $ \whiteperm \blackperm $-sequence then $ a_{\black}, a_{\white} $ is a $ \whiteperm^{\opsymbol} \blackperm^{\opsymbol} $-sequence.}

\item \label{LorbittransferB} If $ x_0, \ldots x_i \ldots, x_n \in E $ is a strict $ \whiteperm \blackperm $-sequence from $ \whiteperm ( a ) $ to $ \whiteperm ( a ) $ then $ a_{\white}, \whiteperm ( a ), \ldots x_i \ldots, b $ is a $ \whiteperm^{\opsymbol} \blackperm^{\opsymbol} $-sequence. In particular, $ a_{\white} $ and $ b $ represent the same $ \whiteperm^{\opsymbol} \blackperm^{\opsymbol} $-orbit. \emph{It is allowed that the $ \whiteperm \blackperm $-sequence has no interior terms: if $ \whiteperm ( a ), \whiteperm ( a ) $ is a $ \whiteperm \blackperm $-sequence then $ a_{\white}, \whiteperm ( a ), b $ is a $ \whiteperm^{\opsymbol} \blackperm^{\opsymbol} $-sequence.}

\item \label{LorbittransferD} If $ x_0, \ldots x_i \ldots, x_n \in E $ is a strict $ \whiteperm \blackperm $-sequence from $ b $ to $ b $ then $ b, \ldots x_i \ldots, a_{\black} $ is a $ \whiteperm^{\opsymbol} \blackperm^{\opsymbol} $-sequence. In particular, $ b $ and $ a_{\black} $ represent the same $ \whiteperm^{\opsymbol} \blackperm^{\opsymbol} $-orbit. \emph{It is allowed that the $ \whiteperm \blackperm $-sequence has no interior terms: if $ b, b $ is a $ \whiteperm \blackperm $-sequence then $ b, a_{\black} $ is a $ \whiteperm^{\opsymbol} \blackperm^{\opsymbol} $-sequence.}
\end{enumerate}
\end{orbittransfer}

\begin{proof} \proofbox{assertion (\ref{LorbittransferP1P2})} If the $ \whiteperm \blackperm $-sequence has only two terms then necessarily $ \blackperm ( a ) = a $ and so it is immediate from the definitions (\ref{op3}) (\ref{op6}) of $ \whiteperm^{\opsymbol}, \blackperm^{\opsymbol} $ that $ \whiteperm^{\opsymbol} \blackperm^{\opsymbol} ( a_{\black} ) = b $. So, I can assume from now on that $ n \geq 2 $. It follows that $ \blackperm ( x_0 ) \neq a $ and $ \whiteperm ( \blackperm ( x_0 ) ) \neq a, b $ since otherwise $ x_1 \in \{ a, \whiteperm ( a ), b \} $ and the assumption ``strict'' would be contradicted. Thus, $ \blackperm^{\opsymbol} ( a_{\black} ) = \blackperm ( x_0 ) $ by the definition (\ref{op6}) of $ \blackperm^{\opsymbol} $, and $ \whiteperm^{\opsymbol} ( \blackperm ( x_0 ) ) = \whiteperm ( \blackperm ( x_0 ) ) $ by the definition (\ref{op4}) of $ \whiteperm^{\opsymbol} $. Combined, $ a_{\black}, x_1 $ is a $ \whiteperm^{\opsymbol} \blackperm^{\opsymbol} $-sequence. Using ``strict'' again and Lemma \ref{LEMunbiasedorbitsalso} repeatedly, $ a_{\black}, x_1, \ldots, x_{n-1} $ is a $ \whiteperm^{\opsymbol} \blackperm^{\opsymbol} $-sequence. It remains to show that $ \whiteperm^{\opsymbol} \blackperm^{\opsymbol} ( x_{n-1} ) = b $. Since $ \whiteperm \blackperm ( x_{n-1} ) = x_n = \whiteperm ( a ) $, we have $ \blackperm ( x_{n-1} ) = a $. Since $ x_{n-1} \neq a $ by ``strict'' and $ n \geq 2 $, the definition (\ref{op6}) of $ \blackperm^{\opsymbol} $ implies that $ \blackperm^{\opsymbol} ( x_{n-1} ) = a_{\black} $. The definition (\ref{op3}) of $ \whiteperm^{\opsymbol} $ then implies that $ \whiteperm^{\opsymbol} \blackperm^{\opsymbol} ( x_{n-1} ) = b $.

\proofbox{assertion (\ref{LorbittransferP1P4})} Because of the assumption $ \whiteperm ( a ) \neq a $, the definitions (\ref{op1}) (\ref{op5}) of $ \whiteperm^{\opsymbol}, \blackperm^{\opsymbol} $ imply that $ \whiteperm^{\opsymbol} \blackperm^{\opsymbol} ( a_{\white} ) = \whiteperm ( a ) $. Thus, it suffices to show that $ \whiteperm ( a ), \ldots x_i \ldots, a_{\black} $ is a $ \whiteperm^{\opsymbol} \blackperm^{\opsymbol} $-sequence. Note that $ \blackperm ( \whiteperm ( a ) ) \neq a $ because otherwise $ \whiteperm ( a ) $ would be a fixed point of $ \whiteperm \blackperm $, contradicting the fact that $ \whiteperm ( a ) $ represents the same $ \whiteperm \blackperm $-orbit as $ b \neq \whiteperm ( a ) $. Together with the assumption $ x_0 = \whiteperm ( a ) \neq a $, the definition (\ref{op7}) of $ \blackperm^{\opsymbol} $ says that $ \blackperm^{\opsymbol} ( x_0 ) = \blackperm ( x_0 ) $. Suppose that the $ \whiteperm \blackperm $-sequence contains only two terms, i.e. $ \whiteperm \blackperm ( x_0 ) = b $. Using the assumption $ \whiteperm ( a ) \neq b $, the definition (\ref{op2}) of $ \whiteperm^{\opsymbol} $ says that $ \whiteperm^{\opsymbol} ( \blackperm ( x_0 ) ) = a_{\black} $. Combining with the known $ \blackperm^{\opsymbol} ( x_0 ) = \blackperm ( x_0 ) $ shows that $ \whiteperm ( a ), a_{\black} $ is a $ \whiteperm^{\opsymbol} \blackperm^{\opsymbol} $-sequence. Suppose now that the $ \whiteperm \blackperm $-sequence contains at least three terms ($ n \geq 2 $). Since $ x_0 = \whiteperm ( a ) \neq a $ by assumption and $ x_i \neq a, \whiteperm ( a ), b $ for all $ 0 < i < n $ by ``strict'', Lemma \ref{LEMunbiasedorbitsalso} says that $ x_0, x_1, \ldots, x_{n-1} $ is a $ \whiteperm^{\opsymbol} \blackperm^{\opsymbol} $-sequence. It remains to show that $ \whiteperm^{\opsymbol} \blackperm^{\opsymbol} ( x_{n-1} ) = a_{\black} $. It must be true that $ \blackperm ( x_{n-1} ) \neq a $, because otherwise $ b = x_n = \whiteperm \blackperm ( x_{n-1} ) = \whiteperm ( a ) $, contradicting the assumption $ \whiteperm ( a ) \neq b $. Since $ x_{n-1} \neq a $ by ``strict'' and $ n \geq 2 $, the definition (\ref{op7}) of $ \blackperm^{\opsymbol} $ implies that $ \blackperm^{\opsymbol} ( x_{n-1} ) = \blackperm ( x_{n-1} ) $. Using the assumption $ \whiteperm ( a ) \neq b $ and the fact that $ \whiteperm ( \blackperm ( x_{n-1} ) ) = b $, the definition (\ref{op2}) of $ \whiteperm^{\opsymbol} $ implies that $ \whiteperm^{\opsymbol} ( \blackperm ( x_{n-1} ) ) = a_{\black} $. Combining the two equalities yields $ \whiteperm^{\opsymbol} \blackperm^{\opsymbol} ( x_{n-1} ) = a_{\black} $. The last claim is easy: if $ \whiteperm ( a ) = b $ then it is immediate from the definitions (\ref{op2}) (\ref{op5}) of $ \whiteperm^{\opsymbol}, \blackperm^{\opsymbol} $ that $ \whiteperm^{\opsymbol} \blackperm^{\opsymbol} ( a_{\white} ) = a_{\black} $. If $ \whiteperm ( a ) = a $ then it is immediate from the definitions (\ref{op1}) (\ref{op5}) of $ \whiteperm^{\opsymbol}, \blackperm^{\opsymbol} $ that $ \whiteperm^{\opsymbol} \blackperm^{\opsymbol} ( a_{\white} ) = a_{\white} $, so the conclusion cannot possibly be true.

\proofbox{assertion (\ref{LorbittransferP1P3})} It must be true that $ \blackperm ( b ) \neq a $, because otherwise $ \whiteperm \blackperm ( b ) = \whiteperm ( a ) $, contradicting either $ \whiteperm ( a ) \neq a $ (if $ n = 1 $) or the ``strict'' assumption (if $ n \geq 2 $). It follows from this and the definition (\ref{op7}) of $ \blackperm^{\opsymbol} $ that $ \blackperm^{\opsymbol} ( b ) = \blackperm ( b ) $. Suppose that the $ \whiteperm \blackperm $-sequence contains only two terms, i.e. that $ \whiteperm \blackperm ( b ) = a $. Since $ \whiteperm ( a ) \neq a $ is assumed, the definition (\ref{op1}) of $ \whiteperm^{\opsymbol} $ implies that $ \whiteperm^{\opsymbol} ( \blackperm ( b ) ) = a_{\white} $. Combining with the known $ \blackperm^{\opsymbol} ( b ) = \blackperm ( b ) $ yields $ \whiteperm^{\opsymbol} \blackperm^{\opsymbol} ( b ) = a_{\white} $. Suppose now that the $ \whiteperm \blackperm $-sequence contains at least three terms ($ n \geq 2 $). Since $ x_0 = b \neq a $ and $ x_i \neq a, \whiteperm ( a ), b $ by ``strict'', Lemma \ref{LEMunbiasedorbitsalso} says that $ x_0, x_1, \ldots, x_{n-1} $ is a $ \whiteperm^{\opsymbol} \blackperm^{\opsymbol} $-sequence. It remains to show that $ \whiteperm^{\opsymbol} \blackperm^{\opsymbol} ( x_{n-1} ) = a_{\white} $. By ``strict'' and $ n \geq 2 $, $ x_{n-1} \neq a $. Also, $ \blackperm ( x_{n-1} ) \neq a $ since otherwise $ a = x_n = \whiteperm \blackperm ( x_{n-1} ) = \whiteperm ( a ) $, contradicting the assumption. These two facts and the definition (\ref{op7}) of $ \blackperm^{\opsymbol} $ imply that $ \blackperm^{\opsymbol} ( x_{n-1} ) = \blackperm ( x_{n-1} ) $. Since $ \whiteperm ( a ) \neq a $ by assumption, the fact that $ \whiteperm \blackperm ( x_{n-1} ) = x_n = a $ and the definition (\ref{op1}) of $ \whiteperm^{\opsymbol} $ imply that $ \whiteperm^{\opsymbol} ( \blackperm ( x_{n-1} ) ) = a_{\white} $. Combining these two equalities yields $ \whiteperm^{\opsymbol} \blackperm^{\opsymbol} ( x_{n-1} ) = a_{\white} $. If $ \whiteperm ( a ) = a $ then it is immediate from the definitions (\ref{op1}) (\ref{op5}) of $ \whiteperm^{\opsymbol}, \blackperm^{\opsymbol} $ that $ \whiteperm^{\opsymbol} \blackperm^{\opsymbol} ( a_{\white} ) = a_{\white} $, so the conclusion cannot possibly be true.

\proofbox{assertion (\ref{LorbittransferP4N})} If the $ \whiteperm \blackperm $-sequence contains only two terms then necessarily $ \blackperm ( b ) = a $ and so it is immediate from the definitions (\ref{op3}) (\ref{op6}) of $ \whiteperm^{\opsymbol}, \blackperm^{\opsymbol} $ that $ \whiteperm^{\opsymbol} \blackperm^{\opsymbol} ( b ) = b $. So, I can assume that $ n \geq 2 $. Necessarily $ \blackperm ( b ) \neq a $ since otherwise $ x_1 = \whiteperm ( a ) $, contradicting ``strict'' and $ n \geq 2 $. Since $ x_i \neq a, \whiteperm ( a ), b $ for all $ 0 < i < n $ by ``strict'', Lemma \ref{LEMunbiasedorbitsalso} says that $ b, x_1, \ldots, x_{n-1} $ is a $ \whiteperm^{\opsymbol} \blackperm^{\opsymbol} $-sequence. It remains to show $ \whiteperm^{\opsymbol} \blackperm^{\opsymbol} ( x_{n-1} ) = b $. Since $ x_{n-1} \neq a $ by ``strict'' and $ n \geq 2 $, and since $ \whiteperm \blackperm ( x_{n-1} ) = x_n = \whiteperm ( a ) $ implies $ \blackperm ( x_{n-1} ) = a $, the definition (\ref{op6}) of $ \blackperm^{\opsymbol} $ says that $ \blackperm^{\opsymbol} ( x_{n-1} ) = a_{\black} $. Definition (\ref{op3}) of $ \whiteperm^{\opsymbol} $ says that $ \whiteperm^{\opsymbol} \blackperm^{\opsymbol} ( x_{n-1} ) = b $.

\proofbox{assertion (\ref{LorbittransferP2N})} Since $ \whiteperm ( a ) \neq a $ by assumption, the definitions (\ref{op1}) (\ref{op5}) of $ \whiteperm^{\opsymbol}, \blackperm^{\opsymbol} $ say that $ \whiteperm^{\opsymbol} \blackperm^{\opsymbol} ( a_{\white} ) = \whiteperm ( a ) $. So, it suffices to show that $ \whiteperm ( a ), \ldots x_i \ldots a_{\white} $ is a $ \whiteperm^{\opsymbol} \blackperm^{\opsymbol} $-sequence. It must be true that $ \blackperm ( x_0 ) \neq a $ since otherwise $ x_0 = \whiteperm ( a ) $ would be a fixed point of $ \whiteperm \blackperm $, contradicting the assumption that $ a $ represents the same $ \whiteperm \blackperm $-orbit as $ \whiteperm ( a ) \neq a $. Combined with the assumption $ x_0 = \whiteperm ( a ) \neq a $, the definition (\ref{op7}) of $ \blackperm^{\opsymbol} $ implies that $ \blackperm^{\opsymbol} ( x_0 ) = \blackperm ( x_0 ) $. Suppose that the $ \whiteperm \blackperm $-sequence contains only two terms, i.e. that $ \whiteperm \blackperm ( \whiteperm ( a ) ) = a $. Since $ \whiteperm ( a ) \neq a $ by assumption, the definition (\ref{op1}) of $ \whiteperm^{\opsymbol} $ says that $ \whiteperm^{\opsymbol} ( \blackperm ( x_0 ) ) = a_{\white} $. Since $ \blackperm^{\opsymbol} ( x_0 ) = \blackperm ( x_0 ) $ is known, $ \whiteperm^{\opsymbol} \blackperm^{\opsymbol} ( x_0 ) = a_{\white} $. Suppose now that the $ \whiteperm \blackperm $-sequence contains at least three terms ($ n \geq 2 $). Since $ x_0 = \whiteperm ( a ) \neq a $ by assumption, and $ x_i \neq a, \whiteperm ( a ), b $ for all $ 0 < i < n $ by ``strict'', Lemma \ref{LEMunbiasedorbitsalso} says that $ \whiteperm ( a ), x_1, \ldots, x_{n-1} $ is a $ \whiteperm^{\opsymbol} \blackperm^{\opsymbol} $-sequence. It remains to show that $ \whiteperm^{\opsymbol} \blackperm^{\opsymbol} ( x_{n-1} ) = a_{\white} $. It must be true that $ \blackperm ( x_{n-1} ) \neq a $, since otherwise $ a = x_n = \whiteperm \blackperm ( x_{n-1} ) = \whiteperm ( a ) $, contradicting the assumption $ \whiteperm ( a ) \neq a $. Since $ x_{n-1} \neq a $ by ``strict'' and $ n \geq 2 $, the definition (\ref{op7}) of $ \blackperm^{\opsymbol} $ says that $ \blackperm^{\opsymbol} ( x_{n-1} ) = \blackperm ( x_{n-1} ) $. Since $ \whiteperm ( a ) \neq a $ by assumption, the fact that $ \whiteperm \blackperm ( x_{n-1} ) = a $ and the definition (\ref{op1}) of $ \whiteperm^{\opsymbol} $ imply that $ \whiteperm^{\opsymbol} ( \blackperm ( x_{n-1} ) ) = a_{\white} $. Combining the two equalities yields $ \whiteperm^{\opsymbol} \blackperm^{\opsymbol} ( x_{n-1} ) = a_{\white} $. The last claim is easy: if $ \whiteperm ( a ) = a $ then it is immediate from the definitions (\ref{op1}) (\ref{op5}) of $ \whiteperm^{\opsymbol}, \blackperm^{\opsymbol} $ that $ \whiteperm^{\opsymbol} \blackperm^{\opsymbol} ( a_{\white} ) = a_{\white} $. If $ \whiteperm ( a ) = b $ then it is immediate from the definitions (\ref{op2}) (\ref{op5}) of $ \whiteperm^{\opsymbol}, \blackperm^{\opsymbol} $ that $ \whiteperm^{\opsymbol} \blackperm^{\opsymbol} ( a_{\white} ) = a_{\black} $, so the conclusion cannot possibly be true.

\proofbox{assertion (\ref{LorbittransferP3N})} It must be true that $ \blackperm ( a ) \neq a $, since otherwise $ \whiteperm \blackperm ( a ) = \whiteperm ( a ) $, contradicting either the assumption $ \whiteperm ( a ) \neq b $ (if $ n = 1 $) or ``strict'' (if $ n \geq 2 $). The definition (\ref{op6}) of $ \blackperm^{\opsymbol} $ then implies that $ \blackperm^{\opsymbol} ( a_{\black} ) = \blackperm ( x_0 ) $. Suppose that the $ \whiteperm \blackperm $-sequence contains only two terms, i.e. that $ \whiteperm \blackperm ( x_0 ) = b $. Since $ \whiteperm ( a ) \neq b $ by assumption, the fact that $ \whiteperm \blackperm ( x_0 ) = b $ and the definition (\ref{op2}) of $ \whiteperm^{\opsymbol} $ imply that $ \whiteperm^{\opsymbol} ( \blackperm ( x_0 ) ) = a_{\black} $. Combined with the known $ \blackperm^{\opsymbol} ( a_{\black} ) = \blackperm ( x_0 ) $ yields $ \whiteperm^{\opsymbol} \blackperm^{\opsymbol} ( a_{\black} ) = a_{\black} $. Suppose now that the $ \whiteperm \blackperm $-sequence contains at least three terms ($ n \geq 2 $). It is known already that $ \blackperm ( a ) \neq a $, and $ \whiteperm ( \blackperm ( x_0 ) ) = x_1 \neq a, b $ by ``strict'', so definition (\ref{op4}) of $ \whiteperm^{\opsymbol} $ implies that $ \whiteperm^{\opsymbol} ( \blackperm ( x_0 ) ) = x_1 $. Combined with the known $ \blackperm^{\opsymbol} ( a_{\black} ) = \blackperm ( x_0 ) $ yields $ \whiteperm^{\opsymbol} \blackperm^{\opsymbol} ( a_{\black} ) = x_1 $. By ``strict'', Lemma \ref{LEMunbiasedorbitsalso} says that $ a_{\black}, x_1, \ldots, x_{n-1} $ is a $ \whiteperm^{\opsymbol} \blackperm^{\opsymbol} $-sequence. It remains to show that $ \whiteperm^{\opsymbol} \blackperm^{\opsymbol} ( x_{n-1} ) = a_{\black} $. It must be true that $ \blackperm ( x_{n-1} ) \neq a $, since otherwise implies $ b = x_n = \whiteperm \blackperm ( x_{n-1} ) = \whiteperm ( a ) $, contradicting the assumption $ \whiteperm ( a ) \neq b $. Since also $ x_{n-1} \neq a $ is known by ``strict'' and $ n \geq 2 $, the definition (\ref{op7}) of $ \blackperm^{\opsymbol} $ says $ \blackperm^{\opsymbol} ( x_{n-1} ) = \blackperm ( x_{n-1} ) $. Since $ \whiteperm ( a ) \neq b $ by assumption, the fact that $ \whiteperm \blackperm ( x_{n-1} ) = b $ and the definition (\ref{op2}) of $ \whiteperm^{\opsymbol} $ imply that $ \whiteperm^{\opsymbol} ( \blackperm ( x_{n-1} ) ) = a_{\black} $. Combining the two equalities yields $ \whiteperm^{\opsymbol} \blackperm^{\opsymbol} ( x_{n-1} ) = a_{\black} $. If $ \whiteperm ( a ) = b $ then it is immediate from the definitions (\ref{op2}) (\ref{op5}) of $ \whiteperm^{\opsymbol}, \blackperm^{\opsymbol} $ that $ \whiteperm^{\opsymbol} \blackperm^{\opsymbol} ( a_{\white} ) = a_{\black} $, so the conclusion cannot possibly be true.

\proofbox{assertion (\ref{LorbittransferC})} Note that the assumption implies $ \whiteperm ( a ) \neq a $. Further, $ \blackperm ( a ) \neq a $ because otherwise $ \whiteperm \blackperm ( a ) = \whiteperm ( a ) $, contradicting the assumption. Since $ \blackperm ( a ) \neq a $, the definition (\ref{op6}) of $ \blackperm^{\opsymbol} $ implies that $ \blackperm^{\opsymbol} ( a_{\black} ) = \blackperm ( x_0 ) $. Suppose that the $ \whiteperm \blackperm $-sequence contains only two terms, i.e. that $ \whiteperm \blackperm ( a ) = a $. Since $ \whiteperm ( a ) \neq a $ by assumption, the definition (\ref{op1}) of $ \whiteperm^{\opsymbol} $ implies that $ \whiteperm^{\opsymbol} ( \blackperm ( a ) ) = a_{\white} $. Combined with the known $ \blackperm^{\opsymbol} ( a_{\black} ) = \blackperm ( x_0 ) $ yields $ \whiteperm^{\opsymbol} \blackperm^{\opsymbol} ( a_{\black} ) = a_{\white} $. Suppose now that the $ \whiteperm \blackperm $-sequence contains at least three terms ($ n \geq 2 $). Since $ \blackperm ( x_0 ) \neq a $ is known and $ \whiteperm ( \blackperm ( x_0 ) ) = x_1 \neq a, b $ by ``strict'', the definition (\ref{op4}) of $ \whiteperm^{\opsymbol} $ says that $ \whiteperm^{\opsymbol} ( \blackperm ( x_0 ) ) = x_1 $. Combined with the known $ \blackperm^{\opsymbol} ( a_{\black} ) = \blackperm ( x_0 ) $ yields $ \whiteperm^{\opsymbol} \blackperm^{\opsymbol} ( a_{\black} ) = x_1 $. Since $ x_i \neq a, \whiteperm ( a ), b $ for all $ 0 < i < n $ by ``strict'', Lemma \ref{LEMunbiasedorbitsalso} says that $ a_{\black}, x_1, \ldots, x_{n-1} $ is a $ \whiteperm^{\opsymbol} \blackperm^{\opsymbol} $-sequence. It remains to show that $ \whiteperm^{\opsymbol} \blackperm^{\opsymbol} ( x_{n-1} ) = a_{\white} $. It must be true that $ \blackperm ( x_{n-1} ) \neq a $, since otherwise $ a = x_n = \whiteperm \blackperm ( x_{n-1} ) = \whiteperm ( a ) $, contrary to the assumption. Since $ x_{n-1} \neq a $ by ``strict'' and $ n \geq 2 $, the definition (\ref{op7}) of $ \blackperm^{\opsymbol} $ implies that $ \blackperm^{\opsymbol} ( x_{n-1} ) = \blackperm ( x_{n-1} ) $. Since $ \whiteperm ( a ) \neq a $ by assumption, the fact that $ \whiteperm \blackperm ( x_{n-1} ) = a $ and the definition (\ref{op1}) of $ \whiteperm^{\opsymbol} $ imply that $ \whiteperm^{\opsymbol} ( \blackperm ( x_{n-1} ) ) = a_{\white} $. Combined, the two equalities yield $ \whiteperm^{\opsymbol} \blackperm^{\opsymbol} ( x_{n-1} ) = a_{\white} $.

\proofbox{assertion (\ref{LorbittransferB})} By assumption, $ \whiteperm ( a ) \neq a, b $. In particular, the definitions (\ref{op1}) (\ref{op5}) of $ \whiteperm^{\opsymbol}, \blackperm^{\opsymbol} $ imply that $ \whiteperm^{\opsymbol} \blackperm^{\opsymbol} ( a_{\white} ) = \whiteperm ( a ) $. Therefore, it suffices merely to show that $ \whiteperm ( a ), \ldots x_i \ldots, b $ is a $ \whiteperm^{\opsymbol} \blackperm^{\opsymbol} $-sequence. Suppose that the $ \whiteperm \blackperm $-sequence contains only two terms, i.e. that $ \whiteperm \blackperm ( \whiteperm ( a ) ) = \whiteperm ( a ) $. Then $ \blackperm ( \whiteperm ( a ) ) = a $ and since $ \whiteperm ( a ) \neq a $ is known the definition (\ref{op6}) says $ \blackperm^{\opsymbol} ( \whiteperm ( a ) ) = a_{\black} $. Definition (\ref{op3}) of $ \whiteperm^{\opsymbol} $ implies that $ \whiteperm^{\opsymbol} \blackperm^{\opsymbol} ( \whiteperm ( a ) ) = b $. Suppose now that the $ \whiteperm \blackperm $-sequence contains at least three terms ($ n \geq 2 $). Since $ x_0 = \whiteperm ( a ) \neq a $ is known, and since $ x_i \neq a, \whiteperm ( a ), b $ for all $ 0 < i < n $ by ``strict'', Lemma \ref{LEMunbiasedorbitsalso} says that $ x_0, x_1, \ldots, x_{n-1} $ is a $ \whiteperm^{\opsymbol} \blackperm^{\opsymbol} $-sequence. It remains to show that $ \whiteperm^{\opsymbol} \blackperm^{\opsymbol} ( x_{n-1} ) = b $. Since $ \blackperm ( x_{n-1} ) = a $, because $ \whiteperm ( a ) = x_n = \whiteperm \blackperm ( x_{n-1} ) $, and since $ x_{n-1} \neq a $ by ``strict'' and $ n \geq 2 $, definition (\ref{op6}) of $ \blackperm^{\opsymbol} $ says $ \blackperm^{\opsymbol} ( x_{n-1} ) = a_{\black} $. Definition (\ref{op3}) of $ \whiteperm^{\opsymbol} $ then says $ \whiteperm^{\opsymbol} \blackperm^{\opsymbol} ( x_{n-1} ) = b $.

\proofbox{assertion (\ref{LorbittransferD})} Note that the assumption implies $ \whiteperm ( a ) \neq b $. Further, $ \blackperm ( b ) \neq a $, since otherwise $ \whiteperm \blackperm ( b ) = \whiteperm ( a ) $, contradicting the assumption. Thus, $ x_0, \blackperm ( x_0 ) \neq a $ and so the definition (\ref{op7}) of $ \blackperm^{\opsymbol} $ implies that $ \blackperm^{\opsymbol} ( x_0 ) = \blackperm ( x_0 ) $. Suppose that the $ \whiteperm \blackperm $-sequence contains only two terms, i.e. that $ \whiteperm \blackperm ( b ) = b $. Since $ \whiteperm ( a ) \neq b $ is known, the fact that $ \whiteperm \blackperm ( b ) = b $ and the definition (\ref{op2}) of $ \whiteperm^{\opsymbol} $ imply that $ \whiteperm^{\opsymbol} ( \blackperm ( b ) ) = a_{\black} $. Combining with the known $ \blackperm^{\opsymbol} ( x_0 ) = \blackperm ( x_0 ) $ yields $ \whiteperm^{\opsymbol} \blackperm^{\opsymbol} ( b ) = a_{\black} $. Suppose now that the arc contains at least three terms ($ n \geq 2 $). Since $ x_i \neq a, \whiteperm ( a ), b $ for all $ 0 < i < n $ by ``strict'', Lemma \ref{LEMunbiasedorbitsalso} says that $ b, x_1, \ldots, x_{n-1} $ is a $ \whiteperm^{\opsymbol} \blackperm^{\opsymbol} $-sequence. It remains to show that $ \whiteperm^{\opsymbol} \blackperm^{\opsymbol} ( x_{n-1} ) = a_{\black} $. It must be true that $ \blackperm ( x_{n-1} ) \neq a $ since otherwise $ b = x_n = \whiteperm \blackperm ( x_{n-1} ) = \whiteperm ( a ) $, contradicting the assumption. Since $ x_{n-1} \neq a $ by ``strict'' and $ n \geq 2 $, the definition (\ref{op7}) of $ \blackperm^{\opsymbol} $ says that $ \blackperm^{\opsymbol} ( x_{n-1} ) = \blackperm ( x_{n-1} ) $. Since $ \whiteperm ( a ) \neq b $ is known, the fact that $ \whiteperm \blackperm ( x_{n-1} ) = b $ and the definition (\ref{op2}) of $ \whiteperm^{\opsymbol} $ imply that $ \whiteperm^{\opsymbol} ( \blackperm ( x_{n-1} ) ) = a_{\black} $. Combining the two yields $ \whiteperm^{\opsymbol} \blackperm^{\opsymbol} ( x_{n-1} ) = a_{\black} $.
\end{proof}

\begin{rmk}
The reader may detect some redundancy among the many statements in Orbit Transfer Lemma \ref{LEMorbittransfer}. However, they will eventually all be needed in full detail.
\end{rmk}

\subsection{Types and Theorem}

The most important classification in the paper is the following:

\begin{dfn}[Type] \label{DFNtypes}
Recall, from \S\ref{Sarcs}, the notion of ``arc''. The pair $ ( \whiteperm, \blackperm ) $ is
\begin{itemize}
\setlength{\itemsep}{7pt}
\item[-] \emph{Type U (Unoriented)} relative to $ ( a, b ) $ iff $ a, \whiteperm ( a ), b $ are distinct and no two of them represent the same $ \whiteperm \blackperm $-orbit.

\item[-] \emph{Type N (Negatively Oriented)} relative to $ ( a, b ) $ iff $ a, \whiteperm ( a ), b $ represent the same $ \whiteperm \blackperm $-orbit and the $ \whiteperm \blackperm $-arc from $ a $ to $ b $ does not contain $ \whiteperm ( a ) $.

\item[-] \emph{Type P (Positively Oriented)} relative to $ ( a, b ) $ iff it is neither Type U nor Type N.
\end{itemize}
\end{dfn}

It is easy to see that this is a partition of $ S_E \times S_E $. Before stating and proving the Reroute Theorem \ref{THMgenus}, I subdivide Type P:

\begin{dfn}[P-Subtypes] \label{DFNsubtypes}
$ ( \whiteperm, \blackperm ) $ is
\begin{itemize}
\setlength{\itemsep}{7pt}
\item[-] \emph{Type P1} relative to $ ( a, b ) $ iff $ a, \whiteperm ( a ), b $ represent the same $ \whiteperm \blackperm $-orbit and the $ \whiteperm \blackperm $-arc from $ a $ to $ b $ contains $ \whiteperm ( a ) $.

\item[-] \emph{Type P2} relative to $ ( a, b ) $ iff $ a, \whiteperm ( a ) $ represent the same $ \whiteperm \blackperm $-orbit, different from that of $ b $.

\item[-] \emph{Type P3} relative to $ ( a, b ) $ iff $ a, b $ represent the same $ \whiteperm \blackperm $-orbit, different from that of $ \whiteperm ( a ) $.

\item[-] \emph{Type P4} relative to $ ( a, b ) $ iff $ \whiteperm ( a ), b $ represent the same $ \whiteperm \blackperm $-orbit, different from that of $ a $.
\end{itemize}
\end{dfn}

It is easy to see that this is a partition of Type P. The definition of ``Type'' is mysterious but thoroughly justified by:

\begin{genustheorem} \label{THMgenus}
Let $ ( \whiteperm^{\opsymbol}, \blackperm^{\opsymbol} ) $ be the reroute of $ ( \whiteperm, \blackperm ) $ relative to $ ( a, b ) $. Let $ g $ be the synthetic genus of $ ( \whiteperm, \blackperm ) $ and $ g^{\opsymbol} $ that of $ ( \whiteperm^{\opsymbol}, \blackperm^{\opsymbol} ) $. \Assertions

\begin{enumerate}
\setlength{\itemsep}{5pt}
\item If $ ( \whiteperm, \blackperm ) $ is Type U relative to $ ( a, b ) $ then $ g^{\opsymbol} = g + 1 $.

\item If $ ( \whiteperm, \blackperm ) $ is Type N relative to $ ( a, b ) $ then $ g^{\opsymbol} = g - 1 $.

\item If $ ( \whiteperm, \blackperm ) $ is Type P relative to $ ( a, b ) $ then $ g^{\opsymbol} = g $.
\end{enumerate}
\end{genustheorem}

The reader will recall that if a permutation pair is not transitive then its synthetic genus does not quite have the ``expected'' topological meaning; see Example \ref{EXMsyntheticgenus}.

\begin{proof}
By Lemma \ref{LEMchardiff}, it suffices to show that $ \numcycles ( \whiteperm^{\opsymbol} \blackperm^{\opsymbol} ) - \numcycles ( \whiteperm \blackperm ) $ is equal to $ -2 $ if $ ( \whiteperm, \blackperm ) $ is Type U, is equal to $ 2 $ if $ ( \whiteperm, \blackperm ) $ is Type N, and is equal to $ 0 $ if $ ( \whiteperm, \blackperm ) $ is Type P. By Proposition \ref{PRPorbitcounting}, the desired difference is $ B^{\opsymbol} - B $ where $ B $ is the number of $ \whiteperm \blackperm $-orbits represented by $ a, \whiteperm ( a ), b $ and $ B^{\opsymbol} $ is the number of $ \whiteperm^{\opsymbol} \blackperm^{\opsymbol} $-orbits represented by $ a_{\white}, a_{\black}, b $.

Suppose $ ( \whiteperm, \blackperm ) $ is Type U. It is immediate from the definition of ``Type U'' that $ B = 3 $. Any two of Orbit Transfer Lemma \ref{LEMorbittransfer} (\ref{LorbittransferC}) (\ref{LorbittransferB}) (\ref{LorbittransferD}) imply $ B^{\opsymbol} = 1 $.

Suppose $ ( \whiteperm, \blackperm ) $ is Type N. It is immediate from the definition of ``Type N'' that $ B = 1 $. The definition of ``Type N'' also supplies the hypotheses of Orbit Transfer Lemma \ref{LEMorbittransfer} (\ref{LorbittransferP4N}) (\ref{LorbittransferP3N}), which then imply $ B^{\opsymbol} = 3 $.

Suppose $ ( \whiteperm, \blackperm ) $ is Type P1. It is immediate from the definition of ``Type P1'' that $ B = 1 $. The definition of ``Type P1'' also supplies the hypotheses of Orbit Transfer Lemma \ref{LEMorbittransfer} (\ref{LorbittransferP1P2}) (\ref{LorbittransferP1P3}), which then imply $ B^{\opsymbol} = 1 $.

Finally, suppose that $ ( \whiteperm, \blackperm ) $ is Type P2 or Type P3 or Type P4. In all three cases, $ B = 2 $.

If $ ( \whiteperm, \blackperm ) $ is Type P2 then Orbit Transfer Lemma \ref{LEMorbittransfer} (\ref{LorbittransferD}) implies that $ a_{\black} $ and $ b $ represent the same $ \whiteperm^{\opsymbol} \blackperm^{\opsymbol} $-orbit, while Orbit Transfer Lemma \ref{LEMorbittransfer} (\ref{LorbittransferP2N}) implies that $ a_{\white} $ represents a different $ \whiteperm^{\opsymbol} \blackperm^{\opsymbol} $-orbit.

If $ ( \whiteperm, \blackperm ) $ is Type P3 then Orbit Transfer Lemma \ref{LEMorbittransfer} (\ref{LorbittransferB}) implies that $ a_{\white} $ and $ b $ represent the same $ \whiteperm^{\opsymbol} \blackperm^{\opsymbol} $-orbit, while Orbit Transfer Lemma \ref{LEMorbittransfer} (\ref{LorbittransferP3N}) implies that $ a_{\black} $ represents a different $ \whiteperm^{\opsymbol} \blackperm^{\opsymbol} $-orbit.

If $ ( \whiteperm, \blackperm ) $ is Type P4 then Orbit Transfer Lemma \ref{LEMorbittransfer} (\ref{LorbittransferC}) implies that $ a_{\white} $ and $ a_{\black} $ represent the same $ \whiteperm^{\opsymbol} \blackperm^{\opsymbol} $-orbit, while Orbit Transfer Lemma \ref{LEMorbittransfer} (\ref{LorbittransferP4N}) implies that $ b $ represents a different $ \whiteperm^{\opsymbol} \blackperm^{\opsymbol} $-orbit. In all three cases, $ B^{\opsymbol} = 2 $.
\end{proof}

For fun, here are some examples:

\begin{exm}[``Theta''] \label{EXMthetatypeU}
In the picture here, a dessin d'enfant in $ \sphere $ is shown on the left, with edges labeled $ a $ and $ b $.
\begin{center}
\includegraphics[scale=0.4]{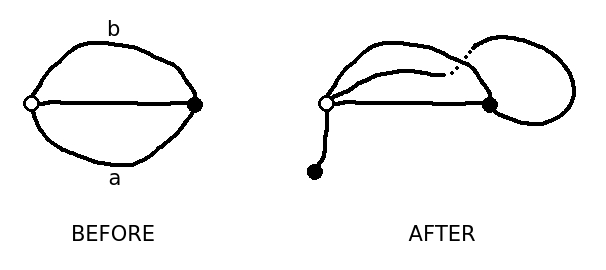}
\end{center}
Observe that the monodromy pair of this dessin d'enfant is Type U relative to $ ( a, b ) $. By applying the reroute $ \opsymbol $ relative to $ ( a, b ) $, one obtains a new monodromy pair, and a model for it is shown on the right. As predicted by the Reroute Theorem \ref{THMgenus}, the synthetic Euler characteristic of the model is $ ( 1 + 2 ) - 4 + 1 = 0 $, reflecting the fact that the ``true'' surface of the new dessin d'famille is $ \torus $.
\end{exm}

\begin{exm}[Tree \#1] \label{EXMtreetypeN}
In the picture here, a tree dessin d'enfant in $ \sphere $ is shown on the left, with edges labeled $ a $ and $ b $.
\begin{center}
\includegraphics[scale=0.4]{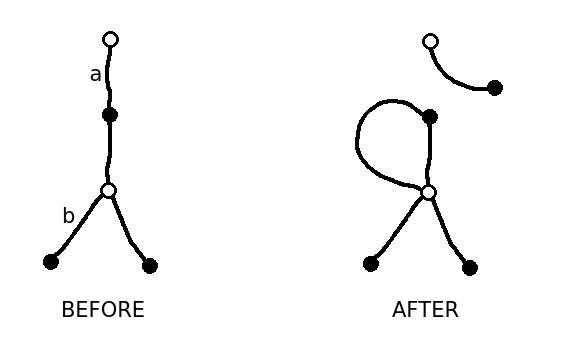}
\end{center}
Observe that the monodromy pair of this dessin d'enfant is Type N relative to $ ( a, b ) $. By applying the reroute $ \opsymbol $ relative to $ ( a, b ) $, one obtains a new monodromy pair, and a model for it is shown on the right. As predicted by the Reroute Theorem \ref{THMgenus}, the synthetic Euler characteristic of the model is $ ( 2 + 4 ) - 5 + 3 = 4 $, higher by $ 2 $ than the original Euler characteristic $ ( 2 + 3 ) - 4 + 1 = 2 $.
\end{exm}

\begin{exm}[Tree \#2] \label{EXMtreetypeP1}
In the picture here, the same tree is used as in the previous Example \ref{EXMtreetypeN}, but now a different edge $ a $ is chosen.
\begin{center}
\includegraphics[scale=0.4]{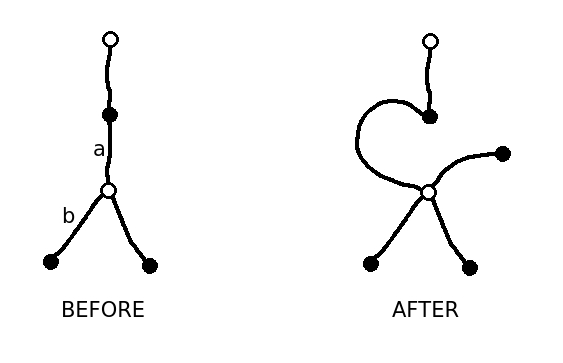}
\end{center}
Observe that the monodromy pair of this dessin d'enfant is Type P1 (since tree dessins d'enfants have only one face, the monodromy pair is always either Type N or Type P1 relative to any pair of edges) relative to $ ( a, b ) $. By applying the reroute $ \opsymbol $ relative to $ ( a, b ) $, one obtains a new monodromy pair, and a model for it is shown on the right. As predicted by the Reroute Theorem \ref{THMgenus}, the synthetic Euler characteristic of the model is $ ( 2 + 4 ) - 5 + 1 = 2 $, the same as the Euler characteristic of the original.
\end{exm}

\begin{rmk}
One can see the basic idea of Proposition \ref{PRPtreecase} in Examples \ref{EXMtreetypeN} and \ref{EXMtreetypeP1}: disconnection cannot occur without creating additional circuits.
\end{rmk}

\begin{exm}
In the picture here, a dessin d'enfant in $ \sphere $ is shown on the left, with edges labeled $ a $ and $ b $.
\begin{center}
\includegraphics[scale=0.4]{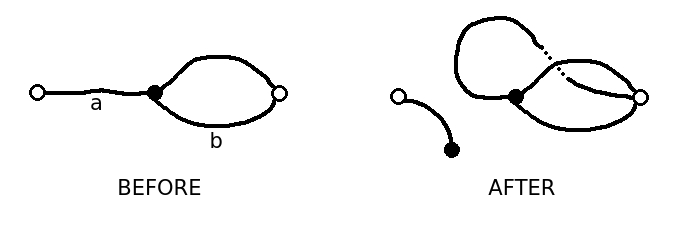}
\end{center}
Observe that the monodromy pair of this dessin d'enfant is Type P2 relative to $ ( a, b ) $. By applying the reroute $ \opsymbol $ relative to $ ( a, b ) $, one obtains a new monodromy pair, and a model for it is shown on the right. As predicted by the Reroute Theorem \ref{THMgenus}, the synthetic Euler characteristic of the model is $ ( 2 + 2 ) - 4 + 2 = 2 $, the same as the original Euler characteristic $ ( 2 + 1 ) - 3 + 2 = 2 $.
\end{exm}

\begin{exm}[Circuit \#1] \label{EXMcircuittypeP3}
In the picture here, a circuit dessin d'enfant in $ \sphere $ is shown on the left, with edges labeled $ a $ and $ b $.
\begin{center}
\includegraphics[scale=0.4]{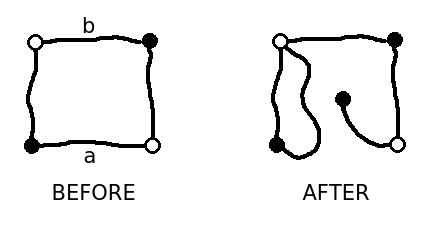}
\end{center}
Observe that the monodromy pair of this dessin d'enfant is Type P3 relative to $ ( a, b ) $. By applying the reroute $ \opsymbol $ relative to $ ( a, b ) $, one obtains a new monodromy pair, and a model for it is shown on the right. As predicted by the Reroute Theorem \ref{THMgenus}, the synthetic Euler characteristic of the model is $ ( 2 + 3 ) - 5 + 2 = 2 $, the same as the Euler characteristic of the original.
\end{exm}

\begin{exm}[Circuit \#2] \label{EXMcircuittypeP4}
In the picture here, the same circuit as in the previous Example \ref{EXMcircuittypeP3} is used, but now a different edge $ b $ is chosen.
\begin{center}
\includegraphics[scale=0.4]{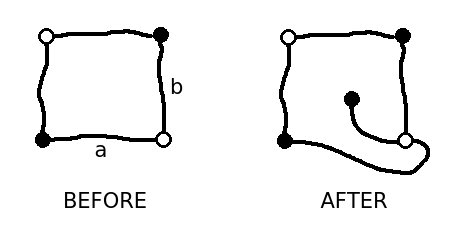}
\end{center}
Observe that the monodromy pair of this dessin d'enfant is Type P4 relative to $ ( a, b ) $. By applying the reroute $ \opsymbol $ relative to $ ( a, b ) $, one obtains a new monodromy pair, and a model for it is shown on the right. As predicted by the Reroute Theorem \ref{THMgenus}, the synthetic Euler characteristic of the model is $ ( 2 + 3 ) - 5 + 2 = 2 $, the same as the Euler characteristic of the original.
\end{exm}

\subsection{More about models}

As usual, let $ ( \whiteperm^{\opsymbol}, \blackperm^{\opsymbol} ) $ be the reroute of $ ( \whiteperm, \blackperm ) $ relative to $ ( a, b ) $.

It will be helpful in \S\ref{Sclassificationbytransitivity} to know a bit about models for $ ( \whiteperm^{\opsymbol}, \blackperm^{\opsymbol} ) $, which I record here:

\begin{lem}[Model Operation] \label{LEMmodelsforoperation}
Let $ ( G, X ) $ be a model for $ ( \whiteperm, \blackperm ) $. \Assertions For any model $ ( G^{\opsymbol}, X^{\opsymbol} ) $ of $ ( \whiteperm^{\opsymbol}, \blackperm^{\opsymbol} ) $, the underlying graph $ G^{\opsymbol} $ is obtained from the graph $ G \setminus a $ by introducing one new $ \black $-vertex $ v $ and introducing two new edges: edge $ a_{\white} $ between vertices $ \white_a $ and $ v $ and edge $ a_{\black} $ between $ \black_a $ and $ \white_b $. In particular, if $ G $ is connected then: $ G^{\opsymbol} $ is connected if and only if there is a walk in $ G^{\opsymbol} $ from $ \white_a $ to $ \black_a $.
\end{lem}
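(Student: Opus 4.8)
The plan is to reduce the claim to a purely combinatorial check, using the principle --- implicit in Definition~\ref{DFNmonodromyofnichees} and in the proof of Lemma~\ref{LEMtransiffconn} --- that the underlying bicolored graph of a (nondegenerate) model is determined up to isomorphism by the monodromy pair: the $\white$-vertices are the $\whiteperm$-orbits, the $\black$-vertices are the $\blackperm$-orbits, the edges are the elements of the ground set, and an edge $e$ is incident to the $\white$-vertex that is its $\whiteperm$-orbit and to the $\black$-vertex that is its $\blackperm$-orbit. In particular the graph of any model of $(\whiteperm^{\opsymbol}, \blackperm^{\opsymbol})$ is the same, so it suffices to show that the graph produced by this recipe from the pair $(\whiteperm^{\opsymbol}, \blackperm^{\opsymbol})$ of Definition~\ref{DFNfundamentaloperation} is exactly the one described in the statement.

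First I would describe $G \setminus a$ in the same language, using \S\ref{Sdeletion}: its vertices are the $\whiteperm'$- and $\blackperm'$-orbits, where $(\whiteperm', \blackperm')$ is the monodromy pair of $(G \setminus a, X)$, obtained from $(\whiteperm, \blackperm)$ by striking $a$ from its cycles. Then, reading off the Alternate form of Definition~\ref{DFNfundamentaloperation} (items \ref{op1}--\ref{op7}), I would verify four things: (i) $\{a_{\white}\}$ is a $\blackperm^{\opsymbol}$-orbit, so $G^{\opsymbol}$ acquires the new $\black$-vertex $v$; (ii) the $\whiteperm^{\opsymbol}$-orbit of $a_{\white}$ meets $E \setminus \{a\}$ in the $\whiteperm'$-orbit $\white_a$, while the $\blackperm^{\opsymbol}$-orbit of $a_{\white}$ is $\{a_{\white}\} = v$, so the edge $a_{\white}$ runs from $\white_a$ to $v$; (iii) the $\blackperm^{\opsymbol}$-orbit of $a_{\black}$ meets $E \setminus \{a\}$ in the $\blackperm'$-orbit $\black_a$, and the $\whiteperm^{\opsymbol}$-orbit of $a_{\black}$ meets $E \setminus \{a\}$ in the $\whiteperm'$-orbit $\white_b$, so the edge $a_{\black}$ runs from $\black_a$ to $\white_b$; and (iv) for every $e \in E \setminus \{a\}$, the $\whiteperm^{\opsymbol}$- and $\blackperm^{\opsymbol}$-orbits of $e$ meet $E \setminus \{a\}$ in the same sets as its $\whiteperm'$- and $\blackperm'$-orbits, so $e$ has the same two endpoints in $G^{\opsymbol}$ as in $G \setminus a$. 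This matching is the bulk of the argument, and what I expect to be the main obstacle is the bookkeeping of the degenerate possibilities $\whiteperm(a) = a$, $\whiteperm(a) = b$, and $b$ lying in the $\whiteperm$-cycle of $a$ (i.e.\ $\white_a = \white_b$); but this is precisely the reason the Alternate Lemma is split into cases, so it should go through without real difficulty.

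For the final sentence, assume $G$ is connected. Since $v$ is joined to $\white_a$ by the single pendant edge $a_{\white}$, the graph $G^{\opsymbol}$ is connected iff the graph $H$ obtained from $G \setminus a$ by adding only the edge $a_{\black}$ (from $\black_a$ to $\white_b$) is connected, and there is a walk in $G^{\opsymbol}$ from $\white_a$ to $\black_a$ iff there is such a walk in $H$. So it is enough to show that $H$ is connected iff $\white_a$ and $\black_a$ lie in one component of $H$; the forward implication is trivial. Conversely, because $a$ is an edge of the connected graph $G$ with endpoints $\white_a, \black_a$, the graph $G \setminus a$ has either one component --- in which case $H$, being $G \setminus a$ plus an edge, is connected --- or exactly two components, one containing $\white_a$ and the other containing $\black_a$; in that case $\white_a$ and $\black_a$ can become joined in $H$ only through the added edge $a_{\black}$, which forces $\white_b$ to lie in the component of $\white_a$, whence $a_{\black}$ bridges the two components and $H$ is connected.
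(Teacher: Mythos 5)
Your proposal is correct and follows essentially the same route as the paper: it identifies the vertices of a model with the $\whiteperm$- and $\blackperm$-orbits and the incidences with orbit membership, then reads off from Definition \ref{DFNfundamentaloperation} (via its Alternate form) that $G^{\opsymbol}$ is $G \setminus a$ plus the pendant vertex $v$ with edge $a_{\white}$ and the edge $a_{\black}$ from $\black_a$ to $\white_b$, exactly as the paper does with its explicit injection $\phi$, including the same degenerate bookkeeping for $\whiteperm(a) \in \{a,b\}$. Your connectivity step (discarding the pendant, using that deleting $a$ from connected $G$ yields at most two components, and locating $\white_b$ to see that $a_{\black}$ bridges them) is a minor, equally valid variant of the paper's concatenation-of-walks argument.
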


Of course, this is just a formalization of Figure \ref{FIGoperation}. Specific information related to embeddings could also be included, but models will only be used in \S\ref{Sclassificationbytransitivity} to argue about connectivity via the convenient language of walks, so Lemma \ref{LEMmodelsforoperation} need not concern itself with embeddings.

\begin{proof}
By the construction of dessins d'familles, Proposition \ref{PRPconstructionofmodels}, the $ \white $-vertices (resp. $ \black $-vertices) of the underlying graph correspond to $ \whiteperm $-orbits (resp. $ \blackperm $-orbits), edges are elements of $ E $, and an edge $ e \in E $ is incident to vertices $ x $ and $ y $ if and only if $ e $ is contained in both of the corresponding orbits.

I work directly from Definition \ref{DFNfundamentaloperation} of $ \opsymbol $. Let $ \vertices $ be the vertex set of $ G $ and $ \vertices^{\opsymbol} $ that of $ G^{\opsymbol} $. Definition \ref{DFNfundamentaloperation} already defines the edges $ E^{\opsymbol} $ and its simple relationship to $ E $.

First, define a natural injection $ \phi : \vertices \hookrightarrow \vertices^{\opsymbol} $. Define $ \phi $ on $ \white $-vertices of $ G $ as follows: if $ O $ is an $ \whiteperm $-orbit containing neither $ a $ nor $ b $ then $ O $ is also an $ \whiteperm^{\opsymbol} $-orbit and $ \phi ( O ) $ is defined to be $ O $ again, if $ O $ is the $ \whiteperm $-orbit containing $ a $ then $ \phi ( O ) $ is defined to be the $ \whiteperm^{\opsymbol} $-orbit containing $ a_{\white} $, and if $ O $ is the $ \whiteperm $-orbit containing $ b $ then $ \phi ( O ) $ is defined to be the $ \whiteperm^{\opsymbol} $-orbit containing $ a_{\black} $. \emph{Note that there is no conflict if $ a, b $ represent the same $ \whiteperm $-orbit.} Similarly, define $ \phi $ on $ \black $-vertices of $ G $ as follows: if $ O $ is an $ \blackperm $-orbit not containing $ a $ then $ O $ is also an $ \blackperm^{\opsymbol} $-orbit and $ \phi ( O ) $ is defined to be $ O $ again, and if $ O $ is the $ \blackperm $-orbit containing $ a $ then $ \phi ( O ) $ is defined to be the $ \blackperm^{\opsymbol} $-orbit containing $ a_{\black} $. Note, by Definition \ref{DFNfundamentaloperation}, that $ G^{\opsymbol} $ contains only one more vertex: the $ \black $-vertex of $ a_{\white} $, i.e. the vertex corresponding to the $ \blackperm^{\opsymbol} $-orbit containing $ a_{\white} $, a singleton.

Now, let $ e \subset G \setminus a $ be an edge and let $ O_{\white} $ and $ O_{\black} $ be the $ \whiteperm $-orbit and $ \blackperm $-orbit corresponding to its vertices $ \white_e $ and $ \black_e $. By Definition \ref{DFNfundamentaloperation} and the previous paragraph, $ e $ is also contained within the $ \whiteperm^{\opsymbol} $-orbit $ \phi ( O_{\white} ) $ and the $ \blackperm^{\opsymbol} $-orbit $ \phi ( O_{\black} ) $.

The previous two paragraphs show that, via $ \phi : \vertices \hookrightarrow \vertices^{\opsymbol} $ and $ E \setminus \{ a \} \hookrightarrow E^{\opsymbol} $, $ G \setminus a $ is a subgraph of $ G^{\opsymbol} $.

It is immediate from Definition \ref{DFNfundamentaloperation} that $ G^{\opsymbol} $ contains only one more vertex than $ G $, the vertex $ v $ in the statement of this Lemma: $ \vertices^{\opsymbol} \setminus \phi ( \vertices ) $ consists of the vertex corresponding to the $ \blackperm^{\opsymbol} $-orbit containing $ a_{\white} $, a singleton. It is also immediate from Definition \ref{DFNfundamentaloperation} that $ G^{\opsymbol} $ contains only two more edges than $ G \setminus a $: the edges $ a_{\white} $ and $ a_{\black} $. It is immediate from the Definition \ref{DFNfundamentaloperation} and the first paragraph of this proof that $ a_{\white} $ and $ a_{\black} $ are incident to vertices as described in the statement of this Lemma. This concludes the proof of the first statement.

Now, assume that $ G $ is connected. It is trivial that if $ G^{\opsymbol} $ is connected then there is such a walk. Conversely, suppose that there is such a walk. If $ G \setminus a $ is connected then this is obvious from what was already proved: $ G^{\opsymbol} $ is constructed from $ G \setminus a $ by attaching edges. So, I can assume that $ G \setminus a $ is disconnected. Necessarily, $ G \setminus a = G_{\white} \sqcup G_{\black} $, where $ G_{\white}, G_{\black} $ are connected and $ \white_a \in G_{\white} $, $ \black_a \in G_{\black} $. To prove that $ G^{\opsymbol} $ is connected, it is equivalent to prove that if $ x, y \in G^{\opsymbol} $ are vertices then there is a walk in $ G^{\opsymbol} $ from $ x $ to $ y $. Since $ G^{\opsymbol} $ contains only one new vertex, the $ \black $-vertex of edge $ a_{\white} $, I can assume that $ x, y $ are vertices of $ G $. Since $ G_{\white}, G_{\black} $ are connected, I can also assume that $ x \in G_{\white} $ and $ y \in G_{\black} $. But the claim is now obvious: concatenate walks from $ x $ to $ \white_a $ and from $ \black_a $ to $ y $ with the assumed walk from $ \white_a $ to $ \black_a $.
\end{proof}

\section{Iteration of the Operation} \label{Siteration}

\begin{center}
\emph{Throughout this section, $ E $ is a finite set and $ ( \whiteperm, \blackperm ) $ is an arbitrary pair in the symmetric group $ S_E $. Fix distinct $ a, b \in E $ and let $ ( \whiteperm^{\opsymbol}, \blackperm^{\opsymbol} ) $ be the reroute of $ ( \whiteperm, \blackperm ) $ relative to $ ( a, b ) $.}
\end{center}

\begin{dfn} \label{DFNdoublereroute}
$ ( \whiteperm^{\opsymbol \opsymbol}, \blackperm^{\opsymbol \opsymbol} ) $ is defined to be the reroute of $ ( \whiteperm^{\opsymbol}, \blackperm^{\opsymbol} ) $ relative to $ ( b, a_{\white} ) $. The natural set $ E^{\opsymbol \opsymbol} $ is, By Definition \ref{DFNfundamentaloperation}, $ E^{\opsymbol \opsymbol} \defeq \{ a_{\white}, a_{\black}, b_{\white}, b_{\black} \} \sqcup E \setminus \{ a, b \} $.
\end{dfn}

Recall that if $ \perm, s \in S_E $ then $ \perm^s \defeq s \cdot \perm \cdot s^{-1} $. As promised in \S\ref{SSdefofoperation}, the following is the relationship of the operation $ \opsymbol $ to conjugation:

\begin{prp} \label{PRPconjugationviaoperations}
Let $ t \in S_E $ be the transposition exchanging $ a $ and $ b $. Let $ ( \whiteperm^{\opsymbol \opsymbol}, \blackperm^{\opsymbol \opsymbol} ) $ be as in Definition \ref{DFNdoublereroute}. \Assertions $ ( \whiteperm^t, \blackperm ) $ is transitive if and only if $ ( \whiteperm^{\opsymbol \opsymbol}, \blackperm^{\opsymbol \opsymbol} ) $ is transitive and the synthetic genus of $ ( \whiteperm^t, \blackperm ) $ is the same as that of $ ( \whiteperm^{\opsymbol \opsymbol}, \blackperm^{\opsymbol \opsymbol} ) $.
\end{prp}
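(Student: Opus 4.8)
My plan is to compute the double reroute $(\whiteperm^{\opsymbol\opsymbol},\blackperm^{\opsymbol\opsymbol})$ explicitly from Definition~\ref{DFNfundamentaloperation}, recognize it as the pair $(\whiteperm^t,\blackperm)$ decorated by two harmless ``bookmark'' edges, and then read off both the transitivity equivalence and the equality of synthetic genus (the latter being, as stated, unconditional). First I would unwind the two reroutes. Applying Definition~\ref{DFNfundamentaloperation} once relative to $(a,b)$ and again relative to $(b,a_{\white})$ shows that $\blackperm^{\opsymbol\opsymbol}$ is $\blackperm$ with the symbol $a$ renamed $a_{\black}$ and $b$ renamed $b_{\black}$, together with the two new trivial cycles $(a_{\white})$ and $(b_{\white})$; in particular $a_{\white},b_{\white}$ are fixed points of $\blackperm^{\opsymbol\opsymbol}$. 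For $\whiteperm^{\opsymbol\opsymbol}$: the first reroute renames $a\to a_{\white}$ and inserts $a_{\black}$ immediately before $b$, and the second renames $b\to b_{\white}$ and inserts $b_{\black}$ immediately before $a_{\white}$; thus $b_{\black}$ is the unique $\whiteperm^{\opsymbol\opsymbol}$-predecessor of $a_{\white}$, and $\whiteperm^{\opsymbol\opsymbol}(a_{\black})=b_{\white}$. The crucial observation is that the permutation $\bar\whiteperm$ of $E^{\opsymbol\opsymbol}\setminus\{a_{\white},b_{\white}\}$ obtained from $\whiteperm^{\opsymbol\opsymbol}$ by deleting the two bookmark tokens (i.e.\ composing past them) is, after the relabeling $a_{\black}\leftrightarrow a$, $b_{\black}\leftrightarrow b$, exactly $\whiteperm^t$: each token was inserted immediately before its companion ($b_{\black}$ before $a_{\white}$, and $a_{\black}$ in the slot that later becomes $b_{\white}$), so deleting it leaves the companion in the old slot of $a$, resp.\ of $b$ --- which is precisely the transposition of the symbols $a$ and $b$ in $\whiteperm$. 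Writing $\bar\blackperm$ for the corresponding deletion (= restriction) of $\blackperm^{\opsymbol\opsymbol}$, the pair $(\bar\whiteperm,\bar\blackperm)$ therefore equals $(\whiteperm^t,\blackperm)$ up to relabeling.

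For the transitivity equivalence I would argue with models. Fix a nondegenerate model $(G,X)$ of $(\whiteperm,\blackperm)$. Applying Lemma~\ref{LEMmodelsforoperation} twice, the graph $G^{\opsymbol\opsymbol}$ underlying any model of $(\whiteperm^{\opsymbol\opsymbol},\blackperm^{\opsymbol\opsymbol})$ is $G\setminus\{a,b\}$ together with two new degree-one $\black$-vertices $v_a,v_b$ and four new edges: $a_{\white}$ joining $\white_a$ to $v_a$, $b_{\white}$ joining $\white_b$ to $v_b$, $a_{\black}$ joining $\black_a$ to $\white_b$, and $b_{\black}$ joining $\black_b$ to $\white_a$. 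On the other hand, since $\whiteperm^t$ is $\whiteperm$ with $a$ and $b$ transposed, a model $G'$ of $(\whiteperm^t,\blackperm)$ is $G\setminus\{a,b\}$ with edge $a$ re-attached to join $\black_a$ and $\white_b$ and edge $b$ re-attached to join $\black_b$ and $\white_a$. Hence $G^{\opsymbol\opsymbol}$ is precisely $G'$ (with $a,b$ renamed $a_{\black},b_{\black}$) plus the two pendant edges $a_{\white},b_{\white}$ hanging at the already-present vertices $\white_a,\white_b$. Adjoining pendant edges does not change the number of connected components, and both $G^{\opsymbol\opsymbol}$ and $G'$ are nondegenerate, so Lemma~\ref{LEMtransiffconn} gives: $(\whiteperm^{\opsymbol\opsymbol},\blackperm^{\opsymbol\opsymbol})$ is transitive $\iff$ $G^{\opsymbol\opsymbol}$ is connected $\iff$ $G'$ is connected $\iff$ $(\whiteperm^t,\blackperm)$ is transitive.

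For the synthetic genus I would use the deletion from the first paragraph. Since relabeling affects neither $\numcycles$ nor cardinality, $\chi_{(\bar\whiteperm,\bar\blackperm)}=\chi_{(\whiteperm^t,\blackperm)}$, so it suffices to prove $\chi_{(\whiteperm^{\opsymbol\opsymbol},\blackperm^{\opsymbol\opsymbol})}=\chi_{(\bar\whiteperm,\bar\blackperm)}$. Passing from $E^{\opsymbol\opsymbol}$ to $E^{\opsymbol\opsymbol}\setminus\{a_{\white},b_{\white}\}$ drops the cardinality by $2$; deleting the $\blackperm^{\opsymbol\opsymbol}$-fixed points $a_{\white},b_{\white}$ drops $\numcycles(\blackperm^{\opsymbol\opsymbol})$ by $2$; and a short check --- using that $b_{\black}$ is the unique $\whiteperm^{\opsymbol\opsymbol}$-predecessor of $a_{\white}$ and that $a_{\white},b_{\white}$ are fixed by $\blackperm^{\opsymbol\opsymbol}$ --- shows that $a_{\white},b_{\white}$ are fixed points of neither $\whiteperm^{\opsymbol\opsymbol}$ nor $\whiteperm^{\opsymbol\opsymbol}\blackperm^{\opsymbol\opsymbol}$ and form a $2$-cycle of neither, so the two deletions leave $\numcycles(\whiteperm^{\opsymbol\opsymbol})$ and $\numcycles(\whiteperm^{\opsymbol\opsymbol}\blackperm^{\opsymbol\opsymbol})$ unchanged. (One also uses the elementary fact that deleting a point fixed by $\blackperm^{\opsymbol\opsymbol}$ commutes with forming the product, so that $\bar\whiteperm\bar\blackperm$ really is the corresponding deletion of $\whiteperm^{\opsymbol\opsymbol}\blackperm^{\opsymbol\opsymbol}$.) These four changes cancel in the formula $\chi_{(\whiteperm',\blackperm')}=\numcycles(\whiteperm')+\numcycles(\blackperm')-\vert E'\vert+\numcycles(\whiteperm'\blackperm')$, giving $\chi_{(\whiteperm^{\opsymbol\opsymbol},\blackperm^{\opsymbol\opsymbol})}=\chi_{(\whiteperm^t,\blackperm)}$ and hence equality of synthetic genus.

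The main obstacle is the cycle-bookkeeping of the first paragraph --- proving that after the two bookmark edges are deleted the double reroute reproduces $\whiteperm^t$ on the nose, and the small verifications that $a_{\white},b_{\white}$ never become fixed points of $\whiteperm^{\opsymbol\opsymbol}$ or of $\whiteperm^{\opsymbol\opsymbol}\blackperm^{\opsymbol\opsymbol}$. Everything there is elementary, but, as throughout Orbit Transfer Lemma~\ref{LEMorbittransfer}, one must treat the degenerate configurations $\whiteperm(a)=a$, $\whiteperm(a)=b$, $\whiteperm(b)=a$, and ``$a,b$ adjacent in one $\whiteperm$-cycle'' separately; in each of these the conclusions survive unchanged.
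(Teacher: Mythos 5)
Your proof is correct, and its opening move coincides with the paper's: both write out $ \whiteperm^{\opsymbol\opsymbol} $ and $ \blackperm^{\opsymbol\opsymbol} $ explicitly and observe that $ a_{\white}, b_{\white} $ are fixed points of $ \blackperm^{\opsymbol\opsymbol} $, so that away from these two bookmarks the pair is a relabelled copy of $ ( \whiteperm^t, \blackperm ) $. Where you diverge is the transitivity half: you pass to nondegenerate models, apply Lemma \ref{LEMmodelsforoperation} twice, note that $ G^{\opsymbol\opsymbol} $ is the model of $ ( \whiteperm^t, \blackperm ) $ with two pendant edges attached, and invoke Lemma \ref{LEMtransiffconn}; the paper instead stays entirely inside the symmetric group, observing that each $ \whiteperm^{\opsymbol\opsymbol} $-orbit is an orbit of the relabelled $ \whiteperm^t $ with possibly $ a_{\white} $ and/or $ b_{\white} $ adjoined, that the corresponding unions of orbits are $ \blackperm^{\opsymbol\opsymbol} $-stable simultaneously because the bookmarks are $ \blackperm^{\opsymbol\opsymbol} $-fixed, and then applying the criterion that a pair is transitive iff no proper nonempty union of orbits of one member is stable under the other. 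The purely permutation-theoretic route avoids any appeal to models (and hence to the implicit fact that the underlying graph of a nondegenerate model is canonically determined by the pair), while your topological route is more visual and reuses machinery already built for the Transitivity Theorem; both are sound. For the genus half the two arguments reduce to the same count ($ \vert E^{\opsymbol\opsymbol} \vert = \vert E \vert + 2 $, $ \numcycles ( \blackperm^{\opsymbol\opsymbol} ) = \numcycles ( \blackperm ) + 2 $, $ \numcycles ( \whiteperm^{\opsymbol\opsymbol} ) = \numcycles ( \whiteperm^t ) $, $ \numcycles ( \whiteperm^{\opsymbol\opsymbol} \blackperm^{\opsymbol\opsymbol} ) = \numcycles ( \whiteperm^t \blackperm ) $); the paper essentially asserts the last equality, whereas your deletion bookkeeping (the bookmarks are fixed by $ \blackperm^{\opsymbol\opsymbol} $, are neither fixed points nor a $2$-cycle of $ \whiteperm^{\opsymbol\opsymbol} $ or of the product, and deleting $ \blackperm^{\opsymbol\opsymbol} $-fixed points commutes with forming the product) actually proves it, which is a small but genuine gain in completeness, provided you do carry out the degenerate cases $ \whiteperm ( a ) = a $, $ \whiteperm ( a ) = b $, $ \whiteperm ( b ) = a $ that you flag at the end.
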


Graphically, this is fairly intuitive, but that argument is unrigorous.

\begin{proof}
By Definition \ref{DFNfundamentaloperation}, $ \whiteperm^{\opsymbol \opsymbol} $ is constructed from $ \whiteperm $ as follows: \emph{Insert the symbol $ a_{\black} $ immediately before $ b $, insert the symbol $ b_{\black} $ immediately before $ a $, replace the symbols $ a, b $ by $ a_{\white}, b_{\white} $.} Similarly, $ \blackperm^{\opsymbol \opsymbol} $ is constructed from $ \blackperm $ as follows: \emph{Replace $ a $ by the symbol $ a_{\black} $, replace $ b $ by the symbol $ b_{\black} $, introduce the trivial cycles $ ( a_{\white} ) $ and $ ( b_{\white} ) $.}

Now, let $ \phi : S_E \hookrightarrow S_{E^{\opsymbol \opsymbol}} $ be induced by the injection $ E \hookrightarrow E^{\opsymbol \opsymbol} $ defined by $ a \mapsto a_{\black} $, $ b \mapsto b_{\black} $, $ x \mapsto x $ for all $ x \in E \setminus \{ a, b \} $. It is immediate that $ \phi ( \blackperm ) = \blackperm^{\opsymbol \opsymbol} $. Set $ \whiteperm^{\prime \prime} \defeq \phi ( t \cdot \whiteperm \cdot t^{-1} ) $, and note that this is the same as the conjugate of $ \phi ( \whiteperm ) $ by the transposition $ ( a_{\black}, b_{\black} ) $. Set $ E^{\prime \prime} \defeq E^{\opsymbol \opsymbol} \setminus \{ a_{\white}, b_{\white} \} $. Clearly, $ ( \whiteperm^{\prime \prime}, \blackperm^{\opsymbol \opsymbol} ) $ permutes $ E^{\prime \prime} $ and $ ( \whiteperm^t, \blackperm ) $ is transitive if and only if $ ( \whiteperm^{\prime \prime}, \blackperm^{\opsymbol \opsymbol} ) $ is transitive on $ E^{\prime \prime} $. Thus, it is equivalent to show that $ ( \whiteperm^{\prime \prime}, \blackperm^{\opsymbol \opsymbol} ) $ is transitive if and only if $ ( \whiteperm^{\opsymbol \opsymbol}, \blackperm^{\opsymbol \opsymbol} ) $ is transitive.

It is immediate from the cycle structures of $ \whiteperm^{\opsymbol \opsymbol} $ and $ \whiteperm^{\prime \prime} $ that if $ O^{\prime \prime} \subset E^{\prime \prime} $ is an $ \whiteperm^{\prime \prime} $-orbit then there is a unique $ \whiteperm^{\opsymbol \opsymbol} $-orbit $ O^{\opsymbol \opsymbol} $ satisfying $ O^{\prime \prime} \subset O^{\opsymbol \opsymbol} \subset O^{\prime \prime} \cup \{ a_{\white}, b_{\white} \} $, and all $ \whiteperm^{\opsymbol \opsymbol} $-orbits are produced in this way. Since $ a_{\white}, b_{\white} $ are fixed points of $ \blackperm^{\opsymbol \opsymbol} $, it is clear that $ O^{\prime \prime} $ is $ \blackperm^{\opsymbol \opsymbol} $-stable if and only if $ O^{\opsymbol \opsymbol} $ is $ \blackperm^{\opsymbol \opsymbol} $-stable.

The first claim now follows from what is possibly the most trivial observation ever to appear in print: \emph{A pair $ ( A, B ) $ is transitive if and only if no union of $ A $-orbits is $ B $-stable except $ \emptyset $ and $ E $.}

The second claim is immediate from the information above: $ \vert E^{\opsymbol \opsymbol} \vert = \vert E \vert + 2 $, $ \numcycles ( \blackperm^{\opsymbol \opsymbol} ) = \numcycles ( \blackperm ) + 2 $, $ \numcycles ( \whiteperm^{\opsymbol \opsymbol} ) = \numcycles ( \whiteperm ) = \numcycles ( \whiteperm^t ) $, $ \numcycles ( \whiteperm^{\opsymbol \opsymbol} \blackperm^{\opsymbol \opsymbol} ) = \numcycles ( \whiteperm^t \blackperm ) $ \comment{EXPLAIN MORE LAST EQUALITY}.
\end{proof}

Therefore, I need to understand the Type of $ ( \whiteperm^{\opsymbol}, \blackperm^{\opsymbol} ) $ relative to $ ( b, a_{\white} ) $ in terms of the initial data $ ( \whiteperm, \blackperm ) $ and $ ( a, b ) $. I refer to this goal below as ``branching''. This obviously requires knowledge of $ \whiteperm^{\opsymbol} ( b ) $. The following simple observations, which are immediate from Definition \ref{DFNfundamentaloperation} of $ \whiteperm^{\opsymbol} $, will be useful:

\begin{itemize}
\item[-] If $ \whiteperm ( b ) = b $ then $ \whiteperm^{\opsymbol} ( b ) = a_{\black} $.

\item[-] If $ \whiteperm ( b ) = a $ then $ \whiteperm^{\opsymbol} ( b ) = a_{\white} $.

\item[-] If $ \whiteperm ( b ) \neq a, b $ then $ \whiteperm^{\opsymbol} ( b ) = \whiteperm ( b ) $.

\item[-] Since $ \whiteperm^{\opsymbol} ( a_{\black} ) = b $ always, $ \whiteperm^{\opsymbol} ( b ) \neq b $ always.
\end{itemize}

The reader will observe that situations $ \whiteperm ( b ) = b $ or $ \whiteperm ( b ) = a $ are, after exchanging $ a $ and $ b $, precisely those that required special treatment by the proofs in \S\ref{Sfundamentaloperation}. This is expected.

\subsection{Type U Branching}

By the proof of the Reroute Theorem \ref{THMgenus}, all of $ a_{\white}, a_{\black}, b $ represent the same $ \whiteperm^{\opsymbol} \blackperm^{\opsymbol} $-orbit, and the $ \whiteperm^{\opsymbol} \blackperm^{\opsymbol} $-cycle containing them is of the form $ ( \ldots a_{\black} \ldots a_{\white} \ldots b \ldots ) $. As usual, I check the position of $ \whiteperm^{\opsymbol} ( b ) $ relative to $ b, a_{\white} $.

If $ \whiteperm ( b ) = b $ then $ \whiteperm^{\opsymbol} ( b ) = a_{\black} $, so $ ( \whiteperm^{\opsymbol}, \blackperm^{\opsymbol} ) $ is Type P1. If $ \whiteperm ( b ) = a $ then $ \whiteperm^{\opsymbol} ( b ) = a_{\white} $, so $ ( \whiteperm^{\opsymbol}, \blackperm^{\opsymbol} ) $ is Type P1. So, I can assume that $ \whiteperm ( b ) \neq b, a $, in which case $ \whiteperm^{\opsymbol} ( b ) = \whiteperm ( b ) $. Note that $ \whiteperm ( b ) \neq \whiteperm ( a ) $ also, due to distinctness of $ a, b $.

If $ \whiteperm ( b ) $ represents the same $ \whiteperm \blackperm $-orbit as $ a $ then Orbit Transfer Lemma \ref{LEMorbittransfer} (\ref{LorbittransferC}) says that $ \whiteperm^{\opsymbol} ( b ) $ is contained in the $ \whiteperm^{\opsymbol} \blackperm^{\opsymbol} $-arc from $ a_{\black} $ to $ a_{\white} $. Therefore, $ ( \whiteperm^{\opsymbol}, \blackperm^{\opsymbol} ) $ is Type P1.

If $ \whiteperm ( b ) $ represents the same $ \whiteperm \blackperm $-orbit as $ \whiteperm ( a ) $ then Orbit Transfer Lemma \ref{LEMorbittransfer} (\ref{LorbittransferB}) says that $ \whiteperm^{\opsymbol} ( b ) $ is contained in the $ \whiteperm^{\opsymbol} \blackperm^{\opsymbol} $-arc from $ a_{\white} $ to $ b $. Therefore, $ ( \whiteperm^{\opsymbol}, \blackperm^{\opsymbol} ) $ is Type N.

If $ \whiteperm ( b ) $ represents the same $ \whiteperm \blackperm $-orbit as $ b $ then Orbit Transfer Lemma \ref{LEMorbittransfer} (\ref{LorbittransferD}) says that $ \whiteperm^{\opsymbol} ( b ) $ is contained in the $ \whiteperm^{\opsymbol} \blackperm^{\opsymbol} $-arc from $ b $ to $ a_{\black} $. Therefore, $ ( \whiteperm^{\opsymbol}, \blackperm^{\opsymbol} ) $ is Type P1.

If $ \whiteperm ( b ) $ represents a different $ \whiteperm \blackperm $-orbit than all of $ a, \whiteperm ( a ), b $ then it is clear from Lemma \ref{LEMunbiasedorbitsalso} that $ \whiteperm^{\opsymbol} ( b ) $ represents a different $ \whiteperm^{\opsymbol} \blackperm^{\opsymbol} $-orbit than all of $ a_{\white}, a_{\black}, b $, so $ ( \whiteperm^{\opsymbol}, \blackperm^{\opsymbol} ) $ is Type P3.

These facts can be summarized:

\begin{enumerate}
\setlength{\itemsep}{5pt}

\item \label{TYPEU1} If $ \whiteperm ( b ) $ represents the same $ \whiteperm \blackperm $-orbit as $ a $ then $ ( \whiteperm^{\opsymbol}, \blackperm^{\opsymbol} ) $ is Type P1 relative to $ ( b, a_{\white} ) $.

\item \label{TYPEU2} If $ \whiteperm ( b ) $ represents the same $ \whiteperm \blackperm $-orbit as $ \whiteperm ( a ) $ then $ ( \whiteperm^{\opsymbol}, \blackperm^{\opsymbol} ) $ is Type N relative to $ ( b, a_{\white} ) $.

\item \label{TYPEU3} If $ \whiteperm ( b ) $ represents the same $ \whiteperm \blackperm $-orbit as $ b $ then $ ( \whiteperm^{\opsymbol}, \blackperm^{\opsymbol} ) $ is Type P1 relative to $ ( b, a_{\white} ) $.

\item \label{TYPEU4} If $ \whiteperm ( b ) $ represents a different $ \whiteperm \blackperm $-orbit than all of $ a, \whiteperm ( a ), b $ then $ ( \whiteperm^{\opsymbol}, \blackperm^{\opsymbol} ) $ is Type P3 relative to $ ( b, a_{\white} ) $.
\end{enumerate}

\subsection{Type N Branching}

By the proof of the Reroute Theorem \ref{THMgenus}, no two of $ a_{\white}, a_{\black}, b $ represent the same $ \whiteperm^{\opsymbol} \blackperm^{\opsymbol} $-orbit. As usual, I check the position of $ \whiteperm^{\opsymbol} ( b ) $ relative to $ b, a_{\white} $.

If $ \whiteperm ( b ) = b $ then $ \whiteperm^{\opsymbol} ( b ) = a_{\black} $, so $ ( \whiteperm^{\opsymbol}, \blackperm^{\opsymbol} ) $ is Type U. If $ \whiteperm ( b ) = a $ then $ \whiteperm^{\opsymbol} ( b ) = a_{\white} $, so $ ( \whiteperm^{\opsymbol}, \blackperm^{\opsymbol} ) $ is Type P4. So, I can assume that $ \whiteperm ( b ) \neq b, a $, in which case $ \whiteperm^{\opsymbol} ( b ) = \whiteperm ( b ) $. Note that $ \whiteperm ( b ) \neq \whiteperm ( a ) $ also, due to distinctness of $ a, b $.

If $ \whiteperm ( b ) $ is contained in the $ \whiteperm \blackperm $-arc from $ a $ to $ b $ then Orbit Transfer Lemma \ref{LEMorbittransfer} (\ref{LorbittransferP3N}) says that $ \whiteperm^{\opsymbol} ( b ) $ represents the same $ \whiteperm^{\opsymbol} \blackperm^{\opsymbol} $-orbit as $ a_{\black} $. Therefore, $ ( \whiteperm^{\opsymbol}, \blackperm^{\opsymbol} ) $ is Type U.

If $ \whiteperm ( b ) $ is contained in the $ \whiteperm \blackperm $-arc from $ b $ to $ \whiteperm ( a ) $ then Orbit Transfer Lemma \ref{LEMorbittransfer} (\ref{LorbittransferP4N}) says that $ \whiteperm^{\opsymbol} ( b ) $ represents the same $ \whiteperm^{\opsymbol} \blackperm^{\opsymbol} $-orbit as $ b $. Therefore, $ ( \whiteperm^{\opsymbol}, \blackperm^{\opsymbol} ) $ is Type P2.

If $ \whiteperm ( b ) $ is contained in the $ \whiteperm \blackperm $-arc from $ \whiteperm ( a ) $ to $ a $ then Orbit Transfer Lemma \ref{LEMorbittransfer} (\ref{LorbittransferP2N}) says that $ \whiteperm^{\opsymbol} ( b ) $ represents the same $ \whiteperm^{\opsymbol} \blackperm^{\opsymbol} $-orbit as $ a_{\white} $. Therefore, so $ ( \whiteperm^{\opsymbol}, \blackperm^{\opsymbol} ) $ is Type P4.

If $ \whiteperm ( b ) $ represents a different $ \whiteperm \blackperm $-orbit than all of $ a, \whiteperm ( a ), b $ then it is clear from Lemma \ref{LEMunbiasedorbitsalso} that $ \whiteperm^{\opsymbol} ( b ) $ represents a different $ \whiteperm^{\opsymbol} \blackperm^{\opsymbol} $-orbit than all of $ a_{\white}, a_{\black}, b $, so $ ( \whiteperm^{\opsymbol}, \blackperm^{\opsymbol} ) $ is Type U.

These facts can be summarized:

\begin{enumerate}
\setlength{\itemsep}{5pt}
\item \label{TYPEN1} If $ \whiteperm ( b ) $ is contained in the $ \whiteperm \blackperm $-arc from $ a $ to $ b $ then $ ( \whiteperm^{\opsymbol}, \blackperm^{\opsymbol} ) $ is Type U relative to $ ( b, a_{\white} ) $.

\item \label{tameexceptionalN} If $ \whiteperm ( b ) $ is contained in the $ \whiteperm \blackperm $-arc from $ b $ to $ \whiteperm ( a ) $ then $ ( \whiteperm^{\opsymbol}, \blackperm^{\opsymbol} ) $ is Type P2 relative to $ ( b, a_{\white} ) $.

\item \label{TYPEN3} If $ \whiteperm ( b ) $ is contained in the $ \whiteperm \blackperm $-arc from $ \whiteperm ( a ) $ to $ a $ then $ ( \whiteperm^{\opsymbol}, \blackperm^{\opsymbol} ) $ is Type P4 relative to $ ( b, a_{\white} ) $.

\item \label{TYPEN4} If $ \whiteperm ( b ) $ represents a different $ \whiteperm \blackperm $-orbit than all of $ a, \whiteperm ( a ), b $ then $ ( \whiteperm^{\opsymbol}, \blackperm^{\opsymbol} ) $ is Type U relative to $ ( b, a_{\white} ) $.
\end{enumerate}

For reasons that will be explained in \S\ref{Sclassificationbytransitivity}, one situation must be separated:

\begin{dfn}[Tame Exceptional \#1A] \label{DFNtameexceptional1A}
$ ( \whiteperm, \blackperm ) $ is \emph{Tame Exceptional} if it is Type N relative to $ ( a, b ) $ and situation (\ref{tameexceptionalN}) occurs.
\end{dfn}

This is the first of three ``tame exceptional'' cases that need to be separated. A more concrete description of this case is: \emph{$ a, \whiteperm ( a ), b, \whiteperm ( b ) $ represent the same $ \whiteperm \blackperm $-orbit, their $ \whiteperm \blackperm $-cycle is of the form $ ( \ldots \whiteperm ( b ) \ldots \whiteperm ( a ) \ldots a \ldots b \ldots ) $, and $ \whiteperm ( b ) \neq b $ (it is allowed that $ \whiteperm ( a ) = a $).}

\subsection{Type P1 Branching}

By the proof of the Reroute Theorem \ref{THMgenus}, all of $ a_{\white}, a_{\black}, b $ represent the same $ \whiteperm^{\opsymbol} \blackperm^{\opsymbol} $-orbit, and the $ \whiteperm^{\opsymbol} \blackperm^{\opsymbol} $-cycle containing them is of the form $ ( \ldots a_{\white} \ldots a_{\black} \ldots b \ldots ) $. As usual, I check the position of $ \whiteperm^{\opsymbol} ( b ) $ relative to $ b, a_{\white} $.

If $ \whiteperm ( b ) = b $ then $ \whiteperm^{\opsymbol} ( b ) = a_{\black} $, so $ ( \whiteperm^{\opsymbol}, \blackperm^{\opsymbol} ) $ is Type N. If $ \whiteperm ( b ) = a $ then $ \whiteperm^{\opsymbol} ( b ) = a_{\white} $, so $ ( \whiteperm^{\opsymbol}, \blackperm^{\opsymbol} ) $ is Type P1. So, I can assume that $ \whiteperm ( b ) \neq b, a $, in which case $ \whiteperm^{\opsymbol} ( b ) = \whiteperm ( b ) $. Note that $ \whiteperm ( b ) \neq \whiteperm ( a ) $ also, due to distinctness of $ a, b $.

If $ \whiteperm ( b ) $ is contained in the $ \whiteperm \blackperm $-arc from $ a $ to $ \whiteperm ( a ) $ then Orbit Transfer Lemma \ref{LEMorbittransfer} (\ref{LorbittransferP1P2}) says that $ \whiteperm^{\opsymbol} ( b ) $ is contained in the $ \whiteperm^{\opsymbol} \blackperm^{\opsymbol} $-arc from $ a_{\black} $ to $ b $. Therefore, $ ( \whiteperm^{\opsymbol}, \blackperm^{\opsymbol} ) $ is Type N.

If $ \whiteperm ( b ) $ is contained in the $ \whiteperm \blackperm $-arc from $ \whiteperm ( a ) $ to $ b $ then Orbit Transfer Lemma \ref{LEMorbittransfer} (\ref{LorbittransferP1P4}) says that $ \whiteperm^{\opsymbol} ( b ) $ is contained in the $ \whiteperm^{\opsymbol} \blackperm^{\opsymbol} $-arc from $ a_{\white} $ to $ a_{\black} $. Therefore, $ ( \whiteperm^{\opsymbol}, \blackperm^{\opsymbol} ) $ is Type N.

If $ \whiteperm ( b ) $ is contained in the $ \whiteperm \blackperm $-arc from $ b $ to $ a $ then Orbit Transfer Lemma \ref{LEMorbittransfer} (\ref{LorbittransferP1P3}) says that $ \whiteperm^{\opsymbol} ( b ) $ is contained in the $ \whiteperm^{\opsymbol} \blackperm^{\opsymbol} $-arc from $ b $ to $ a_{\white} $. Therefore, $ ( \whiteperm^{\opsymbol}, \blackperm^{\opsymbol} ) $ is Type P1.

If $ \whiteperm ( b ) $ represents a different $ \whiteperm \blackperm $-orbit than all of $ a, \whiteperm ( a ), b $ then it is clear from Lemma \ref{LEMunbiasedorbitsalso} that $ \whiteperm^{\opsymbol} ( b ) $ represents a different $ \whiteperm^{\opsymbol} \blackperm^{\opsymbol} $-orbit than all of $ a_{\white}, a_{\black}, b $, so $ ( \whiteperm^{\opsymbol}, \blackperm^{\opsymbol} ) $ is Type P3.

These facts can be summarized:

\begin{enumerate}
\setlength{\itemsep}{5pt}
\item \label{neverspherealwaystrans} If $ \whiteperm ( b ) $ is contained in the $ \whiteperm \blackperm $-arc from $ a $ to $ \whiteperm ( a ) $ then $ ( \whiteperm^{\opsymbol}, \blackperm^{\opsymbol} ) $ is Type N relative to $ ( b, a_{\white} ) $.

\item \label{tameexceptionalP1} If $ \whiteperm ( b ) $ is contained in the $ \whiteperm \blackperm $-arc from $ \whiteperm ( a ) $ to $ b $ then $ ( \whiteperm^{\opsymbol}, \blackperm^{\opsymbol} ) $ is Type N relative to $ ( b, a_{\white} ) $.

\item \label{TYPEP13} If $ \whiteperm ( b ) $ is contained in the $ \whiteperm \blackperm $-arc from $ b $ to $ a $ then $ ( \whiteperm^{\opsymbol}, \blackperm^{\opsymbol} ) $ is Type P1 relative to $ ( b, a_{\white} ) $.

\item \label{TYPEP14} If $ \whiteperm ( b ) $ represents a different $ \whiteperm \blackperm $-orbit than all of $ a, \whiteperm ( a ), b $ then $ ( \whiteperm^{\opsymbol}, \blackperm^{\opsymbol} ) $ is Type P3 relative to $ ( b, a_{\white} ) $.
\end{enumerate}

It is tempting to merge (\ref{neverspherealwaystrans}) and (\ref{tameexceptionalP1}), but this should not be done since they are fundamentally different. In fact, (\ref{tameexceptionalP1}) is the second ``tame exceptional'' case that must be separated:

\begin{dfn}[Tame Exceptional \#1B] \label{DFNtameexceptional1B}
$ ( \whiteperm, \blackperm ) $ is \emph{Tame Exceptional} if it is Type P1 relative to $ ( a, b ) $ and situation (\ref{tameexceptionalP1}) occurs.
\end{dfn}

A more concrete description of this case is: \emph{$ a, \whiteperm ( a ), b, \whiteperm ( b ) $ represent the same $ \whiteperm \blackperm $-orbit, their $ \whiteperm \blackperm $-cycle is of the form $ ( \ldots \whiteperm ( a ) \ldots \whiteperm ( b ) \ldots b \ldots a \ldots ) $, and $ \whiteperm ( a ) \neq a $ (it is allowed that $ \whiteperm ( b ) = b $).} Note that the two Tame Exceptional cases known so far are exchanged when $ a $ and $ b $ are exchanged.

\subsection{Type P2 Branching}

By the proof of the Reroute Theorem \ref{THMgenus}, $ a_{\black} $ and $ b $ represent the same $ \whiteperm^{\opsymbol} \blackperm^{\opsymbol} $-orbit and $ a_{\white} $ represents a different $ \whiteperm^{\opsymbol} \blackperm^{\opsymbol} $-orbit. As usual, I check the position of $ \whiteperm^{\opsymbol} ( b ) $ relative to $ b, a_{\white} $.

If $ \whiteperm ( b ) = b $ then $ \whiteperm^{\opsymbol} ( b ) = a_{\black} $, so $ ( \whiteperm^{\opsymbol}, \blackperm^{\opsymbol} ) $ is Type P2. If $ \whiteperm ( b ) = a $ then $ \whiteperm^{\opsymbol} ( b ) = a_{\white} $, so $ ( \whiteperm^{\opsymbol}, \blackperm^{\opsymbol} ) $ is Type P4. So, I can assume that $ \whiteperm ( b ) \neq b, a $, in which case $ \whiteperm^{\opsymbol} ( b ) = \whiteperm ( b ) $. Note that $ \whiteperm ( b ) \neq \whiteperm ( a ) $ also, due to distinctness of $ a, b $.

If $ \whiteperm ( b ) $ represents the same $ \whiteperm \blackperm $-orbit as $ b $ then Orbit Transfer Lemma \ref{LEMorbittransfer} (\ref{LorbittransferD}) says that $ \whiteperm^{\opsymbol} ( b ) $ is contained in the $ \whiteperm^{\opsymbol} \blackperm^{\opsymbol} $-arc from $ b $ to $ a_{\black} $. Therefore, $ ( \whiteperm^{\opsymbol}, \blackperm^{\opsymbol} ) $ is Type P2.

If $ \whiteperm ( b ) $ represents the same $ \whiteperm \blackperm $-orbit as $ a $ and $ \whiteperm ( b ) $ is contained in the $ \whiteperm \blackperm $-arc from $ \whiteperm ( a ) $ to $ a $ then Orbit Transfer Lemma \ref{LEMorbittransfer} (\ref{LorbittransferP2N}) says that $ \whiteperm^{\opsymbol} ( b ) $ is contained in the $ \whiteperm^{\opsymbol} \blackperm^{\opsymbol} $-orbit of $ a_{\white} $. Therefore, $ ( \whiteperm^{\opsymbol}, \blackperm^{\opsymbol} ) $ is Type P4.

If $ \whiteperm ( b ) $ represents the same $ \whiteperm \blackperm $-orbit as $ a $ but $ \whiteperm ( b ) $ is not contained in the $ \whiteperm \blackperm $-arc from $ \whiteperm ( a ) $ to $ a $ then Orbit Transfer Lemma \ref{LEMorbittransfer} (\ref{LorbittransferP1P2}) says that $ \whiteperm^{\opsymbol} ( b ) $ is contained in the $ \whiteperm^{\opsymbol} \blackperm^{\opsymbol} $-arc from $ a_{\black} $ to $ b $. Therefore, $ ( \whiteperm^{\opsymbol}, \blackperm^{\opsymbol} ) $ is Type P2.

If $ \whiteperm ( b ) $ represents a different $ \whiteperm \blackperm $-orbit than all of $ a, \whiteperm ( a ), b $ then it is clear from Lemma \ref{LEMunbiasedorbitsalso} that $ \whiteperm^{\opsymbol} ( b ) $ represents a different $ \whiteperm^{\opsymbol} \blackperm^{\opsymbol} $-orbit than all of $ a_{\white}, a_{\black}, b $. Therefore, $ ( \whiteperm^{\opsymbol}, \blackperm^{\opsymbol} ) $ is Type U.

These facts can be summarized:

\begin{enumerate}
\setlength{\itemsep}{5pt}
\item \label{TYPEP21} If $ \whiteperm ( b ) $ represents the same $ \whiteperm \blackperm $-orbit as $ a $ and $ \whiteperm ( b ) $ is contained in the $ \whiteperm \blackperm $-arc from $ \whiteperm ( a ) $ to $ a $ then $ ( \whiteperm^{\opsymbol}, \blackperm^{\opsymbol} ) $ is Type P4 relative to $ ( b, a_{\white} ) $.

\item \label{TYPEP22} If $ \whiteperm ( b ) $ represents the same $ \whiteperm \blackperm $-orbit as $ a $ but $ \whiteperm ( b ) $ is not contained in the $ \whiteperm \blackperm $-arc from $ \whiteperm ( a ) $ to $ a $ then $ ( \whiteperm^{\opsymbol}, \blackperm^{\opsymbol} ) $ is Type P2 relative to $ ( b, a_{\white} ) $.

\item \label{wildexceptional} If $ \whiteperm ( b ) $ represents the same $ \whiteperm \blackperm $-orbit as $ b $ then $ ( \whiteperm^{\opsymbol}, \blackperm^{\opsymbol} ) $ is Type P2 relative to $ ( b, a_{\white} ) $.

\item \label{TYPEP24} If $ \whiteperm ( b ) $ represents a different $ \whiteperm \blackperm $-orbit than all of $ a, \whiteperm ( a ), b $ then $ ( \whiteperm^{\opsymbol}, \blackperm^{\opsymbol} ) $ is Type U relative to $ ( b, a_{\white} ) $.
\end{enumerate}

For reasons that will be explained in \S\ref{Sclassificationbytransitivity}, one situation must be separated:

\begin{dfn}[Wild Exceptional] \label{DFNwildexceptional}
$ ( \whiteperm, \blackperm ) $ is \emph{Wild Exceptional} iff it is Type P2 relative to $ ( a, b ) $ and situation (\ref{wildexceptional}) occurs. In other words, iff $ ( \whiteperm, \blackperm ) $ is Type P2 relative to both $ ( a, b ) $ and $ ( b, a ) $.
\end{dfn}

A more concrete description of this case is: $ a $ and $ \whiteperm ( a ) $ represent the same $ \whiteperm \blackperm $-orbit, $ b $ and $ \whiteperm ( b ) $ represent the same $ \whiteperm \blackperm $-orbit, and the two orbits are different (it is allowed that $ \whiteperm ( a ) = a $ and/or $ \whiteperm ( b ) = b $).

\subsection{Type P3 Branching}

By the proof of the Reroute Theorem \ref{THMgenus}, $ a_{\white} $ and $ b $ represent the same $ \whiteperm^{\opsymbol} \blackperm^{\opsymbol} $-orbit and $ a_{\black} $ represents a different $ \whiteperm^{\opsymbol} \blackperm^{\opsymbol} $-orbit. As usual, I check the position of $ \whiteperm^{\opsymbol} ( b ) $ relative to $ b, a_{\white} $.

If $ \whiteperm ( b ) = b $ then $ \whiteperm^{\opsymbol} ( b ) = a_{\black} $, so $ ( \whiteperm^{\opsymbol}, \blackperm^{\opsymbol} ) $ is Type P3. If $ \whiteperm ( b ) = a $ then $ \whiteperm^{\opsymbol} ( b ) = a_{\white} $, so $ ( \whiteperm^{\opsymbol}, \blackperm^{\opsymbol} ) $ is Type P1. So, I can assume that $ \whiteperm ( b ) \neq b, a $, in which case $ \whiteperm^{\opsymbol} ( b ) = \whiteperm ( b ) $. Note that $ \whiteperm ( b ) \neq \whiteperm ( a ) $ also, due to distinctness of $ a, b $.

If $ \whiteperm ( b ) $ is contained in the $ \whiteperm \blackperm $-arc from $ a $ to $ b $ then Orbit Transfer Lemma \ref{LEMorbittransfer} (\ref{LorbittransferP3N}) says that $ \whiteperm^{\opsymbol} ( b ) $ represents the same $ \whiteperm^{\opsymbol} \blackperm^{\opsymbol} $-orbit as $ a_{\black} $. Therefore, $ ( \whiteperm^{\opsymbol}, \blackperm^{\opsymbol} ) $ is Type P3.

If $ \whiteperm ( b ) $ is contained in the $ \whiteperm \blackperm $-arc from $ b $ to $ a $ then Orbit Transfer Lemma \ref{LEMorbittransfer} (\ref{LorbittransferP1P3}) says that $ \whiteperm^{\opsymbol} ( b ) $ is contained in the $ \whiteperm^{\opsymbol} \blackperm^{\opsymbol} $-arc from $ b $ to $ a_{\white} $. Therefore, $ ( \whiteperm^{\opsymbol}, \blackperm^{\opsymbol} ) $ is Type P1.

If $ \whiteperm ( b ) $ represents the same $ \whiteperm \blackperm $-orbit as $ \whiteperm ( a ) $ then Orbit Transfer Lemma \ref{LEMorbittransfer} (\ref{LorbittransferB}) says that $ \whiteperm^{\opsymbol} ( b ) $ is contained in the $ \whiteperm^{\opsymbol} \blackperm^{\opsymbol} $-arc from $ a_{\white} $ to $ b $. Therefore, $ ( \whiteperm^{\opsymbol}, \blackperm^{\opsymbol} ) $ is Type N.

If $ \whiteperm ( b ) $ represents a different $ \whiteperm \blackperm $-orbit than all of $ a, \whiteperm ( a ), b $ then it is clear from Lemma \ref{LEMunbiasedorbitsalso} that $ \whiteperm^{\opsymbol} ( b ) $ represents a different $ \whiteperm^{\opsymbol} \blackperm^{\opsymbol} $-orbit than all of $ a_{\white}, a_{\black}, b $. Therefore, $ ( \whiteperm^{\opsymbol}, \blackperm^{\opsymbol} ) $ is Type P3.

These facts can be summarized:

\begin{enumerate}
\setlength{\itemsep}{5pt}

\item \label{TYPEP31} If $ \whiteperm ( b ) $ is contained in the $ \whiteperm \blackperm $-arc from $ a $ to $ b $ then $ ( \whiteperm^{\opsymbol}, \blackperm^{\opsymbol} ) $ is Type P3 relative to $ ( b, a_{\white} ) $.

\item \label{TYPEP32} If $ \whiteperm ( b ) $ is contained in the $ \whiteperm \blackperm $-arc from $ b $ to $ a $ then $ ( \whiteperm^{\opsymbol}, \blackperm^{\opsymbol} ) $ is Type P1 relative to $ ( b, a_{\white} ) $.

\item \label{tameexceptionalP3} If $ \whiteperm ( b ) $ represents the same $ \whiteperm \blackperm $-orbit as $ \whiteperm ( a ) $ then $ ( \whiteperm^{\opsymbol}, \blackperm^{\opsymbol} ) $ is Type N relative to $ ( b, a_{\white} ) $.

\item \label{TYPEP34} If $ \whiteperm ( b ) $ represents a different $ \whiteperm \blackperm $-orbit than all of $ a, \whiteperm ( a ), b $ then $ ( \whiteperm^{\opsymbol}, \blackperm^{\opsymbol} ) $ is Type P3 relative to $ ( b, a_{\white} ) $.
\end{enumerate}

The third and last ``tame exceptional'' situation that needs to be separated is:

\begin{dfn}[Tame Exceptional \#2] \label{DFNtameexceptional2}
$ ( \whiteperm, \blackperm ) $ is \emph{Tame Exceptional} if it is Type P3 relative to $ ( a, b ) $ and situation (\ref{tameexceptionalP3}) occurs.
\end{dfn}

A more concrete description of this case is: \emph{$ a, b $ represent the same $ \whiteperm \blackperm $-orbit, $ \whiteperm ( a ), \whiteperm ( b ) $ represent the same $ \whiteperm \blackperm $-orbit, and the two orbits are different.}

\subsection{Type P4 Branching}

By the proof of the Reroute Theorem \ref{THMgenus}, $ a_{\white} $ and $ a_{\black} $ represent the same $ \whiteperm^{\opsymbol} \blackperm^{\opsymbol} $-orbit and $ b $ represents a different $ \whiteperm^{\opsymbol} \blackperm^{\opsymbol} $-orbit. As usual, I check the position of $ \whiteperm^{\opsymbol} ( b ) $ relative to $ ( b, a_{\white} ) $.

If $ \whiteperm ( b ) = b $ then $ \whiteperm^{\opsymbol} ( b ) = a_{\black} $, so $ ( \whiteperm^{\opsymbol}, \blackperm^{\opsymbol} ) $ is Type P4. If $ \whiteperm ( b ) = a $ then $ \whiteperm^{\opsymbol} ( b ) = a_{\white} $, so $ ( \whiteperm^{\opsymbol}, \blackperm^{\opsymbol} ) $ is Type P4. So, I can assume that $ \whiteperm ( b ) \neq b, a $, in which case $ \whiteperm^{\opsymbol} ( b ) = \whiteperm ( b ) $. Note that $ \whiteperm ( b ) \neq \whiteperm ( a ) $ also, due to distinctness of $ a, b $.

If $ \whiteperm ( b ) $ represents the same $ \whiteperm \blackperm $-orbit as $ a $ then Orbit Transfer Lemma \ref{LEMorbittransfer} (\ref{LorbittransferC}) says that $ \whiteperm^{\opsymbol} ( b ) $ is contained in the $ \whiteperm^{\opsymbol} \blackperm^{\opsymbol} $-arc from $ a_{\black} $ to $ a_{\white} $. Therefore, $ ( \whiteperm^{\opsymbol}, \blackperm^{\opsymbol} ) $ is Type P4.

If $ \whiteperm ( b ) $ represents the same $ \whiteperm \blackperm $-orbit as $ b $ and is contained in the $ \whiteperm \blackperm $-arc from $ \whiteperm ( a ) $ to $ b $ then Orbit Transfer Lemma \ref{LEMorbittransfer} (\ref{LorbittransferP1P4}) says that $ \whiteperm^{\opsymbol} ( b ) $ is contained in the $ \whiteperm^{\opsymbol} \blackperm^{\opsymbol} $-arc from $ a_{\white} $ to $ a_{\black} $. Therefore, $ ( \whiteperm^{\opsymbol}, \blackperm^{\opsymbol} ) $ is Type P4.

If $ \whiteperm ( b ) $ represents the same $ \whiteperm \blackperm $-orbit as $ b $ and is not contained in the $ \whiteperm \blackperm $-arc from $ \whiteperm ( a ) $ to $ b $ then Orbit Transfer Lemma \ref{LEMorbittransfer} (\ref{LorbittransferP4N}) says that $ \whiteperm^{\opsymbol} ( b ) $ is contained in the $ \whiteperm^{\opsymbol} \blackperm^{\opsymbol} $-orbit of $ b $. Therefore, $ ( \whiteperm^{\opsymbol}, \blackperm^{\opsymbol} ) $ is Type P2.

If $ \whiteperm ( b ) $ represents a different $ \whiteperm \blackperm $-orbit than all of $ a, \whiteperm ( a ), b $ then it is clear from Lemma \ref{LEMunbiasedorbitsalso} that $ \whiteperm^{\opsymbol} ( b ) $ represents a different $ \whiteperm^{\opsymbol} \blackperm^{\opsymbol} $-orbit than all of $ a_{\white}, a_{\black}, b $. Therefore, $ ( \whiteperm^{\opsymbol}, \blackperm^{\opsymbol} ) $ is Type U.

These facts can be summarized:

\begin{enumerate}
\setlength{\itemsep}{5pt}

\item \label{TYPEP41} If $ \whiteperm ( b ) $ represents the same $ \whiteperm \blackperm $-orbit as $ a $ then $ ( \whiteperm^{\opsymbol}, \blackperm^{\opsymbol} ) $ is Type P4 relative to $ b, a_{\white} $.

\item \label{TYPEP43} If $ \whiteperm ( b ) $ represents the same $ \whiteperm \blackperm $-orbit as $ b $ and is contained in the $ \whiteperm \blackperm $-arc from $ \whiteperm ( a ) $ to $ b $ then $ ( \whiteperm^{\opsymbol}, \blackperm^{\opsymbol} ) $ is Type P4 relative to $ ( b, a_{\white} ) $.

\item \label{TYPEP42} If $ \whiteperm ( b ) $ represents the same $ \whiteperm \blackperm $-orbit as $ b $ and is \emph{not} contained in the $ \whiteperm \blackperm $-arc from $ \whiteperm ( a ) $ to $ b $ then $ ( \whiteperm^{\opsymbol}, \blackperm^{\opsymbol} ) $ is Type P2 relative to $ ( b, a_{\white} ) $.
 
\item \label{TYPEP44} If $ \whiteperm ( b ) $ represents a different $ \whiteperm \blackperm $-orbit than all of $ a, \whiteperm ( a ), b $ then $ ( \whiteperm^{\opsymbol}, \blackperm^{\opsymbol} ) $ is Type U relative to $ ( b, a_{\white} ) $.
\end{enumerate}

\section{Classification 1: by Transitivity} \label{Sclassificationbytransitivity}

\begin{center}
\emph{Throughout this section, $ ( D, X ) $ is a dessin d'enfant with edges $ \edges $ and monodromy pair $ ( \whiteperm, \blackperm ) $. Fix distinct $ a, b \in \edges $. Let $ ( \whiteperm^{\opsymbol}, \blackperm^{\opsymbol} ) $ be the reroute of $ ( \whiteperm, \blackperm ) $ relative to $ ( a, b ) $ and $ ( \whiteperm^{\opsymbol \opsymbol}, \blackperm^{\opsymbol \opsymbol} ) $ as in Definition \ref{DFNdoublereroute}. Finally, $ ( D^{\opsymbol}, X^{\opsymbol} ) $ is a nondegenerate model for $ ( \whiteperm^{\opsymbol}, \blackperm^{\opsymbol} ) $ and $ ( D^{\opsymbol \opsymbol}, X^{\opsymbol \opsymbol} ) $ is a nondegenerate model for $ ( \whiteperm^{\opsymbol \opsymbol}, \blackperm^{\opsymbol \opsymbol} ) $.}
\end{center}

Recall that if $ \perm, s \in S_{\edges} $ then $ \perm^s \defeq s \cdot \perm \cdot s^{-1} $. In the subsections below, I give an explicit and nearly complete answer to the following question:

\begin{center}
\emph{For which transpositions $ t \in S_{\edges} $ is $ ( \whiteperm^t, \blackperm ) $ transitive?}
\end{center}

I will also show that the exceptional cases in which I do not make any absolute assertion are genuinely ambiguous and equivalent to a more general question that I am currently unable to answer satisfactorily.

\subsection{The Non-Exceptional case} \label{SSnonexceptional}

For the convenience of the reader, I recall from \S\ref{Siteration} the concrete descriptions of the exceptional cases:

\begin{dfn}
The pair $ ( \whiteperm, \blackperm ) $ is 

\begin{itemize}
\setlength{\itemsep}{5pt}

\item[-] \emph{Tame Exceptional \#1} relative to $ ( a, b ) $ iff $ a, \whiteperm ( a ), b, \whiteperm ( b ) $ are distributed into $ \whiteperm \blackperm $-cycles as either $ ( \ldots \whiteperm ( b ) \ldots \whiteperm ( a ) \ldots a \ldots b \ldots ) $ with $ \whiteperm ( b ) \neq b $, or this after exchanging $ a $ and $ b $.

\item[-] \emph{Tame Exceptional \#2} relative to $ ( a, b ) $ iff $ a, \whiteperm ( a ), b, \whiteperm ( b ) $ are distributed into $ \whiteperm \blackperm $-cycles as $ ( \ldots a \ldots b \ldots ), ( \ldots \whiteperm ( a ) \ldots \whiteperm ( b ) \ldots ) $.

\item[-] \emph{Wild Exceptional} relative to $ ( a, b ) $ iff $ a, \whiteperm ( a ), b, \whiteperm ( b ) $ are distributed into $ \whiteperm \blackperm $-cycles as $ ( \ldots a \ldots \whiteperm ( a ) \ldots ), ( \ldots b \ldots \whiteperm ( b ) \ldots ) $.
\end{itemize}
\end{dfn}

Note that, among Exceptional cases, Wild Exceptional is the only one that preserves synthetic genus -- if $ ( \whiteperm, \blackperm ) $ is Tame Exceptional relative to $ ( a, b ) $ then $ g^{\opsymbol \opsymbol} = g - 1 $, for $ g $ the synthetic genus of $ ( \whiteperm, \blackperm ) $ and $ g^{\opsymbol \opsymbol} $ that of $ ( \whiteperm^{\opsymbol \opsymbol}, \blackperm^{\opsymbol \opsymbol} ) $.

\begin{transitivitytheorem} \label{THMtransitivity}
For $ t \in S_{\edges} $ the transposition exchanging $ a $ and $ b $, if $ ( \whiteperm, \blackperm ) $ is not Exceptional relative to $ ( a, b ) $ then $ ( \whiteperm^t, \blackperm ) $ is transitive.
\end{transitivitytheorem}

Analysis of the exceptional cases is given in subsections \S\ref{SSwildexceptional}, \S\ref{SStameexceptional} below; classification of $ t $ according to the genus of $ ( \whiteperm^t, \blackperm ) $ is given in \S\ref{Sclassificationbygenus}.

\begin{proof}
By Proposition \ref{PRPconjugationviaoperations}, it suffices to prove that $ ( \whiteperm^{\opsymbol \opsymbol}, \blackperm^{\opsymbol \opsymbol} ) $ is transitive. I heavily use notation of the form ``Type X(Y)'', in reference to the analysis of branching given in \S\ref{Siteration}. The proof is by cases.

Many cases can be eliminated by use of the following simple observation: \emph{If $ ( \whiteperm, \blackperm ) $ is Type U, Type P1, Type P3, or Type P4 relative to $ ( a, b ) $ then $ ( \whiteperm^{\opsymbol}, \blackperm^{\opsymbol} ) $ is transitive.} This is true by Lemma \ref{LEMmodelsforoperation} and the proof of the Reroute Theorem \ref{THMgenus}: $ a_{\white} $ and $ b $ represent the same $ \whiteperm^{\opsymbol} \blackperm^{\opsymbol} $-orbit, so there is a walk in $ D^{\opsymbol} $ from $ \white_a $ to $ \white_b $ (cf. \S\ref{Sarcs}), and then to $ \black_a $ along edge $ a_{\black} $. Applying this observation twice, it is seen that $ ( \whiteperm^{\opsymbol \opsymbol}, \blackperm^{\opsymbol \opsymbol} ) $ can only (in principle) be non-transitive if $ ( \whiteperm, \blackperm ) $ is Type U(\ref{TYPEU2}), Type N(\ref{TYPEN1}), Type N(\ref{TYPEN3}), Type N(\ref{TYPEN4}), Type P1(\ref{neverspherealwaystrans}), Type P2(\ref{TYPEP21}), Type P2(\ref{TYPEP22}), Type P2(\ref{TYPEP24}), Type P4(\ref{TYPEP42}). 

More cases can be eliminated by exploiting the symmetry of the claim with respect to $ a, b $: after exchanging $ a $ and $ b $, Type N(\ref{TYPEN3}) becomes Type P1(\ref{neverspherealwaystrans}), Type N(\ref{TYPEN4}) becomes Type P3(\ref{TYPEP31}) (known to be transitive by the previous paragraph), Type P2(\ref{TYPEP21}) becomes Type P4(\ref{TYPEP42}), Type P2(\ref{TYPEP22}) becomes Type P4(\ref{TYPEP43}) (also known), Type P2(\ref{TYPEP24}) becomes Type U(\ref{TYPEU3}) (also known).

Altogether, transitivity must be verified only for Type U(\ref{TYPEU2}), Type N(\ref{TYPEN1}), Type P1(\ref{neverspherealwaystrans}), Type P4(\ref{TYPEP42}). It is equivalent by Lemma \ref{LEMtransiffconn} to prove that $ D^{\opsymbol \opsymbol} $ is connected.

Suppose first that $ ( \whiteperm, \blackperm ) $ is Type U(\ref{TYPEU2}). In particular, $ ( \whiteperm^{\opsymbol}, \blackperm^{\opsymbol} ) $ is transitive, so Lemma \ref{LEMmodelsforoperation} says that it is equivalent to find a walk in $ D^{\opsymbol \opsymbol} $ from $ \white_b $ to $ \black_b $. Recall that $ a_{\black} \subset D^{\opsymbol} \setminus b \subset D^{\opsymbol \opsymbol} \supset b_{\black} $ where $ b_{\black} $ is the ``new'' edge from $ \black_b $ to $ \white_a $. Since $ a_{\black} $ connects $ \black_a $ to $ \white_b $, it is therefore sufficient to exhibit a walk in $ D^{\opsymbol} \setminus b $ from $ \white_a $ to $ \black_a $. By nature of Type U(\ref{TYPEU2}), $ \whiteperm^{\opsymbol} \blackperm^{\opsymbol} $ contains a cycle of the form $ ( \ldots \whiteperm^{\opsymbol} ( b ) \ldots b \ldots a_{\black} \ldots a_{\white} \ldots ) $. Let $ x_0, x_1, \ldots, x_n \in \edges^{\opsymbol} $ be the minimal $ \whiteperm^{\opsymbol} \blackperm^{\opsymbol} $-sequence from $ a_{\black} $ to $ a_{\white} $. As discussed in \S\ref{Sarcs}, this produces a walk, via the sequence of edges $ x_0, \blackperm^{\opsymbol} ( x_0 ), x_1, \ldots, \blackperm^{\opsymbol} ( x_{n-1} ), x_n $, from either vertex of $ x_0 $ to either vertex of $ x_n $. Thus, it remains only to check that this walk exists in $ D^{\opsymbol} \setminus b $, i.e. that $ b $ is not among the edges $ x_0, \blackperm^{\opsymbol} ( x_0 ), x_1, \ldots, \blackperm^{\opsymbol} ( x_{n-1} ), x_n $. Since $ \whiteperm^{\opsymbol} ( \blackperm^{\opsymbol} ( x_i ) ) = x_{i+1} $ for all $ 0 \leq i < n $, it suffices to show that $ x_i \neq b, \whiteperm^{\opsymbol} ( b ) $ for all $ 0 \leq i \leq n $. But this is obvious from the structure of the $ \whiteperm^{\opsymbol} \blackperm^{\opsymbol} $-cycle.

Suppose now that $ ( \whiteperm, \blackperm ) $ is Type P1(\ref{neverspherealwaystrans}). This proof is identical to that for Type U(\ref{TYPEU2}). Since $ ( \whiteperm^{\opsymbol}, \blackperm^{\opsymbol} ) $ is transitive, it is equivalent to have a walk in $ D^{\opsymbol \opsymbol} $ from $ \white_b $ to $ \black_b $. As before, it is sufficient to exhibit a walk in $ D^{\opsymbol} \setminus b $ from $ \white_a $ to $ \black_a $. This is done exactly as before: by nature of Type P1(\ref{neverspherealwaystrans}), $ \whiteperm^{\opsymbol} \blackperm^{\opsymbol} $ contains a cycle of the form $ ( \ldots \whiteperm^{\opsymbol} ( b ) \ldots b \ldots a_{\white} \ldots a_{\black} \ldots ) $, and the walk associated with the minimal $ \whiteperm^{\opsymbol} \blackperm^{\opsymbol} $-sequence from $ a_{\white} $ to $ a_{\black} $ is contained in $ D^{\opsymbol} \setminus b $.

Suppose that $ ( \whiteperm, \blackperm ) $ is Type P4(\ref{TYPEP42}). The proof is nearly identical to that for Type U(\ref{TYPEU2}) and Type P1(\ref{neverspherealwaystrans}): $ ( \whiteperm^{\opsymbol}, \blackperm^{\opsymbol} ) $ is transitive and $ \whiteperm^{\opsymbol} \blackperm^{\opsymbol} $ contains a cycle of the form $ ( \ldots a_{\white} \ldots a_{\black} \ldots ) $, with both $ b $ and $ \whiteperm^{\opsymbol} ( b ) $ contained in a different cycle, so there is a walk in $ D^{\opsymbol} \setminus b $ from $ \white_a $ to $ \black_a $.

Suppose finally that $ ( \whiteperm, \blackperm ) $ is Type N(\ref{TYPEN1}). The proof here is different, because it is possible that $ ( \whiteperm^{\opsymbol}, \blackperm^{\opsymbol} ) $ is \emph{not} transitive (so Lemma \ref{LEMmodelsforoperation} cannot be used). If $ ( \whiteperm^{\opsymbol}, \blackperm^{\opsymbol} ) $ is transitive then the claim is immediate by the second paragraph of this proof: by nature of Type N(\ref{TYPEN1}), $ ( \whiteperm^{\opsymbol}, \blackperm^{\opsymbol} ) $ is Type U relative to $ ( b, a_{\white} ) $. So, I can assume that $ D^{\opsymbol} $ is disconnected. Necessarily, $ D^{\opsymbol} = D^{\opsymbol}_{\white} \sqcup D^{\opsymbol}_{\black} $ with both $ D^{\opsymbol}_{\white}, D^{\opsymbol}_{\black} $ connected and $ \white_a \in D^{\opsymbol}_{\white} $, $ \black_a \in D^{\opsymbol}_{\black} $. Since the edge $ a_{\black} \subset D^{\opsymbol} $ connects $ \black_a $ to $ \white_b $, necessarily $ \white_b \in D^{\opsymbol}_{\black} $, which forces $ b \subset D^{\opsymbol}_{\black} $ and $ \black_b \in D^{\opsymbol}_{\black} $. Since the underlying graph of $ D^{\opsymbol \opsymbol} $ is formed from $ D^{\opsymbol} \setminus b $ by attaching a new edge $ b_{\black} $ between $ \black_b $ and $ \white_a $, it suffices to show that $ D^{\opsymbol}_{\black} \setminus b $ is connected. The monodromy pair $ ( \varsigma_{\white}, \varsigma_{\black} ) $ of the connected dessin d'famille $ ( D^{\opsymbol}_{\black}, X^{\opsymbol} ) $ is simply the restriction of $ ( \whiteperm^{\opsymbol}, \blackperm^{\opsymbol} ) $ to the edges of $ D^{\opsymbol}_{\black} $. In particular, the orbits of $ \varsigma_{\white} \varsigma_{\black} $ are a certain subset of the orbits of $ \whiteperm^{\opsymbol} \blackperm^{\opsymbol} $. By nature of Type N(\ref{TYPEN1}), $ b $ and $ \whiteperm^{\opsymbol} ( b ) $ represent different $ \whiteperm^{\opsymbol} \blackperm^{\opsymbol} $-orbits. Since, by Corollary \ref{CORedgedeletion}, $ D^{\opsymbol}_{\black} \setminus b $ disconnected implies that $ b $ and $ \varsigma_{\white} ( b ) $ represent the same $ \varsigma_{\white} \varsigma_{\black} $-orbit, $ D^{\opsymbol}_{\black} \setminus b $ must be connected.
\end{proof}

\begin{rmk}
Due to the fact that genus is never negative, the reader may wonder how it is possible that $ ( \whiteperm^t, \blackperm ) $ is \emph{always} transitive for certain genus-lowering $ t $, specifically Type N(\ref{TYPEN3}) and Type P1(\ref{neverspherealwaystrans}). The answer is that the surface of such a dessin d'enfant is never $ \sphere $. Something of the ``toral'' nature of these types can be seen from the $ \whiteperm \blackperm $-cycle containing $ a, \whiteperm ( a ), b, \whiteperm ( b ) $. For example, for Type N(\ref{TYPEN3}) there is a $ \whiteperm \blackperm $-cycle of the form $ ( \ldots a \ldots b \ldots \whiteperm ( a ) \ldots \whiteperm ( b ) \ldots ) $, which means that the boundary walk of the corresponding face is of the form $ \ldots \white, a, \black \ldots \white, b, \black \ldots \black, a, \white \ldots \black, b, \white \ldots $, an appearance of the ``square model'' of $ \torus $. The same is true for Type P1(\ref{neverspherealwaystrans}) after exchanging $ a $ and $ b $.
\end{rmk}

Here is a nice corollary in the spherical case:

\begin{cor} \label{CORspherecasetheorem}
Assume that $ X = \sphere $ and that $ ( \whiteperm, \blackperm ) $ is not Wild Exceptional relative to $ ( a, b ) $. \Assertion $ ( \whiteperm^t, \blackperm ) $ is transitive if and only if $ \numcycles ( \whiteperm^t \blackperm ) \leq \numcycles ( \whiteperm \blackperm ) $.
\end{cor}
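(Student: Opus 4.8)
The plan is to split the biconditional into its two implications. The forward direction (transitivity of $(\whiteperm^t, \blackperm)$ forces $\numcycles(\whiteperm^t\blackperm) \le \numcycles(\whiteperm\blackperm)$) will hold whenever $X = \sphere$, with no need for the hypothesis that $(\whiteperm, \blackperm)$ is not Wild Exceptional. For the reverse direction I would dispatch the non-Exceptional case by citing the Transitivity Theorem \ref{THMtransitivity} directly, and then observe that in the only remaining case — which, given that $(\whiteperm, \blackperm)$ is not Wild Exceptional, must be the Tame Exceptional case — the cycle-count hypothesis is never satisfied, so the implication holds vacuously.

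For the forward direction: if $(\whiteperm^t, \blackperm)$ is transitive then it is the monodromy pair of a dessin d'enfant, whose synthetic genus (Definition \ref{DFNsyntheticgenus}) equals the genus of its surface and is in particular $\ge 0$; equivalently $\chi_{(\whiteperm^t, \blackperm)} \le 2$. Since $X = \sphere$ we have $\chi_{(\whiteperm, \blackperm)} = 2$, so $\chi_{(\whiteperm^t, \blackperm)} \le \chi_{(\whiteperm, \blackperm)}$. Because conjugation preserves cycle type, $\numcycles(\whiteperm^t) = \numcycles(\whiteperm)$, so subtracting the two synthetic Euler characteristics gives $\chi_{(\whiteperm^t, \blackperm)} - \chi_{(\whiteperm, \blackperm)} = \numcycles(\whiteperm^t\blackperm) - \numcycles(\whiteperm\blackperm)$, whence $\numcycles(\whiteperm^t\blackperm) \le \numcycles(\whiteperm\blackperm)$.

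For the reverse direction, suppose $\numcycles(\whiteperm^t\blackperm) \le \numcycles(\whiteperm\blackperm)$. If $(\whiteperm, \blackperm)$ is not Exceptional relative to $(a,b)$, the Transitivity Theorem \ref{THMtransitivity} already gives that $(\whiteperm^t, \blackperm)$ is transitive, so assume it is Exceptional; by hypothesis it is then Tame Exceptional. By Proposition \ref{PRPconjugationviaoperations} the synthetic genus of $(\whiteperm^t, \blackperm)$ equals that of $(\whiteperm^{\opsymbol \opsymbol}, \blackperm^{\opsymbol \opsymbol})$, which by the remark preceding the Transitivity Theorem equals $g - 1$, where $g$ is the synthetic genus of $(\whiteperm, \blackperm)$; since $X = \sphere$ this is $-1$, i.e. $\chi_{(\whiteperm^t, \blackperm)} = 4$. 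Running the Euler-characteristic subtraction of the previous paragraph in reverse gives $\numcycles(\whiteperm^t\blackperm) = \numcycles(\whiteperm\blackperm) + 2 > \numcycles(\whiteperm\blackperm)$, contradicting the hypothesis; hence the Tame Exceptional case cannot occur under that hypothesis, and the implication holds vacuously there. One may also note that in this case $(\whiteperm^t, \blackperm)$ is not transitive — its synthetic genus is negative — so both sides of the biconditional are in fact false.

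There is no real obstacle here: the whole argument is bookkeeping with the synthetic Euler characteristic, once the Transitivity Theorem, Proposition \ref{PRPconjugationviaoperations}, and the genus-drop statement for the Tame Exceptional case are available. The one point that deserves a moment's care is the exhaustiveness of the case division under the hypothesis ``not Wild Exceptional'' — namely, not Exceptional versus Tame Exceptional \#1 versus Tame Exceptional \#2 — together with the fact that the synthetic genus drops by exactly $1$ uniformly for both flavors of Tame Exceptional.
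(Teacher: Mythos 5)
Your proposal is correct and follows essentially the same route as the paper: the forward direction is the same Euler-characteristic bookkeeping (transitivity gives a genuine dessin with genus $\geq 0$, hence no increase in $\numcycles(\whiteperm^t\blackperm)$), and your reverse direction — ruling out the Tame Exceptional case because it would force synthetic genus $-1$, contradicting the cycle-count hypothesis, then citing the Transitivity Theorem in the non-Exceptional case — is just the contrapositive packaging of the paper's argument, which uses the same genus-drop fact for Tame Exceptional pairs together with the "not Wild Exceptional" hypothesis before invoking Theorem \ref{THMtransitivity}.
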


\begin{proof}
If $ ( \whiteperm^t, \blackperm ) $ is transitive and $ \numcycles ( \whiteperm^t \blackperm ) > \numcycles ( \whiteperm \blackperm ) $ then the dessin d'enfant corresponding to $ ( \whiteperm^t, \blackperm ) $ has genus strictly lower than that of $ ( \whiteperm, \blackperm ) $, which is absurd. Conversely, let $ g^t $ be the synthetic genus of $ ( \whiteperm^t, \blackperm ) $. By the inner hypothesis, $ g^t \geq 0 $. This implies that $ ( \whiteperm, \blackperm ) $ is not Tame Exceptional relative to $ ( a, b ) $, since Tame Exceptional would imply $ g^t < 0 $. Combined with the outer hypothesis, $ ( \whiteperm, \blackperm ) $ is not Exceptional relative to $ ( a, b ) $. By the Transitivity Theorem \ref{THMtransitivity}, $ ( \whiteperm^t, \blackperm ) $ is transitive.
\end{proof}

Half of Corollary \ref{CORspherecasetheorem} is true generally:

\begin{cor} \label{CORhighergenusimpliestransitive}
If $ \numcycles ( \whiteperm^t \blackperm ) < \numcycles ( \whiteperm \blackperm ) $ then $ ( \whiteperm^t, \blackperm ) $ is transitive.
\end{cor}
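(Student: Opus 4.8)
The plan is to reduce this to the Transitivity Theorem \ref{THMtransitivity}: I claim the hypothesis $\numcycles(\whiteperm^t\blackperm) < \numcycles(\whiteperm\blackperm)$ forces $(\whiteperm,\blackperm)$ to be non-Exceptional relative to $(a,b)$, after which the Transitivity Theorem immediately yields that $(\whiteperm^t,\blackperm)$ is transitive.

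First I would record the elementary identity linking the change in the cycle count to the change in synthetic genus. Since $t$ is a transposition, $\whiteperm^t$ has the same cycle type as $\whiteperm$, so $\numcycles(\whiteperm^t) = \numcycles(\whiteperm)$; and $\numcycles(\blackperm)$ and $\vert\edges\vert$ are unchanged, so from Definition \ref{DFNsyntheticgenus},
\[
\numcycles(\whiteperm^t\blackperm) - \numcycles(\whiteperm\blackperm) \;=\; \chi_{(\whiteperm^t,\blackperm)} - \chi_{(\whiteperm,\blackperm)} \;=\; -2\bigl( g_{(\whiteperm^t,\blackperm)} - g_{(\whiteperm,\blackperm)} \bigr).
\]
Thus the hypothesis says exactly that the synthetic genus strictly increases, and it suffices to show that for an Exceptional pair the synthetic genus does \emph{not} strictly increase.

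Next I would compute $g_{(\whiteperm^t,\blackperm)}$ for each Exceptional type. By Proposition \ref{PRPconjugationviaoperations} this equals the synthetic genus of the double reroute $(\whiteperm^{\opsymbol\opsymbol},\blackperm^{\opsymbol\opsymbol})$, and the latter is controlled by two applications of the Reroute Theorem \ref{THMgenus} whose types come from the branching analysis of \S\ref{Siteration}. A Tame Exceptional pair occurs as Type N(\ref{tameexceptionalN}), Type P1(\ref{tameexceptionalP1}), or Type P3(\ref{tameexceptionalP3}); the corresponding double reroute is Type N then Type P2, Type P1 then Type N, or Type P3 then Type N, and in all three $g^{\opsymbol\opsymbol} = g - 1$, whence $\numcycles(\whiteperm^t\blackperm) = \numcycles(\whiteperm\blackperm) + 2$. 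A Wild Exceptional pair occurs as Type P2(\ref{wildexceptional}), whose double reroute is Type P2 then Type P2, so $g^{\opsymbol\opsymbol} = g$ and $\numcycles(\whiteperm^t\blackperm) = \numcycles(\whiteperm\blackperm)$. (These are precisely the genus relations already recorded in \S\ref{SSnonexceptional}.) In every Exceptional case, therefore, $\numcycles(\whiteperm^t\blackperm) \geq \numcycles(\whiteperm\blackperm)$, contradicting the hypothesis; hence $(\whiteperm,\blackperm)$ is not Exceptional relative to $(a,b)$ and the Transitivity Theorem \ref{THMtransitivity} applies. I anticipate no real obstacle: the only point requiring care is the case-by-case identification of the double-reroute types, which is already carried out in \S\ref{Siteration}, so the corollary collapses to the displayed identity plus an invocation of the Transitivity Theorem.
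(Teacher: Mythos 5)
Your proposal is correct and follows essentially the same route as the paper: the paper's proof also converts the hypothesis into a strict increase of synthetic genus, notes (via the remark in \S\ref{SSnonexceptional} that Tame Exceptional gives $g^{\opsymbol\opsymbol}=g-1$ and Wild Exceptional preserves genus) that no Exceptional case can raise the genus, and then invokes the Transitivity Theorem \ref{THMtransitivity}. You merely make explicit the cycle-count identity and the case-by-case double-reroute computation that the paper leaves implicit, which is fine.
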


\begin{proof}
By hypothesis, the synthetic genus of $ ( \whiteperm^t, \blackperm ) $ is strictly larger than that of $ ( \whiteperm, \blackperm ) $. This implies that $ ( \whiteperm, \blackperm ) $ is not Exceptional relative to $ ( a, b ) $ so, by the Transitivity Theorem \ref{THMtransitivity}, $ ( \whiteperm^t, \blackperm ) $ is transitive.
\end{proof}

\subsection{The Wild Exceptional case} \label{SSwildexceptional}

The following is fairly weak, but still necessary:

\begin{prp} \label{PRPgeneralconnvswalks}
\emph{Here, $ ( \whiteperm, \blackperm ) $ is not necessarily Wild Exceptional relative to $ ( a, b ) $.} Let $ t \in S_{\edges} $ be the transposition exchanging $ a $ and $ b $. Let $ ( D^t, X^t ) $ be a model for $ ( \whiteperm^t, \blackperm ) $. \Assertion $ D^t $ is connected if and only if at least one of the following exists:
\begin{enumerate}
\item a walk in the subgraph $ D \setminus ( a \cup b ) $ from $ \white_a $ to $ \black_a $

\begin{flushleft}
or
\end{flushleft}

\item a walk in the subgraph $ D \setminus ( a \cup b ) $ from $ \white_b $ to $ \black_b $

\begin{flushleft}
or
\end{flushleft}

\item a walk in the subgraph $ D \setminus ( a \cup b ) $ from $ \white_a $ to $ \white_b $

\begin{flushleft}
or
\end{flushleft}

\item a walk in the subgraph $ D \setminus ( a \cup b ) $ from $ \black_a $ to $ \black_b $
\end{enumerate}
\end{prp}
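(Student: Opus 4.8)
The plan is to first identify the underlying graph of $ D^t $ explicitly, and then to reduce the asserted equivalence to a short, symmetry-reduced check about partitions of a four-element set.

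First I would record a preliminary observation. Since $ \blackperm $ is unchanged by conjugation and $ \whiteperm^t = t \whiteperm t^{-1} $, the transposition $ t $ carries $ \whiteperm $-orbits to $ \whiteperm^t $-orbits via $ O \mapsto t ( O ) $, which gives a canonical identification of the white vertices of $ D^t $ with those of $ D $; under this identification, together with the identity on black vertices, the underlying graph of $ D^t $ is precisely $ D \setminus ( a \cup b ) $ with edge $ a $ re-attached so as to join $ \black_a $ to $ \white_b $, and edge $ b $ re-attached so as to join $ \black_b $ to $ \white_a $. (Equivalently, this is Lemma \ref{LEMmodelsforoperation} applied along the two reroute operations of Proposition \ref{PRPconjugationviaoperations}, after discarding the two pendant edges $ a_{\white}, b_{\white} $, which cannot affect connectivity.) Write $ H \defeq D \setminus ( a \cup b ) $.

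Next I would set up two reductions. Every connected component of $ H $ contains at least one of $ \white_a, \black_a, \white_b, \black_b $: a component meeting none of these four vertices would be disjoint from both edges $ a $ and $ b $, hence a connected component of $ D $, contradicting connectedness of $ D $; in particular $ H $ has at most three components. Also, because $ D $ is bicolored it has no loops, so $ \white_a \neq \black_a $, $ \white_b \neq \black_b $, $ \white_a \neq \black_b $, $ \white_b \neq \black_a $, and the only coincidences possible among the four distinguished vertices are $ \white_a = \white_b $ or $ \black_a = \black_b $; in either of those cases the edge incidences of $ D^t $ agree with those of $ D $, so $ D^t $ is connected while conclusion $ ( 3 ) $ or $ ( 4 ) $ holds trivially via the empty walk. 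Hence I may assume the four vertices are distinct, and then the partition $ \approx $ of the set $ \{ \white_a, \black_a, \white_b, \black_b \} $ induced by ``lying in the same component of $ H $'' faithfully records $ \pi_0 ( H ) $ (each block is the set of distinguished vertices lying in one component) and has at most three blocks. Now, adjoining an edge between two vertices merges exactly their two components, so all three relevant facts become statements about $ \approx $: the graph $ D $, being $ H $ together with an edge joining $ \white_a $ and $ \black_a $ and an edge joining $ \white_b $ and $ \black_b $, is connected if and only if refining $ \approx $ by the identifications $ \white_a \sim \black_a $ and $ \white_b \sim \black_b $ gives the one-block partition; likewise $ D^t $ is connected if and only if refining $ \approx $ by $ \black_a \sim \white_b $ and $ \black_b \sim \white_a $ gives the one-block partition; and the disjunction of $ ( 1 ) $--$ ( 4 ) $ holds if and only if $ \approx $ already identifies $ \white_a $ with $ \black_a $, or $ \white_b $ with $ \black_b $, or $ \white_a $ with $ \white_b $, or $ \black_a $ with $ \black_b $.

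It then remains to run over the partitions $ \approx $ of a four-element set satisfying the ``$ D $ connected'' constraint (eleven of the fifteen partitions) and verify in each case that $ D^t $ is connected exactly when one of the four identifications is already present in $ \approx $. The whole computation is invariant under the Klein four-group generated by the swap $ a \leftrightarrow b $ and the swap of the two colors, which preserves the ``$ D $ connected'' constraint, the ``$ D^t $ connected'' condition, and the set of the four conclusions, so only six orbit representatives need be treated, for instance $ \{ \white_a, \black_a, \white_b, \black_b \} $, $ \{ \white_a, \white_b \mid \black_a, \black_b \} $, $ \{ \white_a, \black_b \mid \black_a, \white_b \} $, $ \{ \white_a, \black_a, \white_b \mid \black_b \} $, $ \{ \white_a, \white_b \mid \black_a \mid \black_b \} $, $ \{ \white_a, \black_b \mid \black_a \mid \white_b \} $. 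The only real obstacle is arranging this final verification so that it is a transparent six-line case check rather than a tedious enumeration; the symmetry reduction is what makes it painless, and the hypothesis that $ D $ is connected is precisely what eliminates the partitions (such as $ \{ \white_a, \black_a \mid \white_b, \black_b \} $) that would otherwise be awkward. Everything else — the description of the graph of $ D^t $, the component-counting, and the ``adjoining an edge merges two components'' bookkeeping — is routine.
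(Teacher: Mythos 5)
Correct, and essentially the paper's own proof: you use the same description of $ D^t $ as $ D \setminus ( a \cup b ) $ with edge $ a $ rejoined between $ \black_a $ and $ \white_b $ and edge $ b $ between $ \black_b $ and $ \white_a $, the same preliminary observations that $ D \setminus ( a \cup b ) $ has at most three components, each containing at least one of the four distinguished vertices, and then a finite case check, which the paper organizes by the number of components while you organize it by partitions of $ \{ \white_a, \black_a, \white_b, \black_b \} $ with a Klein-four symmetry reduction (and an explicit treatment of the coincidences $ \white_a = \white_b $, $ \black_a = \black_b $ that the paper leaves implicit). The only quibble is terminological: adjoining the identifications merges blocks of $ \approx $, so it is a coarsening, not a ``refining''.
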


The relation of this to the Wild Exceptional situation is given after the proof.

\begin{proof}
First, note that the number of connected components of $ D \setminus ( a \cup b ) $ is at most three. Second, each connected component must contain at least one of the (not necessarily distinct) vertices $ \white_a, \black_a, \white_b, \black_b $. Third, the underlying graph $ D^t $ is constructed from $ D \setminus ( a \cup b ) $ by attaching two new edges, one between $ \black_a $ and $ \white_b $ and another between $ \black_b $ and $ \white_a $ (cf. Lemma \ref{LEMmodelsforoperation} and the proof of Proposition \ref{PRPconjugationviaoperations}). Both halves of the equivalence are obvious if $ D \setminus ( a \cup b ) $ is connected (use the third remark from the beginning of this paragraph). So, I can assume that $ D \setminus ( a \cup b ) $ is disconnected.

Suppose that $ D \setminus ( a \cup b ) $ has three connected components. In this case, the second remark can be sharpened: one component $ C $ must contain two of $ \white_a, \black_a, \white_b, \black_b $ and the other two components each contain one. Further, among the six possible pairs, $ C $ can only contain one of these four pairs: $ \white_a, \black_b $ or $ \white_a, \white_b $ or $ \black_a, \black_b $ or $ \black_a, \white_b $. It is clear that the cases in which the attachment of the two new edges unifies the three components into one are these: $ \white_a, \white_b \in C $ or $ \black_a, \black_b \in C $. It is also clear that these are precisely the cases among the four in which there is a walk from the list.

Suppose instead that $ D \setminus ( a \cup b ) $ has two connected components. If one of the two components contains only one of the vertices $ \white_a, \black_a, \white_b, \black_b $ then both halves of the equivalence are obvious (use the third remarks from the first paragraph). So, I can assume that each connected component contains two of the vertices $ \white_a, \black_a, \white_b, \black_b $. By the third remark in the first paragraph, it is clear that the attachment of the two new edges unifies the two components into one precisley when neither component contains both $ \white_a, \black_b $ (equivalently, neither component contains both $ \black_b, \white_a $). Conversely, this is precisely the situation in which there is a walk from the list.
\end{proof}

\begin{rmk}
If $ X = \sphere $ then $ \vert \pi_0 ( D \setminus ( a \cup b ) ) \vert = 3 $ by Corollary \ref{CORedgedeletion}.
\end{rmk}

Unfortunately, it seems that no better statement is possible for the Wild Exceptional case. What prohibits the possibility of a better statement, and is the reason for the word ``wild'', is the fact that no good relationship need exist between the edges $ a $ and $ b $: $ a $ and $ b $ need not border a common face and may be very far apart within the graph.

In short, understanding of the Wild Exceptional class as a whole requires understanding possibly very long walks from $ a $ to $ b $ that depend on the \emph{global} structure of the graph. To the best of my knowledge, not much can be said about this.

By contrast, Tame Exceptional requires that $ a, b $ represent the same $ \whiteperm \blackperm $-orbit; in particular, $ a $ and $ b $ border a common face and an explicit walk from $ a $ to $ b $ is easy to construct.

\subsection{The Tame Exceptional case} \label{SStameexceptional}

First, I provide simple examples to show the possibilities that can occur.

\begin{exm}[Tame Exceptional \#1, spherical / non-transitive] \label{EXMtypeP12nontransitive}
Define $ \whiteperm, \blackperm \in S_3 $ by specifying disjoint cycle decompositions: $ \whiteperm \defeq ( 1, 2 ) \cdot ( 3 ) $ and $ \blackperm \defeq ( 1 ) \cdot ( 2, 3 ) $. This pair $ ( \whiteperm, \blackperm ) $ is obviously transitive, $ \whiteperm \blackperm = ( 1, 2, 3 ) $, and the surface of the corresponding dessin d'enfant is $ \sphere $. \emph{This pair describes the linear dessin d'enfant with three edges.} It is evident that $ ( \whiteperm, \blackperm ) $ is Type P1(\ref{tameexceptionalP1}) relative to $ ( 1, 3 ) $, and therefore Tame Exceptional \#1. For $ t = ( 1, 3 ) $, it is clear that $ \whiteperm^t = \blackperm $ and so $ ( \whiteperm^t, \blackperm ) $ is not transitive. \emph{Of course, transitivity was expected to fail by considering synthetic genus.}
\end{exm}

\begin{exm}[Tame Exceptional \#1, toral / transitive] \label{EXMtypeP12transitive}
Define $ \whiteperm, \blackperm \in S_4 $ by specifying disjoint cycle decompositions: $ \whiteperm \defeq ( 1, 2, 3 ) \cdot ( 4 ) $ and $ \blackperm \defeq ( 1, 2, 4, 3 ) $. This pair $ ( \whiteperm, \blackperm ) $ is transitive. It is easy to compute that $ \whiteperm \blackperm = ( 1, 3, 2, 4 ) $. In particular, the formula for Euler Characteristic shows that the surface of this dessin d'enfant is $ \torus $. It is evident that $ ( \whiteperm, \blackperm ) $ is Type P1(\ref{tameexceptionalP1}) relative to $ ( 1, 4 ) $, and therefore Tame Exceptional \#1. The following depicts the dessin d'enfant corresponding to $ ( \whiteperm, \blackperm ) $ on the left and, for $ t = ( 1, 4 ) $, a model for $ ( \whiteperm^t, \blackperm ) $ on the right:
\begin{center}
\includegraphics[scale=0.4]{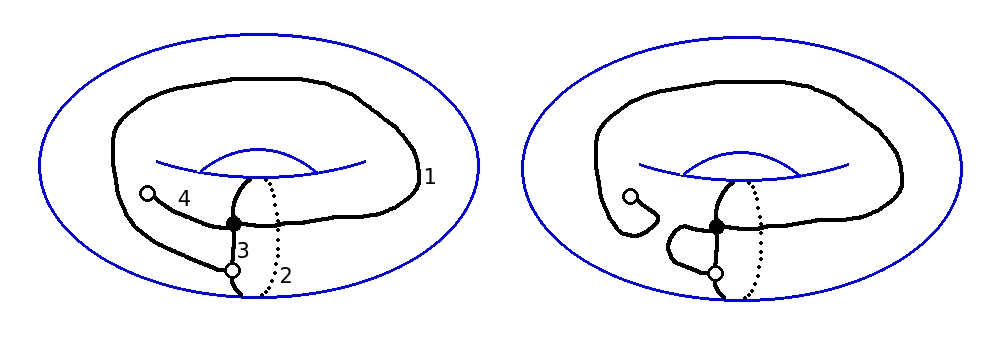}
\end{center}
Connectedness of the model implies that $ ( \whiteperm^t, \blackperm ) $ is transitive. Note that the ``true'' surface of the dessin d'enfant corresponding to $ ( \whiteperm^t, \blackperm ) $ is $ \sphere $.
\end{exm}

\begin{exm}[Tame Exceptional \#1, toral / non-transitive]
This example is created by gluing Example \ref{EXMtypeP12nontransitive} onto the trivial dessin d'enfant. Define $ \whiteperm, \blackperm \in S_6 $ by specifying disjoint cycle decompositions: $ \whiteperm \defeq ( 1, 2 ) \cdot ( 3 ) \cdot ( 4, 5, 6 ) $ and $ \blackperm \defeq ( 1, 4, 5, 6 ) \cdot ( 2, 3 ) $. The pair $ ( \whiteperm, \blackperm ) $ is clearly transitive, $ \whiteperm \blackperm = ( 1, 5, 4, 6, 2, 3 ) $, and the formula for Euler Characteristic shows that the surface of its dessin d'enfant is $ \torus $. The pair $ ( \whiteperm, \blackperm ) $ is Type P1(\ref{tameexceptionalP1}) relative to $ ( 1, 3 ) $ and therefore Tame Exceptional \#1. For $ t = ( 1, 3 ) $, $ \whiteperm^t = ( 3, 2 ) \cdot ( 1 ) \cdot ( 4, 5, 6 ) $ and so $ ( \whiteperm^t, \blackperm ) $ is clearly not transitive: both stabilize $ \{ 2, 3 \} $. The following depicts the dessin d'enfant corresponding to $ ( \whiteperm, \blackperm ) $ on the left and a (disconnected) model for $ ( \whiteperm^t, \blackperm ) $ on the right:
\begin{center}
\includegraphics[scale=0.4]{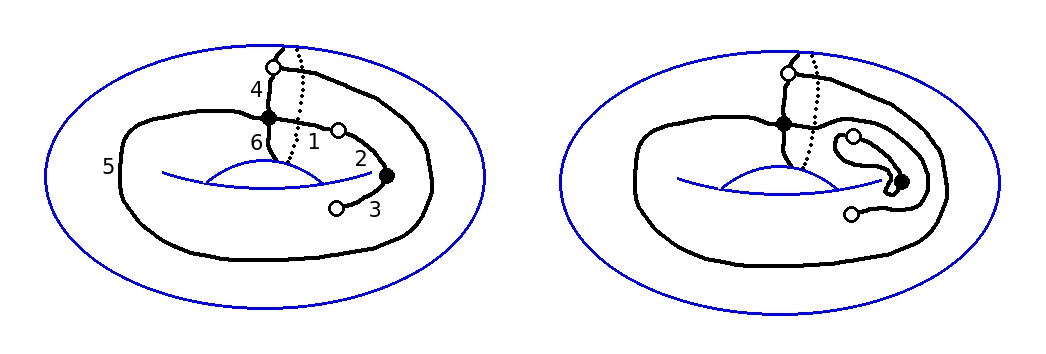}
\end{center}
\end{exm}

\begin{exm}[Tame Exceptional \#2, spherical / non-transitive] \label{EXMtypeP33spherenontransitive}
Define $ \whiteperm, \blackperm \in S_4 $ by specifying disjoint cycle decompositions: $ \whiteperm \defeq ( 1, 2 ) \cdot ( 3, 4 ) $ and $ \blackperm \defeq ( 1, 4 ) \cdot ( 2, 3 ) $. The pair $ ( \whiteperm, \blackperm ) $ is clearly transitive. It is easy to compute that $ \whiteperm \blackperm = ( 1, 3 ) \cdot ( 2, 4 ) $. In particular, the formula for Euler Characteristic shows that the surface of its dessin d'enfant is $ \sphere $. \emph{This is Example \ref{EXMcircuittypeP3}.} It is evident that $ ( \whiteperm, \blackperm ) $ is Type P3(\ref{tameexceptionalP3}) relative to $ ( 1, 3 ) $, and therefore Tame Exceptional \#2. But for $ t = ( 1, 3 ) $, it is clear that $ \whiteperm^t = \blackperm $ and $ \blackperm $ is not a maximal cycle so $ ( \whiteperm^t, \blackperm ) $ is not transitive. \emph{Of course, transitivity was expected to fail by considering synthetic genus.}
\end{exm}

\begin{exm}[Tame Exceptional \#2, toral / transitive] \label{EXMtypeP33transitive}
Define $ \whiteperm, \blackperm \in S_8 $ by specifying disjoint cycle decompositions: $ \whiteperm \defeq ( 1, 2, 3 ) \cdot ( 4, 5, 6 ) \cdot ( 7, 8 ) $ and $ \blackperm \defeq ( 1, 7, 5 ) \cdot ( 2, 6, 4 ) \cdot ( 3, 8 ) $. This pair $ ( \whiteperm, \blackperm ) $ is transitive. It is easy to compute that $ \whiteperm \blackperm = ( 1, 8 ) \cdot ( 2, 4, 3, 7, 6, 5 ) $. In particular, the formula for Euler Characteristic shows that the surface of this dessin d'enfant is $ \torus $. It is evident that $ ( \whiteperm, \blackperm ) $ is Type P3(\ref{tameexceptionalP3}) relative to $ ( 1, 8 ) $, and therefore Tame Exceptional \#2. The following depicts the dessin d'enfant corresponding to $ ( \whiteperm, \blackperm ) $ on the left and, for $ t = ( 1, 8 ) $, a model for $ ( \whiteperm^t, \blackperm ) $ on the right:
\begin{center}
\includegraphics[scale=0.4]{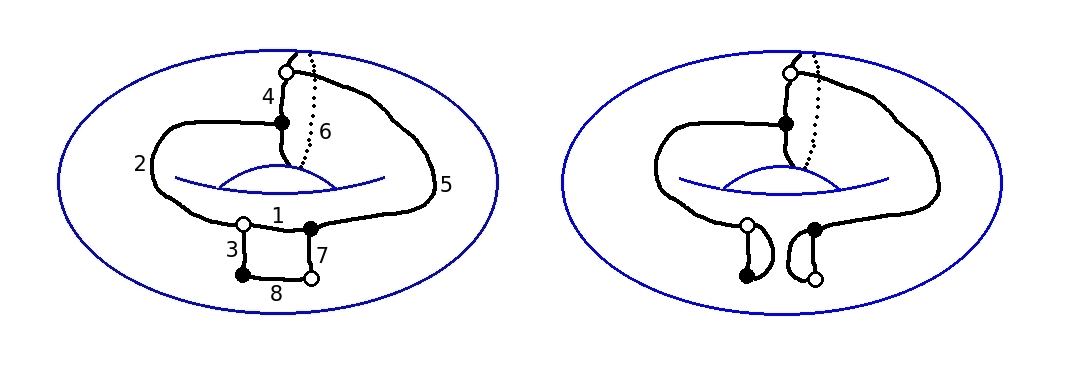}
\end{center}
Connectedness of the model implies that $ ( \whiteperm^t, \blackperm ) $ is transitive. Note that the ``true'' surface of the dessin d'enfant corresponding to $ ( \whiteperm^t, \blackperm ) $ is $ \sphere $.
\end{exm}

\begin{exm}[Tame Exceptional \#2, toral / non-transitive] \label{EXMtypeP33torusnontransitive}
Let $ ( \whiteperm, \blackperm ) $ be as in the previous Example \ref{EXMtypeP33transitive}, but choose instead the transposition $ t = ( 3, 7 ) $. It is again true that $ ( \whiteperm, \blackperm ) $ is Type P3(\ref{tameexceptionalP3}) relative to $ ( 3, 7 ) $, but $ ( \whiteperm^t, \blackperm ) $ is not transitive.
\end{exm}

The following shows that to decide transitivity in the Tame Exceptional situation is also equivalent to a seemingly subtle question about deletion vs. connectedness in graphs which, nonetheless, is simpler than that in Proposition \ref{PRPgeneralconnvswalks}:

\begin{prp} \label{PRPtameexctransiffwalks}
Assume that $ ( \whiteperm, \blackperm ) $ is Tame Exceptional relative to $ ( a, b ) $. Let $ t \in S_{\edges} $ be the transposition exchanging $ a $ and $ b $. Let $ ( D^t, X^t ) $ be a model for $ ( \whiteperm^t, \blackperm ) $. \Assertion $ D^t $ is connected if and only if at least one of the following exists:
\begin{enumerate}
\item a walk in the subgraph $ D \setminus ( a \cup b ) $ from $ \white_a $ to $ \black_a $

\begin{flushleft}
or
\end{flushleft}

\item a walk in the subgraph $ D \setminus ( a \cup b ) $ from $ \white_b $ to $ \black_b $
\end{enumerate}
\end{prp}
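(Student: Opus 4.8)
The plan is to deduce this from the more general Proposition \ref{PRPgeneralconnvswalks}, by showing that under the Tame Exceptional hypothesis the two ``mixed'' walks that appear there — a walk from $\white_a$ to $\white_b$ and a walk from $\black_a$ to $\black_b$ in $D \setminus ( a \cup b )$ — are redundant. The direction ``$\Leftarrow$'' is immediate: walks (1) and (2) in the present statement are among the four walks of Proposition \ref{PRPgeneralconnvswalks}, so their existence already forces $D^t$ connected. For ``$\Rightarrow$'', Proposition \ref{PRPgeneralconnvswalks} guarantees at least one of its four walks; if it is walk (1) or (2) we are done, so the real content is to show that the presence of a walk $\white_a \to \white_b$ or a walk $\black_a \to \black_b$ in $D \setminus ( a \cup b )$ already implies the presence of walk (1) or (2).

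The key device will be what I call a \emph{bridge walk}: I claim that whenever $( \whiteperm, \blackperm )$ is Tame Exceptional relative to $( a, b )$ there is, inside $D \setminus ( a \cup b )$, either a walk from $\black_a$ to $\white_b$ or a walk from $\black_b$ to $\white_a$. Granting this, the proposition follows by elementary graph theory: if the bridge joins $\black_a$ to $\white_b$ and there is also a walk $\white_a \to \white_b$, then $\white_a, \black_a, \white_b$ all lie in one connected component of $D \setminus ( a \cup b )$, which supplies walk (1); if instead there is a walk $\black_a \to \black_b$, then $\black_a, \black_b, \white_b$ lie in one component, supplying walk (2); the case of a bridge $\black_b \to \white_a$ is the mirror image, swapping which of (1), (2) is produced. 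In every case a walk $\white_a \to \white_b$ or $\black_a \to \black_b$ collapses into a walk (1) or (2), as desired.

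To construct the bridge walk I will use the minimal $\whiteperm \blackperm$-sequence between $a$ and $b$, which exists because Tame Exceptional forces $a$ and $b$ into a common $\whiteperm \blackperm$-orbit, and then invoke the walk-from-sequence recipe of \S\ref{Sarcs}: from the minimal $\whiteperm \blackperm$-sequence $a = x_0, x_1, \ldots, x_n = b$ (with $n \geq 1$) one obtains a walk in $D$ from $\black_a$ to $\white_b$ whose edge set is $\{ \blackperm ( x_0 ), x_1, \blackperm ( x_1 ), \ldots, x_{n-1}, \blackperm ( x_{n-1} ) \}$. Minimality already gives $x_i \neq a, b$ for $0 < i < n$, so the only thing to check is that no $\blackperm ( x_i )$ equals $a$ or $b$ for $0 \leq i \leq n-1$; since $\blackperm ( x_i ) = a$ forces $\whiteperm \blackperm ( x_i ) = \whiteperm ( a )$, i.e.\ $x_{i+1} = \whiteperm ( a )$, and $\blackperm ( x_i ) = b$ forces $x_{i+1} = \whiteperm ( b )$, this reduces to the single claim that the minimal $\whiteperm \blackperm$-sequence from $a$ to $b$ contains neither $\whiteperm ( a )$ nor $\whiteperm ( b )$. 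I will verify this against the concrete descriptions recalled in \S\ref{SSnonexceptional}: for Tame Exceptional \#1A the $\whiteperm \blackperm$-cycle has the shape $( \ldots \whiteperm ( b ) \ldots \whiteperm ( a ) \ldots a \ldots b \ldots )$, so the short arc from $a$ to $b$ meets neither of $\whiteperm ( a ), \whiteperm ( b )$; for Tame Exceptional \#2 the edges $\whiteperm ( a ), \whiteperm ( b )$ lie in a \emph{different} $\whiteperm \blackperm$-cycle from $a, b$ entirely; and Tame Exceptional \#1B is Tame Exceptional \#1A with $a$ and $b$ exchanged, which is exactly why the bridge there runs from $\black_b$ to $\white_a$ instead. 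The degenerate possibilities $\whiteperm ( a ) = a$, $\whiteperm ( b ) = b$, $\blackperm ( a ) = a$, $n = 1$, and so on, are dispatched by the same forcing argument (each of them again contradicts the cyclic-position information) and are routine.

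I expect the main obstacle to be bookkeeping rather than anything conceptual: one must be careful to apply the \S\ref{Sarcs} recipe with the correct pair of endpoints and to confirm that the auxiliary edges $\blackperm ( x_i )$ it introduces do not secretly reinsert $a$ or $b$ into the walk — and this is precisely where the hypothesis on the cyclic location of $\whiteperm ( a ), \whiteperm ( b )$ relative to $a, b$ gets used. Once the bridge walk is established, the remainder is just the three-component bookkeeping already carried out in the proof of Proposition \ref{PRPgeneralconnvswalks}, together with the observation (from Lemma \ref{LEMmodelsforoperation} and the proof of Proposition \ref{PRPconjugationviaoperations}) that $D^t$ is built from $D \setminus ( a \cup b )$ by attaching one edge between $\black_a$ and $\white_b$ and one between $\black_b$ and $\white_a$.
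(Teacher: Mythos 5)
Your proposal is correct and is essentially the paper's own argument: both construct the same ``bridge'' walk from $\black_a$ to $\white_b$ (or its mirror) out of the minimal $\whiteperm\blackperm$-sequence between $a$ and $b$ via the recipe of \S\ref{Sarcs}, using the Tame Exceptional cycle shapes to keep $\whiteperm(a),\whiteperm(b)$ out of the sequence so the walk lives in $D\setminus(a\cup b)$, and then reduce the four walks of Proposition \ref{PRPgeneralconnvswalks} to the two listed here by concatenation. The only difference is presentational: the paper invokes the $a\leftrightarrow b$ symmetry once at the start instead of treating the subcases \#1A, \#1B, \#2 separately.
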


At the end of the subsection, a short discussion of the ``walks'' condition is provided.

\begin{proof}
Recall from \S\ref{SSnonexceptional} the disjoint cycle decompositions that are possible for Tame Exceptional. Since each half of the equivalence is symmetric in $ a $ and $ b $, I can exchange $ a $ and $ b $ if necessary and assume that the $ \whiteperm \blackperm $-arc from $ a $ to $ b $ contains neither $ \whiteperm ( a ) $ nor $ \whiteperm ( b ) $. If $ x_0, x_1, \ldots, x_n \in \edges $ is the minimal $ \whiteperm \blackperm $-sequence from $ a $ to $ b $ then $ x_i \neq a, b $ for all $ 0 < i < n $ and $ x_i \neq \whiteperm ( a ), \whiteperm ( b ) $ for all $ 0 < i \leq n $. This implies that the sequence $ \blackperm ( x_0 ), x_1, \ldots, x_{n-1}, \blackperm ( x_{n-1} ) $ does not contain $ a, b $. As explained in \S\ref{Sarcs}, the sequence $ \blackperm ( x_0 ), x_1, \ldots, x_{n-1}, \blackperm ( x_{n-1} ) $ defines a walk $ W $, necessarily in $ D \setminus ( a \cup b ) $, from $ \black_a $ to $ \white_b $. The equivalence is now immediate from Proposition \ref{PRPgeneralconnvswalks}: by concatenating with the walk $ W $ if necessary, the list of four walks in Proposition \ref{PRPgeneralconnvswalks} reduces to the list of two walks here.
\end{proof}

Observe that if $ D \setminus ( a \cup b ) $ is connected then the statement regarding walks in Proposition \ref{PRPtameexctransiffwalks} is true, but not conversely. Similarly, if the statement regarding walks in Proposition \ref{PRPtameexctransiffwalks} is true then at least one of $ D \setminus a $ or $ D \setminus b $ is connected, but not conversely -- see Example \ref{EXMtypeP33torusnontransitive}.

\begin{rmk}
It is intuitive and tempting to think that the equivalence in Proposition \ref{PRPtameexctransiffwalks} might generalize completely, but this is false. On the other hand, it is indeed true generally that if the statement about walks is true then $ D^t $ is connected.
\end{rmk}

The value of Proposition \ref{PRPtameexctransiffwalks} would be greatly increased if the following question could be answered:

\begin{deletionquestionII}
Is there a ``good'' characterization, in terms of the monodromy pair $ ( \whiteperm, \blackperm ) $, of those pairs $ a, b \in \edges $ such that there is a walk in $ D \setminus ( a \cup b ) $ from $ \white_a $ to $ \black_a $ or from $ \white_b $ to $ \black_b $?
\end{deletionquestionII}

Since $ D \setminus e $ is connected iff and only if there is a walk in $ D \setminus e $ from $ \white_e $ to $ \black_e $, this is some kind of ``second order'' analogue of Deletion Question 1 from \S\ref{Sdeletion}.

\section{Classification 2: by Genus} \label{Sclassificationbygenus}

\begin{center}
\emph{Throughout this section, $ ( G, X ) $ is a dessin d'famille with edges $ \edges $ and monodromy pair $ ( \whiteperm, \blackperm ) $, and $ g $ is the synthetic genus of $ ( \whiteperm, \blackperm ) $.}
\end{center}

Recall that if $ \perm, s \in S_{\edges} $ then $ \perm^s \defeq s \cdot \perm \cdot s^{-1} $. In this section, I give a complete and explicit answer to the following question:

\begin{center}
\emph{If $ t \in S_{\edges} $ is a transposition then what is the synthetic genus of $ ( \whiteperm^t, \blackperm ) $?}
\end{center}

This is mostly a matter of collating the facts from \S\ref{Siteration}. Fix $ a, b \in \edges $ and let $ t \in S_{\edges} $ be the transposition exchanging $ a $ and $ b $.

\subsection{Genus-Raising Transpositions}

\begin{prp} \label{PRPgenusraising}
Let $ g^t $ be the synthetic genus of $ ( \whiteperm^t, \blackperm ) $. \Assertions $ g^t > g $ if and only if $ t $ has the following property:
\begin{enumerate}
\item At least one of $ a, b $ represents a different $ \whiteperm \blackperm $-orbit than the other three of $ a, \whiteperm ( a ), b, \whiteperm ( b ) $

\begin{flushleft}
and 
\end{flushleft}

\item at least one of $ \whiteperm ( a ), \whiteperm ( b ) $ represents a different $ \whiteperm \blackperm $-orbit than the other three of $ a, \whiteperm ( a ), b, \whiteperm ( b ) $.
\end{enumerate}
In this case, $ g^t = g + 1 $. If $ ( \whiteperm, \blackperm ) $ is transitive then $ ( \whiteperm^t, \blackperm ) $ is also transitive. \emph{It is not assumed that $ a, \whiteperm ( a ), b $ are distinct.}
\end{prp}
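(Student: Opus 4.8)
The plan is to reduce everything to the double-reroute machinery of \S\ref{Siteration}. By Proposition \ref{PRPconjugationviaoperations} the synthetic genus $ g^t $ of $ ( \whiteperm^t, \blackperm ) $ equals the synthetic genus $ g^{\opsymbol \opsymbol} $ of $ ( \whiteperm^{\opsymbol \opsymbol}, \blackperm^{\opsymbol \opsymbol} ) $, where the latter is obtained from $ ( \whiteperm, \blackperm ) $ by the reroute relative to $ ( a, b ) $ followed by the reroute relative to $ ( b, a_{\white} ) $. Writing $ \delta_1 := g^{\opsymbol} - g $ and $ \delta_2 := g^{\opsymbol \opsymbol} - g^{\opsymbol} $, the Reroute Theorem \ref{THMgenus} gives $ \delta_i \in \{ -1, 0, +1 \} $, with $ \delta_1 $ determined by the Type of $ ( \whiteperm, \blackperm ) $ relative to $ ( a, b ) $ and $ \delta_2 $ by the Type of $ ( \whiteperm^{\opsymbol}, \blackperm^{\opsymbol} ) $ relative to $ ( b, a_{\white} ) $. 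So $ g^t - g = \delta_1 + \delta_2 $. The first task is to observe, from the branching tables of \S\ref{Siteration}, that after a Type U reroute one lands in Type P1/P1/P3 or Type N (never Type U) and after a Type N reroute one lands in Type U/P2/P4/U (never Type N); hence one never has $ ( \delta_1, \delta_2 ) = ( +1, +1 ) $ or $ ( -1, -1 ) $, so $ g^t - g \in \{ -1, 0, +1 \} $ and $ g^t > g $ is equivalent to $ \delta_1 + \delta_2 = 1 $, i.e. to one of $ \delta_1, \delta_2 $ being $ +1 $ and the other $ 0 $.

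Next I would read off from \S\ref{Siteration} exactly when this happens. Type U Branching shows: when $ ( \whiteperm, \blackperm ) $ is Type U relative to $ ( a, b ) $ (so $ \delta_1 = +1 $), we get $ \delta_2 = 0 $ precisely when $ \whiteperm ( b ) $ does \emph{not} represent the $ \whiteperm \blackperm $-orbit of $ \whiteperm ( a ) $ (the only obstruction is case (\ref{TYPEU2}), which gives Type N). The Type P Branching tables show: the only way to reach a Type U pair (so that $ \delta_2 = +1 $) starting from a Type P reroute (where $ \delta_1 = 0 $) is case (\ref{TYPEP24}) under Type P2 or case (\ref{TYPEP44}) under Type P4, i.e. $ ( \whiteperm, \blackperm ) $ is Type P2 or Type P4 relative to $ ( a, b ) $ and $ \whiteperm ( b ) $ represents a $ \whiteperm \blackperm $-orbit distinct from those of $ a, \whiteperm ( a ), b $. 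So $ g^t > g $ holds exactly in these two families of configurations.

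The main step is then the bookkeeping that identifies these two families with the orbit conditions (1) and (2) of the statement. I would organize the argument by the way the multiset $ \{ a, \whiteperm ( a ), b, \whiteperm ( b ) \} $ is partitioned into $ \whiteperm \blackperm $-orbits, allowing the coincidences $ \whiteperm ( a ) = a $, $ \whiteperm ( b ) = b $, etc., and check in each partition both that the branching analysis predicts the correct behavior and that conditions (1), (2) hold — and conversely that at least one of them fails for every other partition. In the Type U family, condition (1) is automatic (since $ a, \whiteperm ( a ), b $ already lie in pairwise distinct orbits, one of $ a, b $ is isolated among the four) and condition (2) is exactly ``$ \whiteperm ( b ) $ not in the orbit of $ \whiteperm ( a ) $''; in the Type P2/P4 family with $ \whiteperm ( b ) $ in a fresh orbit, $ b $ (resp. $ a $) witnesses (1) and $ \whiteperm ( b ) $ witnesses (2). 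For the remaining branches — Type N, Type P1, and the subcases of Type P2, P3, P4 other than (\ref{TYPEP24})/(\ref{TYPEP44}) — I would check directly that (1) or (2) fails, using that in Type N and Type P1 all of $ a, \whiteperm ( a ), b $ share an orbit (so neither $ a $ nor $ b $ is isolated), and that in the excluded P2/P3/P4 subcases $ \whiteperm ( b ) $ lies in the orbit of one of $ a, \whiteperm ( a ), b $, so that no element of the relevant pair is isolated among the four. This enumeration is mechanical but is the only genuinely tedious part, and I expect it to be the main obstacle.

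Finally, when $ g^t > g $ the above shows $ \delta_1 + \delta_2 = 1 $, hence $ g^t = g + 1 $; since conjugation preserves cycle type this forces $ \numcycles ( \whiteperm^t \blackperm ) = \numcycles ( \whiteperm \blackperm ) - 2 < \numcycles ( \whiteperm \blackperm ) $, so if in addition $ ( \whiteperm, \blackperm ) $ is transitive then Corollary \ref{CORhighergenusimpliestransitive} yields transitivity of $ ( \whiteperm^t, \blackperm ) $. Everything outside the third paragraph is a direct appeal to Proposition \ref{PRPconjugationviaoperations}, the Reroute Theorem \ref{THMgenus}, the branching database, and Corollary \ref{CORhighergenusimpliestransitive}.
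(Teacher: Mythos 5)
Your proposal is correct and follows essentially the same route as the paper: both reduce to Proposition \ref{PRPconjugationviaoperations}, read the genus-raising configurations (Type U(\ref{TYPEU1}), U(\ref{TYPEU3}), U(\ref{TYPEU4}), P2(\ref{TYPEP24}), P4(\ref{TYPEP44})) off the branching database of \S\ref{Siteration} together with the Reroute Theorem \ref{THMgenus}, match them against conditions (1)--(2) by a case check on how $ a, \whiteperm ( a ), b, \whiteperm ( b ) $ are distributed into $ \whiteperm \blackperm $-orbits, and obtain the transitivity claim from Corollary \ref{CORhighergenusimpliestransitive}. One small slip in your exclusion step: in Type P3(\ref{TYPEP34}) the element $ \whiteperm ( b ) $ lies in a fresh orbit, so your stated reason (``$ \whiteperm ( b ) $ lies in the orbit of one of $ a, \whiteperm ( a ), b $'') does not apply there; however condition (1) still fails because $ a $ and $ b $ share a $ \whiteperm \blackperm $-orbit, so the conclusion is unaffected.
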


\begin{proof}
According to \S\ref{Siteration} and the Reroute Theorem \ref{THMgenus}, the situations in which $ g^t > g $ are precisely these five: Type U(\ref{TYPEU1}), Type U(\ref{TYPEU3}), Type U(\ref{TYPEU4}), Type P2(\ref{TYPEP24}), Type P4(\ref{TYPEP44}). The fact that $ g^t = g + 1 $ is then immediate from the Reroute Theorem \ref{THMgenus}. This then implies, in the case that $ ( \whiteperm, \blackperm ) $ is transitive, that $ ( \whiteperm^t, \blackperm ) $ is also transitive, by Corollary \ref{CORhighergenusimpliestransitive}. Explicitly, the five situations above are described by saying that $ a, \whiteperm ( a ), b, \whiteperm ( b ) $ are distributed into $ \whiteperm \blackperm $-cycles in one of the following ways:
\begin{itemize}
\setlength{\itemsep}{5pt}

\item[-] $ ( \ldots a \ldots \whiteperm ( b ) \ldots ), ( \ldots \whiteperm ( a ) \ldots ), ( \ldots b \ldots ) $

\item[-] $ ( \ldots a \ldots ), ( \ldots \whiteperm ( a ) \ldots ), ( \ldots b \ldots \whiteperm ( b ) \ldots ) $

\item[-] $ ( \ldots a \ldots ), ( \ldots \whiteperm ( a ) \ldots ), ( \ldots b \ldots ), ( \ldots \whiteperm ( b ) \ldots ) $

\item[-] $ ( \ldots a \ldots \whiteperm ( a ) \ldots ), ( \ldots b \ldots ), ( \ldots \whiteperm ( b ) \ldots ) $

\item[-] $ ( \ldots a \ldots ), ( \ldots \whiteperm ( a ) \ldots b \ldots ), ( \ldots \whiteperm ( b ) \ldots ) $
\end{itemize}

An easy ``placing balls into boxes'' argument shows that the given property extracts precisely these five situations from among all possible.
\end{proof}

Proposition \ref{PRPgenusraising} answers a question asked to me by Brian Hwang, which I record:

\begin{cor}
Let $ ( D, \sphere ) $ be a spherical dessin d'enfant with edges $ \edges $ and monodromy $ ( \whiteperm, \blackperm ) $. \Assertion The transpositions $ t \in S_{\edges} $ for which $ ( \whiteperm^t, \blackperm ) $ defines a dessin d'enfant on $ \torus $ are precisely those satisfying the property in Proposition \ref{PRPgenusraising}.
\end{cor}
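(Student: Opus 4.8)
The plan is to reduce the statement directly to Proposition \ref{PRPgenusraising}, using only the translation between ``defines a dessin d'enfant on $\torus$'' and ``transitive with synthetic genus $1$''. First I would record that, since $( D, \sphere )$ is a dessin d'enfant on the sphere, its synthetic genus is $g = 0$ (the remark after Definition \ref{DFNsyntheticgenus}: for a genuine dessin d'enfant the synthetic genus is the genus of the surface). Next I would note that for a transposition $t \in S_{\edges}$, the pair $( \whiteperm^t, \blackperm )$ is the monodromy pair of some dessin d'enfant precisely when it is transitive (cf. \S\ref{SSdessins}), and in that case the surface of that dessin d'enfant has genus equal to the synthetic genus $g^t$ of $( \whiteperm^t, \blackperm )$. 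Hence $( \whiteperm^t, \blackperm )$ defines a dessin d'enfant on $\torus$ if and only if it is transitive and $g^t = 1$.

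For the forward implication I would suppose $t$ satisfies the property in Proposition \ref{PRPgenusraising}. That Proposition then gives $g^t = g + 1 = 1$ and, since $( \whiteperm, \blackperm )$ is transitive (it is the monodromy pair of $D$), also gives that $( \whiteperm^t, \blackperm )$ is transitive; by the first paragraph $( \whiteperm^t, \blackperm )$ therefore defines a dessin d'enfant on $\torus$. For the reverse implication I would suppose $( \whiteperm^t, \blackperm )$ defines a dessin d'enfant on $\torus$; by the first paragraph this means $( \whiteperm^t, \blackperm )$ is transitive and $g^t = 1$, hence $g^t > g = 0$, and the ``only if'' direction of Proposition \ref{PRPgenusraising} yields that $t$ has the stated property.

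I do not expect a genuine obstacle here: the whole content has already been isolated in Proposition \ref{PRPgenusraising} (and, beneath it, in the Reroute Theorem \ref{THMgenus} and the branching catalogue of \S\ref{Siteration}). The only step needing a sentence of care is the equivalence ``defines a dessin d'enfant on $\torus$'' $\Longleftrightarrow$ ``transitive of synthetic genus $1$'', which rests on the bijection recalled in \S\ref{SSdessins} and on the fact that synthetic genus agrees with the genus of the surface for bona fide dessins d'enfants; once that is stated, the corollary is immediate.
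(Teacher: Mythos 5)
Your proposal is correct and matches the paper's intent: the paper states this corollary without proof, treating it as an immediate consequence of Proposition \ref{PRPgenusraising}, and your argument simply makes that immediacy explicit via the (correct) translation between ``defines a dessin d'enfant on $\torus$'' and ``transitive with synthetic genus $1$''. Both directions are handled exactly as the paper's setup intends, so there is nothing to add.
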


\begin{rmk}
There are many dessins d'enfants for which no genus-raising transpositions exist. For example, Proposition \ref{PRPgenusraising} implies that if a dessin d'enfant has only one face, what Adrianov-Shabat call ``unicellular'', then it is impossible for any transposition to be genus-raising.
\end{rmk}

\begin{rmk}
Unlike the genus-lowering and genus-preserving transpositions, the genus-raising transpositions can be described using only the concept of ``orbit'', rather than the more refined concept of ``cycle''.
\end{rmk}

\subsection{Genus-Lowering Transpositions}

\begin{prp} \label{PRPgenuslowering}
Let $ g^t $ be the synthetic genus of $ ( \whiteperm^t, \blackperm ) $. \Assertions $ g^t < g $ if and only if $ t $ has the following property:
\begin{enumerate}
\item Both $ a $ and $ b $ represent the same $ \whiteperm \blackperm $-orbit

\begin{flushleft}
and 
\end{flushleft}

\item both $ \whiteperm ( a ), \whiteperm ( b ) $ represent the same $ \whiteperm \blackperm $-orbit

\begin{flushleft}
and 
\end{flushleft}

\item either the $ \whiteperm \blackperm $-arc from $ a $ to $ b $ contains neither $ \whiteperm ( a ), \whiteperm ( b ) $ or the $ \whiteperm \blackperm $-arc from $ b $ to $ a $ contains neither.
\end{enumerate}
In this case, $ g^t = g - 1 $. Tame Exceptional situations may occur here, so $ ( \whiteperm^t, \blackperm ) $ may be transitive or not even if $ ( \whiteperm, \blackperm ) $ is transitive.
\end{prp}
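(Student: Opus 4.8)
The plan is to follow the proof of Proposition~\ref{PRPgenusraising} almost verbatim, replacing ``genus-raising'' by ``genus-lowering'' and the combinatorics of orbits by that of arcs. By Proposition~\ref{PRPconjugationviaoperations}, the synthetic genus $g^t$ of $(\whiteperm^t,\blackperm)$ coincides with that of $(\whiteperm^{\opsymbol\opsymbol},\blackperm^{\opsymbol\opsymbol})$, which is the result of applying $\opsymbol$ relative to $(a,b)$ and then relative to $(b,a_{\white})$. The Reroute Theorem~\ref{THMgenus} tells us that each of these two reroutes shifts the synthetic genus by $+1$, $-1$, or $0$ according as the rerouted pair is Type U, Type N, or Type P, so $g^t-g$ is the sum of these two shifts. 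The first step is therefore purely bookkeeping against the branching tables of \S\ref{Siteration}: enumerate the branch-pairs whose two shifts sum to something negative. Since a first shift of $+1$ (Type U) cannot be beaten below $0$, and since no branch of the Type~N analysis produces a second reroute that is again Type~N, the sum is never $-2$; it equals $-1$ in exactly five situations, namely Type~N(\ref{tameexceptionalN}), Type~N(\ref{TYPEN3}), Type~P1(\ref{neverspherealwaystrans}), Type~P1(\ref{tameexceptionalP1}), and Type~P3(\ref{tameexceptionalP3}). In particular $g^t=g-1$ whenever $g^t<g$, which already gives the ``In this case'' clause.

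The second step is to show that these five situations are exactly the transpositions satisfying conditions (1)--(3), using the concrete descriptions of the five branches recorded in \S\ref{Siteration}. If $a$ and $b$ lie in different $\whiteperm\blackperm$-orbits, then condition (2) places $\whiteperm(a),\whiteperm(b)$ in a common orbit distinct from that of $a,b$, so condition (3) is automatic and the situation is precisely Type~P3(\ref{tameexceptionalP3}). If $a$ and $b$ lie in the same orbit, conditions (1)--(2) force all four of $a,\whiteperm(a),b,\whiteperm(b)$ into a single $\whiteperm\blackperm$-cycle, and within that cycle condition (3) is equivalent to the statement that $\whiteperm(a)$ and $\whiteperm(b)$ lie in the same one of the two $\whiteperm\blackperm$-arcs cut out by $a$ and $b$. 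A short case split on the position of $\whiteperm(b)$ relative to $a,\whiteperm(a),b$ then recovers the remaining four branches: $\whiteperm(a),\whiteperm(b)$ both in the arc from $a$ to $b$ gives Type~P1 with the two subcases (\ref{neverspherealwaystrans}) and (\ref{tameexceptionalP1}), while both in the arc from $b$ to $a$ gives Type~N with the two subcases (\ref{tameexceptionalN}) and (\ref{TYPEN3}). For the reverse inclusion one checks that every genus-preserving or genus-raising branch violates one of the conditions --- Type~U, Type~P2, Type~P4 all fail (1); Type~P1(\ref{TYPEP13}) and Type~N(\ref{TYPEN1}) fail (3); and the ``different orbit'' branches fail (2). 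This is the ``placing balls into boxes'' argument of Proposition~\ref{PRPgenusraising}, carried out at the finer resolution of arcs.

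I expect the case bookkeeping just described to be the only real obstacle. The branching conditions in \S\ref{Siteration} are stated asymmetrically --- in terms of which arc $\whiteperm(b)$ occupies relative to the type of the \emph{first} reroute --- whereas conditions (1)--(3) are manifestly symmetric in $a$ and $b$, so reconciling the two requires care, especially around the degenerate coincidences $\whiteperm(a)=a$, $\whiteperm(b)=b$, $\whiteperm(a)=b$ and the half-open convention in Definition~\ref{DFNarcs}. The arc formalism was built to absorb exactly these coincidences, so I anticipate no genuine surprises, only patience.

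Finally, the transitivity remark is immediate from the breakdown above: three of the five genus-lowering situations --- Type~N(\ref{tameexceptionalN}), Type~P1(\ref{tameexceptionalP1}), Type~P3(\ref{tameexceptionalP3}) --- are by definition the Tame Exceptional cases, for which Examples~\ref{EXMtypeP12nontransitive} through~\ref{EXMtypeP33torusnontransitive} display both transitive and non-transitive outcomes; the other two --- Type~N(\ref{TYPEN3}) and Type~P1(\ref{neverspherealwaystrans}) --- are non-Exceptional, so $(\whiteperm^t,\blackperm)$ is transitive there by the Transitivity Theorem~\ref{THMtransitivity}. Hence transitivity can genuinely go either way among genus-lowering $t$, even when $(\whiteperm,\blackperm)$ is transitive.
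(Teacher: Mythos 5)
Your proposal is correct and takes essentially the same route as the paper's proof: both use the branching database of \S\ref{Siteration} together with the Reroute Theorem \ref{THMgenus} (via Proposition \ref{PRPconjugationviaoperations}) to isolate exactly the five genus-lowering branches --- Type N(\ref{tameexceptionalN}), N(\ref{TYPEN3}), P1(\ref{neverspherealwaystrans}), P1(\ref{tameexceptionalP1}), P3(\ref{tameexceptionalP3}) --- conclude $g^t=g-1$ there, and then match these to conditions (1)--(3) by the same ``balls into boxes'' enumeration, with the transitivity remark handled exactly as you say. Only fix the wording of your case split: it should distinguish whether the common $\whiteperm\blackperm$-orbit of $a,b$ coincides with that of $\whiteperm(a),\whiteperm(b)$ (condition (1) already forces $a,b$ into one orbit, and in the Tame Exceptional \#2 case $a,b$ do share an orbit while the four edges occupy two cycles), after which your argument is the paper's proof.
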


\begin{proof}
According to \S\ref{Siteration} and the Reroute Theorem \ref{THMgenus}, the situations in which this can occur are precisely Type N(\ref{tameexceptionalN}), Type N(\ref{TYPEN3}), Type P1(\ref{neverspherealwaystrans}), Type P1(\ref{tameexceptionalP1}), Type P3(\ref{tameexceptionalP3}). The fact that $ g^t = g - 1 $ is then immediate from the Reroute Theorem \ref{THMgenus}. Explicitly, these five cases are described by saying that $ a, \whiteperm ( a ), b, \whiteperm ( b ) $ are distributed into $ \whiteperm \blackperm $-cycles in one of the following ways:
\begin{itemize}
\setlength{\itemsep}{5pt}

\item[-] $ ( \ldots \whiteperm ( b ) \ldots \whiteperm ( a ) \ldots a \ldots b \ldots ) $, with $ \whiteperm ( b ) \neq b $

\item[-] $ ( \ldots \whiteperm ( a ) \ldots \whiteperm ( b ) \ldots a \ldots b \ldots ) $, with $ \whiteperm ( a ) \neq b $

\item[-] $ ( \ldots \whiteperm ( b ) \ldots \whiteperm ( a ) \ldots b \ldots a \ldots ) $, with $ \whiteperm ( b ) \neq a $

\item[-] $ ( \ldots \whiteperm ( a ) \ldots \whiteperm ( b ) \ldots b \ldots a \ldots ) $, with $ \whiteperm ( a ) \neq a $

\item[-] $ ( \ldots a \ldots b \ldots ), ( \ldots \whiteperm ( a ) \ldots \whiteperm ( b ) \ldots ) $, with $ b \neq \whiteperm ( a ) $
\end{itemize}

An easy ``placing balls into boxes'' argument shows that the given property extracts precisely these five situations from among all possible.
\end{proof}

\begin{rmk}
For some dessins d'enfants, there are no genus-lowering transpositions. This is trivial in $ \sphere $, but examples exist in higher genus. For example, define $ \whiteperm = ( 1, 2, 3, 4, 5 ) $ and $ \blackperm = ( 1, 5, 3, 2, 4 ) $. This pair $ ( \whiteperm, \blackperm ) $ is transitive, and it is easy to compute that $ \whiteperm \blackperm = ( 1 ) \cdot ( 2, 5, 4 ) \cdot ( 3 ) $, so $ ( \whiteperm, \blackperm ) $ defines a dessin d'enfant with synthetic Euler characteristic $ ( 1 + 1 ) - 5 + 3 = 0 $. Since $ \whiteperm \blackperm $ contains only one non-singleton cycle, the only way that a transposition $ t = ( a, b ) $ can satisfy the first two requirements of Proposition \ref{PRPgenuslowering} is if $ a, \whiteperm ( a ), b, \whiteperm ( b ) $ all represent the same $ \whiteperm \blackperm $-orbit. But it is clear from $ \whiteperm, \whiteperm \blackperm $ that there is only one edge $ e $ for which $ e, \whiteperm ( e ) $ represent the same $ \whiteperm \blackperm $-orbit: $ e = 4 $. Therefore, no such $ a, b $ can exist.

In particular, some dessins d'enfants in $ \torus $ do not come from $ \sphere $ via conjugation by transpositions: If $ ( \whiteperm^{\prime}, \blackperm^{\prime} ) = ( \whiteperm^t, \blackperm^{\prime} ) $ with $ ( \whiteperm^{\prime}, \blackperm^{\prime} ) $ toral and $ ( \whiteperm, \blackperm^{\prime} ) $ spherical then $ t $ must be genus-lowering for $ ( \whiteperm^{\prime}, \blackperm^{\prime} ) $.
\end{rmk}

\subsection{Genus-Preserving Transpositions}

Although it is possible to describe these directly, it seems best to simply negate those properties from Propositions \ref{PRPgenusraising} and \ref{PRPgenuslowering}.

\section*{Appendix: \texttt{MAGMA}}

The following \texttt{MAGMA} functions were used to check the assertions in \S\ref{Sfundamentaloperation}, \S\ref{Siteration}, \S\ref{Sclassificationbytransitivity}. They are quite specific to the goals of this paper, except for one -- the function \texttt{MakeCycleCoercible} should be useful to anyone interested in permutations.

The functions were formatted so they can be processed by \texttt{MAGMA} without editing. In some cases, the definition of the function is preceded by a \texttt{forward} command. This command merely makes explicit that the function depends on some other function defined here.

\subsection*{Format a cycle for coercion}

The standard \texttt{MAGMA} function \texttt{Cycle} will, given a permutation and an element, return the part of the permutation's disjoint cycle decomposition containing the element. It is frequently desirable to use this cycle as a permutation. However, the object returned by \texttt{Cycle} is, from \texttt{MAGMA}'s perspective, not a permutation at all -- it is merely a sequence of positive integers.

There seems to be no easy or standard way for \texttt{MAGMA} to interpret this sequence as a permutation. For example, an error results if one attempts to coerce (typecast) the sequence into the original symmetric group. It seems that the only way to create a permutation at runtime is to coerce a sequence \texttt{S}, where \texttt{S[i]} indicates the image of \texttt{i} (``two row notation''). The following function performs the desired conversion.

\codebox{
MakeCycleCoercible := function( cycle, n )\\
tworowseq := [];\\
loopedcycle := Append( Setseq( cycle ), cycle[1] );\\
for i := 1 to n do\\
if i in loopedcycle then\\
Append( \twiddle tworowseq, loopedcycle[ Index(loopedcycle, i) + 1 ] );\\
else\\
Append( \twiddle tworowseq, i );\\
end if;\\
end for;\\
return tworowseq;\\
end function;
}

The input \texttt{cycle} is an object of type \texttt{SetIndx} (\emph{Indexed Set}), the same type of object returned by the \texttt{MAGMA} function \texttt{Cycle}. Input \texttt{n} is a positive integer at least as large as the integers in \texttt{cycle}. The returned object is a sequence of \texttt{n} positive integers, can be coerced via \texttt{Sym(n)!} or similar, and the result behaves exactly as \texttt{cycle} should. I deliberately do not coerce the returned object because the user may frequently want to further modify it (I do this myself below, in the body of \texttt{Reroute}).

\subsection*{Compute the synthetic genus of a permutation pair}

The following function implements Definition \ref{DFNsyntheticgenus} and is otherwise self-explanatory.

\codebox{
ComputeGenus := function( white, black )\\
return ( 1 - ( ( \#CycleDecomposition( white ) + \#CycleDecomposition( black ) - Degree( Parent( white ) ) + \#CycleDecomposition( black*white ) ) / 2 ) );\\
end function;
}

There are two other ways to count cycles besides the standard \texttt{MAGMA} function \texttt{CycleDecomposition}: by extracting the quantities of cycles of each length via the standard \texttt{MAGMA} function \texttt{CycleStructure}, or by counting the size of the set returned by the standard \texttt{MAGMA} function \texttt{Orbits}. It is difficult to believe that either of these alternatives is more efficient.

\subsection*{Implementation of the Reroute operation}

The following function implements Definition \ref{DFNfundamentaloperation}.

\codebox{
forward MakeCycleCoercible;\\
Reroute := function( white, black, a, b )\\
n := Degree( Parent( white ) );\\
Gext := Sym( n+1 );\\
W := Identity( Gext );\\
B := Identity( Gext );\\
cycles := CycleDecomposition( white );\\
for c in cycles do\\
if b in c then\\
cnew := MakeCycleCoercible( c, n+1 );\\
cnew[ Position( cnew, b ) ] := n+1;\\
cnew[n+1] := b;\\
W := W*( Gext ! cnew );\\
else\\
W := W*( Gext ! MakeCycleCoercible( c, n+1 ) );\\
end if;\\
end for;\\
cycles := CycleDecomposition( black );\\
for c in cycles do\\
if a in c then\\
cnew := MakeCycleCoercible( c, n+1 );\\
cnew[ Position( cnew, a ) ] := n+1;\\
cnew[n+1] := cnew[a];\\
cnew[a] := a;\\
B := B*( Gext ! cnew );\\
else\\
B := B*( Gext ! MakeCycleCoercible( c, n+1 ) );\\
end if;\\
end for;\\
return W, B;\\
end function;
}

The returned object needs to be explained. Let \texttt{1}, \texttt{2}, \ldots, \texttt{n} be the set permuted by the inputs \texttt{white} and \texttt{black}. This function \texttt{Reroute} returns a pair of permutations in the symmetric group on \texttt{1}, \texttt{2}, \ldots, \texttt{n+1}. The first one plays the role of $ \whiteperm^{\opsymbol} $ and the second one plays the role of $ \blackperm^{\opsymbol} $. In the new larger symmetric group, the ``new'' integer \texttt{n+1} plays the role of $ a_{\black} $. By abuse of notation, the ``old'' integer \texttt{a} plays the role of $ a_{\white} $.

\subsection*{Compute an arc from one element to another}

The following function implements Definition \ref{DFNarcs}.

\codebox{
ComputeArc := function( g, a, b )\\
if a eq b then\\
return [];\\
end if;\\
aorbit := Cycle( g, a );\\
arc := [];\\
for i := 2 to \#aorbit do /*a is first element of aorbit*/\\
Append( \twiddle arc, aorbit[i] );\\
if aorbit[i] eq b then\\
return arc;\\
end if;\\
end for;\\
end function;
}

The function \texttt{ComputeArc} does \emph{not} check whether \texttt{a} and \texttt{b} are in the same \texttt{g}-orbit -- this is the user's responsibility.

\subsection*{Check the Type of a permutation pair}

These functions implement Definitions \ref{DFNtypes}/\ref{DFNsubtypes} and are all self-explanatory. The implementation of each function reflects as closely as possible the actual mathematical definitions even when this is less economical than a logically equivalent method.

\codebox{
IsTypeU := function( white, black, a, b )\\
g := black*white; /*MAGMA acts on the right!*/\\
aorbit := Cycle( g, a );\\
wa := a\caret white;\\
if ( b in aorbit ) or ( wa in aorbit ) then\\
return false;\\
end if;\\
borbit := Cycle( g, b );\\
if wa in borbit then\\
return false;\\
end if;\\
return true;\\
end function;
}

\codebox{
forward ComputeArc;\\
IsTypeN := function( white, black, a, b )\\
aorbit := Cycle( black*white, a ); /*MAGMA acts on the right!*/\\
wa := a\caret white;\\
if ( b notin aorbit ) or ( wa notin aorbit ) then\\
return false;\\
end if;\\
return wa notin ComputeArc( black*white, a, b );\\
end function;
}

\codebox{
forward IsTypeU;\\
forward IsTypeN;\\
IsTypeP := function( white, black, a, b )\\
return not ( IsTypeU(white,black,a,b) or IsTypeN(white,black,a,b) );\\
end function;
}

\codebox{
forward ComputeArc;\\
IsTypeP1 := function( white, black, a, b )\\
aorbit := Cycle( black*white, a ); /*MAGMA acts on the right!*/\\
wa := a\caret white;\\
if ( b notin aorbit ) or ( wa notin aorbit ) then\\
return false;\\
end if;\\
return wa in ComputeArc( black*white, a, b );\\
end function;
}

\codebox{
IsTypeP2 := function( white, black, a, b )\\
aorbit := Cycle( black*white, a ); /*MAGMA acts on the right!*/\\
return ( ( a\caret white ) in aorbit ) and ( b notin aorbit );\\
end function;
}

\codebox{
IsTypeP3 := function( white, black, a, b )\\
aorbit := Cycle( black*white, a ); /*MAGMA acts on the right!*/\\
return ( b in aorbit ) and ( ( a\caret white ) notin aorbit );\\
end function;
}

\codebox{
IsTypeP4 := function( white, black, a, b )\\
borbit := Cycle( black*white, b ); /*MAGMA acts on the right!*/\\
return ( ( a\caret white ) in borbit ) and ( a notin borbit );\\
end function;
}

\subsection*{Check whether a permutation pair is Exceptional}

The following functions test, to varying degrees of specificity, whether a given permutation pair is Exceptional.

\codebox{
IsTameExceptional1B := function( white, black, a, b )\\
acycle := Cycle( black*white, a ); /*MAGMA acts on the right!*/\\
wa := a\caret white;\\
wb := b\caret white;\\
if (b notin acycle) or (wa notin acycle) or (wb notin acycle) then\\
return false;\\
end if;\\
x := Position( acycle, wa );\\
y := Position( acycle, wb );\\
z := Position( acycle, b );\\
return (1 lt x) and (x lt y) and (y le z); /*3rd inequality weak!*/\\
end function;
}

\codebox{
forward IsTameExceptional1B;\\
IsTameExceptional1A := function( white, black, a, b )\\
return IsTameExceptional1B( white, black, b, a );\\
end function;
}

\codebox{
IsTameExceptional2 := function( white, black, a, b )\\
g := black*white; /*MAGMA acts on the right!*/\\
wa := a\caret white;\\
wb := b\caret white;\\
aorbit := Cycle( g, a );\\
waorbit := Cycle( g, wa );\\
return (a notin waorbit) and (b in aorbit) and (wb in waorbit);\\
end function;
}

\codebox{
forward IsTameExceptional1B;\\
forward IsTameExceptional1A;\\
forward IsTameExceptional2;\\
IsTameExceptional := function( white, black, a, b )\\
return IsTameExceptional1A( white, black, a, b ) or IsTameExceptional1B( white, black, a, b ) or IsTameExceptional2( white, black, a, b );\\
end function;
}

\codebox{
forward IsTypeP2;\\
IsWildExceptional := function( white, black, a, b )\\
return IsTypeP2(white,black,a,b) and IsTypeP2(white,black,b,a);\\
end function;
}

\codebox{
forward IsTameExceptional;\\
forward IsWildExceptional;\\
IsExceptional := function( white, black, a, b )\\
return IsTameExceptional(white,black,a,b) or IsWildExceptional(white,black,b,a);\\
end function;
}

\subsection*{Check how conjugation will change genus}

The following functions implement Propositions \ref{PRPgenusraising}/\ref{PRPgenuslowering}.

\codebox{
IsGenusRaising := function( white, black, a, b )\\
g := black*white; /*MAGMA acts on the right!*/\\
aorbit := Cycle( g, a );\\
if b in aorbit then\\
return false;\\
end if;\\
wa := a\caret white;\\
wb := b\caret white;\\
borbit := Cycle( g, b );\\
if ( ( wa in aorbit ) or ( wb in aorbit ) ) and ( ( wa in borbit ) or ( wb in borbit ) ) then\\
return false;\\
end if;\\
waorbit := Cycle( g, wa );\\
if wb in waorbit then\\
return false;\\
end if;\\
wborbit := Cycle( g, wb );\\
if ( ( a in waorbit ) or ( b in waorbit ) ) and ( ( a in wborbit ) or ( b in wborbit ) ) then\\
return false;\\
end if;\\
return true;\\
end function;
}

\codebox{
forward ComputeArc;\\
IsGenusLowering := function( white, black, a, b )\\
g := black*white; /*MAGMA acts on the right!*/\\
aorbit := Cycle( g, a );\\
if b notin aorbit then\\
return false;\\
end if;\\
wa := a\caret white;\\
wb := b\caret white;\\
waorbit := Cycle( g, wa );\\
if wb notin waorbit then\\
return false;\\
end if;\\
arcab := ComputeArc( g, a, b );\\
arcba := ComputeArc( g, b, a );\\
return ( ( wa notin arcab ) and ( wb notin arcab ) ) or ( ( wa notin arcba ) and ( wb notin arcba ) );\\
end function;
}

\codebox{
forward IsGenusRaising;\\
forward IsGenusLowering;\\
IsGenusPreserving := function( white, black, a, b )\\
return not ( IsGenusRaising( white, black, a, b ) or IsGenusLowering( white, black, a, b ) );\\
end function;
}

\begin{bibdiv}
\begin{biblist}

\bib{belyi}{article}{
   author={Bely{\u{\i}}, G. V.},
   title={Galois extensions of a maximal cyclotomic field},
   language={Russian},
   journal={Izv. Akad. Nauk SSSR Ser. Mat.},
   volume={43},
   date={1979},
   number={2},
   pages={267--276, 479},
}

\bib{GG}{book}{
   author={Girondo, Ernesto},
   author={Gonz\'alez-Diez, Gabino},
   title={Introduction to compact Riemann surfaces and dessins d'enfants},
   series={London Mathematical Society Student Texts},
   volume={79},
   publisher={Cambridge University Press, Cambridge},
   date={2012},
   pages={xii+298},
   isbn={978-0-521-74022-7},
}

\bib{GT}{book}{
   author={Gross, Jonathan L.},
   author={Tucker, Thomas W.},
   title={Topological graph theory},
   note={Reprint of the 1987 original [John Wiley \& Sons] with a new preface and supplementary bibliography},
   publisher={Dover Publications, Inc., Mineola, NY},
   date={2001},
   pages={xvi+361},
   isbn={0-486-41741-7},
}

\bib{sketch}{article}{
   author={Grothendieck, Alexandre},
   title={Esquisse d'un programme},
   language={French, with French summary},
   note={With an English translation on pp. 243--283},
   conference={
      title={Geometric Galois actions, 1},
   },
   book={
      series={London Math. Soc. Lecture Note Ser.},
      volume={242},
      publisher={Cambridge Univ. Press, Cambridge},
   },
   date={1997},
   pages={5--48},
}

\bib{HR}{article}{
   author={Hoffman, Peter},
   author={Richter, Bruce},
   title={Embedding graphs in surfaces},
   journal={J. Combin. Theory Ser. B},
   volume={36},
   date={1984},
   number={1},
   pages={65--84},
}

\bib{ihara}{article}{
   author={Ihara, Yasutaka},
   title={On the embedding of ${\rm Gal}(\overline{\bf Q}/{\bf Q})$ into
   $\widehat{\rm GT}$},
   note={With an appendix: the action of the absolute Galois group on the
   moduli space of spheres with four marked points by Michel Emsalem and
   Pierre Lochak},
   conference={
      title={The Grothendieck theory of dessins d'enfants},
      address={Luminy},
      date={1993},
   },
   book={
      series={London Math. Soc. Lecture Note Ser.},
      volume={200},
      publisher={Cambridge Univ. Press, Cambridge},
   },
   date={1994},
   pages={289--321},
}

\end{biblist}
\end{bibdiv}

\end{document}